\documentclass[12pt]{amsart}
\usepackage[english]{babel}
\usepackage[letterpaper,top=2cm,bottom=2cm,left=3cm,right=3cm,marginparwidth=1.75cm]{geometry}
\usepackage{amsmath}
\usepackage{amssymb}
\usepackage{amsthm}
\usepackage{graphicx}
\usepackage{tikz}
\usetikzlibrary{positioning, arrows.meta}
\usepackage[colorlinks=true, allcolors=blue]{hyperref}
\usepackage{lmodern}
\newcommand{\supertiny}{\fontsize{5pt}{12pt}\selectfont} 

\theoremstyle{definition}
\newtheorem{theorem}{Theorem}[section]
\newtheorem{lemma}[theorem]{Lemma}
\newtheorem{proposition}[theorem]{Proposition}
\newtheorem{corollary}[theorem]{Corollary}
\newtheorem{definition}[theorem]{Definition}
\newtheorem{remark}[theorem]{Remark}
\newtheorem{example}[theorem]{Example}
\newtheorem{question}[theorem]{Question}

\DeclareMathOperator{\im}{im}
\DeclareMathOperator{\st}{st}
\DeclareMathOperator{\lk}{lk}

\title{Algebraic fiberings of right-angled Coxeter groups on bipartite cone graphs}
\author{Kasia Jankiewicz}
\author{Shayan Miri}
\address{University of British Columbia, Vancouver, BC, Canada V6T 1Z2}
\email{kasia@math.ubc.ca}
\email{smiri01@student.ubc.ca}

\begin{document}
\maketitle

\begin{abstract}
We study systems of moves and states in the sense of \cite{JankiewiczNorinWise16} on certain bipartite graphs, and the associated homomorphisms from the commutator subgroups of right-angled Coxeter groups to $\mathbb Z$. We give examples of right-angled Coxeter groups whose graphs do not satisfy the legal system criterion, but contain finite index right-angled Coxeter subgroups whose graphs admit legal systems. We also provide a family of examples of graphs without legal systems, where no sequence of the considered graph operations yields a graph with a legal system. Moreover, we show that for a bipartite graph and an orbit of non-empty states under the bipartite system of moves, the associated homomorphism from the commutator subgroup of the right-angled Coxeter group to $\mathbb Z$ has finitely generated kernel if and only if the system is legal.
\end{abstract}


\section{Introduction}

Bestvina--Brady Morse theory \cite{BestvinaBrady97} allows analyzing finiteness conditions of homomorphisms to $\mathbb Z$ given by Morse functions on affine complexes. It was applied in \cite{JankiewiczNorinWise16} to give a combinatorial criterion for virtual algebraic fibering of right-angled Coxeter groups, i.e.\ homomorphisms from their finite index subgroups to $\mathbb Z$ with finitely generated kernels. The criterion requires a \emph{legal system} on the defining graph $\Gamma$ of a right-angled Coxeter group $W_{\Gamma}$. See Section~\ref{sec: legal system} for a precise definition. A legal system on the defining graph ensures the existence of an algebraic fibering on the commutator subgroup of $W_{\Gamma}$. The criterion and its variants have been used in the construction of fibrations of various groups e.g.\ in \cite{ItalianoMartelliMigliorni215manifold, ItalianoMartelliMigliorni21dim8,  ScheslerZaremsky2023, Lafontetal2025}.
However, the criterion does not always detect whether a right-angled Coxeter group virtually algebraically fibers, as was demonstrated in \cite{JankiewiczNorinWise16}.

In this paper we propose expanding the search for algebraic fiberings to other finite index subgroups of $W_{\Gamma}$, which can still be analyzed combinatorially, and we provide examples where the strategy is successful.

Specifically, we focus on a class of bipartite graphs,  referred to as \emph{bipartite cones}, that were considered in \cite{JankiewiczNorinWise16} and were shown to not admit legal systems for appropriate values of $n,k$.
\begin{definition}[Bipartite cone]
For integers $1 \leq k \leq n$, the \emph{bipartite cone graph} $\Lambda(n,k)$
is the graph with the vertex set consisting of two types of vertices:
$$
1, 2, \ldots, n \qquad \text{(\emph{the number vertices})},$$
$$
 \sigma \subseteq \{1,\ldots,n\} \ :\ |\sigma| = k  \qquad \text{(\emph{the $k$-subset vertices})},$$
and the edge set: 
$$\bigl\{\, (i, \sigma) \ :\ i \text{ is a number},\ \sigma \text{ is a $k$-subset, and}\ i \in \sigma \,\bigr\}.$$
\end{definition}

 We consider three operations on graphs, which yield new right-angled Coxeter groups which naturally embed as finite index subgroups of the original group: vertex link doubling, simultaneous vertex link multiplying, and edge quadrupling. See Section~\ref{sec: graph operations} for precise definitions. We obtain the following classification of which bipartite cones admit legal systems, after possibly applying the graph operations.

\begin{theorem}\label{thm: main}
    Let $\Lambda = \Lambda(n,k)$ be a bipartite cone. Let $k\geq 3$, and $n>k$.
    \begin{enumerate}
        \item\label{item1} If $n<2k-1$, then $\Lambda$ has a legal system. Consequently, $W_\Lambda$ virtually algebraically fibers.
        \item\label{item2} If $n=2k-1$, then $\Lambda$ does not have a legal system, but there exists a graph $\Gamma$ with a legal system, whose associated right-angled Coxeter group embeds as a finite index subgroup of $W_{\Lambda}$. Consequently, $W_\Lambda$ virtually algebraically fibers.
        \item\label{item3} If $n>2k-1$, then $\Lambda$ does not have a legal system, and no finite sequence of considered graph operations yields a graph $\Gamma$ with a legal system. Moreover for any such $\Gamma$ and any orbit of non-empty states under the bipartite system of moves on $\Gamma$, the induced homomorphism $W_{\Gamma}'\to \mathbb Z$ has infinitely generated kernel.
    \end{enumerate}
\end{theorem}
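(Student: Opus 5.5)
Recall the setup from \cite{JankiewiczNorinWise16}. A \emph{state} is a subset of $V(\Gamma)$. A \emph{move} at a vertex $v$ toggles a subset of $\st(v)$ (or $\lk(v)$) containing $v$. A system is \emph{legal} if for every state in the orbit, the set of vertices in the state and the set of vertices outside it both induce subcomplexes of the flag complex with connected, nonempty ascending and descending links. For the bipartite system, the move at $v$ flips $v$ together with its neighbours. By the Bestvina--Brady criterion, legality holds precisely when the full subgraphs $\Gamma_S$ and $\Gamma_{V\setminus S}$ are connected and nonempty for every state $S$ in the orbit. The plan is to reduce each item to a combinatorial statement about which subsets of $V(\Lambda(n,k))$ lie in an orbit and whether they induce connected subgraphs.

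For item~(\ref{item1}), I would exhibit an explicit orbit, for instance the orbit of the state consisting of all number vertices. A move at a number $i$ toggles $i$ and all $k$-subsets containing $i$. A move at $\sigma$ toggles $\sigma$ and the numbers in $\sigma$. Composing moves, I would describe each state in the orbit by a pair $(A,\mathcal F)$, where $A\subseteq[n]$ is the set of numbers in the state and $\mathcal F$ is the set of $k$-subsets in it; the parity of $|\sigma\cap B|$ relative to the moves performed at numbers governs membership of $\sigma$. Connectivity of $\Lambda_S$ then reduces to showing that any two chosen numbers are joined through a chosen $k$-subset, and that every chosen $k$-subset meets $A$. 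When $n<2k-1$, any two $k$-subsets intersect in at least $2k-n\ge 2$ elements. This pigeonhole overlap is exactly what lets one find, for any $S$ and any pair of vertices on the same side, a common neighbour of the right type. The same argument applies to the complement.

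For item~(\ref{item2}), non-legality follows from a single state. When $n=2k-1$, there are complementary sets of sizes $k$ and $k-1$. Choosing moves so that $S$ is disconnected (a $k$-subset $\sigma$ isolated from $A=[n]\setminus\sigma$, which has size $k-1$ and so contains no $k$-subset) shows that every orbit meets an illegal state. I would then apply vertex link doubling at a suitable number vertex, or edge quadrupling, to get $\Gamma$. Doubling adds copies of vertices, which destroys the tight complementary pairs. The argument for item~(\ref{item1}) then goes through on $\Gamma$, because each number now has a twin and sets of size $k-1$ become effectively of size $\geq k$ in the relevant count. Finally, the embedding of $W_\Gamma$ as a finite-index subgroup is the property of the operations stated in Section~\ref{sec: graph operations}.

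For item~(\ref{item3}), the key is an invariant preserved by the three operations. Each operation replaces a vertex or edge with copies whose links are copies of the old links, so states on $\Gamma$ project to states on $\Lambda$ in a way that respects moves. I would show that in any orbit of non-empty states on $\Gamma$ there is a state whose number part and subset part split along two disjoint $k$-subsets of $[n]$. Such a split exists since $n\ge 2k$, and it makes $\Gamma_S$ or its complement disconnected, so no legal system exists. For the last claim, I would use the converse direction of Bestvina--Brady: if some state's ascending or descending link is disconnected (not merely failing to be simply connected), then the kernel is not finitely generated. The main obstacle is showing that the disjoint-subsets obstruction lifts through arbitrary iterated operations. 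I would attempt this by induction on the number of operations, tracking a ``projection'' map $\Gamma\to\Lambda$ that sends copies to originals and showing that orbits project onto unions of orbits.
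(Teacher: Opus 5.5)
Your notion of a move is backwards, and item (1) fails at the first step. A move $M_v$ must contain $v$ and \emph{no} neighbour of $v$; this is what makes the map factor through a diagonal on each square. So the bipartite move at $v\in L$ is all of $L$, not $\st(v)$. With the correct moves the orbit of $S=L$ is $\{L,\emptyset,V,R\}$. In any case $L$ is an independent set of $n\ge 4$ vertices, so $\Gamma_L$ is already disconnected. What item (1) needs is:
\begin{itemize}
\item a split $L=L_0\sqcup L_1$ with no $k$-subset inside $L_0$ or inside $L_1$ (odd/even works, since $\lceil n/2\rceil<k$ when $n\le 2k-2$);
\item a choice of $R_0,R_1$ keeping the number vertices connected in all four states $L_i\cup R_j$. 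The paper takes $R_0$ to be the $k$-subsets containing $1$ other than $\tau=\{1,n,\dots,n-k+2\}$, and puts $\tau$ into $R_1$ so that $1$ is not isolated in $L_0\cup R_1$.
\end{itemize}
The bound $|\sigma\cap\sigma'|\ge 2k-n$ plays no role.

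In item (2), non-legality must be shown for \emph{every} system of moves, and ``choosing moves so that $S$ is disconnected'' is not an argument. The paper notes that for $n\ge 2k-1$ every state has a problematic $k$-subset vertex, since some $k$ numbers lie on one side. Lemma~\ref{lem: problematic vertex obstruction} converts this into non-legality precisely because $M_v\cap N(v)=\emptyset$: $v$ switches sides while its neighbours do not. The positive half of (2) is missing. The link double of $\Lambda(2k-1,k)$ along a number is a five-column graph rather than a bipartite cone, and your heuristic about twins does not produce an orbit. The paper writes one down explicitly, using:
\begin{itemize}
\item the crossed split $L_0=A^0\cup B^1$, $L_1=A^1\cup B^0$ with $A=\{2,\dots,k\}$, $B=\{k+1,\dots,2k-1\}$;
\item special vertices $\tau^i,\rho^i\in R_0$;
\item a split of the middle column obtained by comparing $|\sigma\cap A|$ with $|\sigma\cap B|$;
\item a separate check for $k=3$.
\end{itemize}

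For item (3), the hypothesis of Lemma~\ref{lem: problematic vertex obstruction} concerns \emph{every} state and is independent of the moves, so looking for one bad state per orbit is the wrong quantifier. Pushing orbits down to $\Lambda$ cannot work either. Systems of moves on $\Gamma$ are arbitrary, and a state of $\Gamma$ does not even project to a state of $\Lambda$, since two copies of a vertex may lie on opposite sides. Moreover, whatever you carry through the induction must fail at $n=2k-1$. ``Every state has a problematic vertex'' holds for $\Lambda(2k-1,k)$ but is destroyed by one link double, by item (2).

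The missing idea is the stronger property of dispersed problematic $R$-vertices (Definition~\ref{def:H}): for every state, no vertex is adjacent to all problematic $R$-vertices. This is where $n\ge 2k$ enters, and it is what your ``two disjoint $k$-subsets'' is gesturing at: either $L_0,L_1$ are disjoint $k$-sets, or one side has at least $k+1$ elements and its $k$-subsets have empty common intersection. The property survives each operation. Dispersion provides, in each piece $\Gamma^i$, a problematic $R$-vertex distinct from and non-adjacent to the deleted vertices, and a short case analysis shows that no vertex of the new graph is adjacent to all of these.

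Finally, the ``converse of Bestvina--Brady'' you invoke is false for general diagonal maps. The $K_{2,3}$ example at the end of Section~\ref{sec: infinite generations} has an orbit containing $\{b_2,b_3\}$, whose span is disconnected, yet $\ker\chi$ is finitely generated. The paper proves the converse only for the bipartite system. It computes $\dim_{\mathbb Q(t)}H^1_{\overline\chi}(\overline X_\Gamma;\mathbb Q(t))=\sum_{i,j}\tilde b_0(\Gamma_{ij})$ on the four-vertex quotient $\overline X_\Gamma$, and shows that a finitely generated kernel forces this twisted cohomology to vanish (Theorem~\ref{thm: iff for finite genration}). Combined with the absence of legal systems, this gives the last assertion of (3).
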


The infinite generation of the kernels $\ker(W_{\Gamma}'\to \mathbb Z)$ in Theorem~\ref{thm: main}\eqref{item3} follows from a more general statement which holds for every bipartite graph, and can be of independent interest. 

\begin{theorem}\label{thm: main2}
    Let $\Gamma$ be a bipartite graph with the vertex set partition $V(\Gamma) = L\sqcup R$. Let $\mathcal M = \{L, R\}$ be the bipartite system of moves, and let $\mathcal S$ be an orbit of non-empty states under $\mathcal M$. Let $\chi: W_{\Gamma}'\to \mathbb Z$ be the associated homomorphism. Then $\ker \chi$ is finitely generated if and only if $\mathcal S$ is a legal orbit. 
\end{theorem}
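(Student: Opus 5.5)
The plan is to prove both directions using the Bestvina--Brady Morse function on the Davis complex, as in \cite{JankiewiczNorinWise16}. Recall the setup there. $W_\Gamma'$ acts freely and cocompactly on the Davis cube complex $X_\Gamma$. The orbit $\mathcal S$ of states under $\mathcal M=\{L,R\}$ determines a $W_\Gamma'$-equivariant affine Morse function $f\colon X_\Gamma\to\mathbb R$ whose induced homomorphism is $\chi$. At a vertex $x$ carrying the state $S\in\mathcal S$, the ascending link of $x$ is the flag complex on the induced subgraph $\Gamma[S]$, and the descending link is the flag complex on $\Gamma[V(\Gamma)\setminus S]$, possibly with the roles interchanged. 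Since $\Gamma$ is bipartite, these flag complexes are just the graphs $\Gamma[S]$ and $\Gamma[V\setminus S]$.

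\textbf{The ``if'' direction.} If $\mathcal S$ is legal, then every ascending and descending link is non-empty and connected. Bestvina--Brady Morse theory \cite{BestvinaBrady97} then gives that every level set $f^{-1}(t)$ is connected. Since $X_\Gamma$ is CAT(0) and $W_\Gamma'$ acts cocompactly, it follows that $\ker\chi$ is finitely generated. This is exactly the argument of \cite{JankiewiczNorinWise16}, and I would simply invoke it.

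\textbf{The ``only if'' direction.} Suppose $\mathcal S$ is not legal. I would first use the special structure of the bipartite system to spread the defect to every height.
\begin{enumerate}
\item The orbit of $S$ under $\mathcal M=\{L,R\}$ is $\{S,\ S\triangle L,\ S\triangle R,\ S\triangle V\}$. For the orbit of a non-empty state, a failure of legality means that some $\Gamma[T]$ is empty or disconnected, where $T$ or its complement lies in this orbit.
\item Complementation $T\mapsto V\setminus T$ preserves the orbit and exchanges ascending with descending links. So the defect can be assumed to occur in an ascending link, and then by symmetry it also occurs in a descending link.
\item Equivariance under $W_\Gamma'$, together with translation by elements of $W_\Gamma$ normalizing $W_\Gamma'$ and acting on heights by a shift, shows that vertices with bad ascending links, and vertices with bad descending links, occur at a cobounded set of heights in every region of $X_\Gamma$.
\end{enumerate}

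I would then use Brown's criterion, in its Bestvina--Brady form: $\ker\chi$ is finitely generated if and only if the directed system $\{\tilde H_0(f^{-1}[s,t])\}$ is essentially trivial. Equivalently, for every $t$ there is an $s$ such that any two points of $f^{-1}(t)$ are joined by a path in $f^{-1}[t-s,t+s]$. Consider a vertex $x$ with disconnected (or empty) ascending link, at height $t_0$. The two components $A,B$ of $\Gamma[S]$ give two points just above $x$ in the level set $f^{-1}(t_0+\epsilon)$.

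The key claim is that these two points cannot be joined inside any band $f^{-1}[t_0-s,t_0+s]$. My first attempt would be to argue via the hyperplane (wall) structure of $X_\Gamma$. Going up from $x$ through $A$ versus through $B$ means crossing hyperplanes labelled by vertices of $A$ versus vertices of $B$. Since no vertex of $A$ is adjacent to a vertex of $B$, the corresponding reflections do not commute. Because the moves are exactly $L$ and $R$, the Morse function along a path records which side of $L$- and $R$-hyperplanes it is on. So a path joining the two points within a bounded band must cross back through a hyperplane dual to an edge at $x$. Such a path would then have to pass below $x$, or through $x$ itself. Iterating this with the descending-link defects from step 3 would force the path to leave every band.

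An alternative to this geometric argument would be a BNS-invariant computation: show that $\chi$ or $-\chi$ fails the connectivity of the Cayley-graph half-space, using a separating set in the link.

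\textbf{Expected obstacle.} The main obstacle is this last step: ruling out that components which are disconnected locally at $x$ merge far away in the level set. Bestvina--Brady's converse is known for right-angled Artin groups, but not in general. So the argument must genuinely use the bipartiteness of $\Gamma$ and the fact that $\mathcal M=\{L,R\}$. I would try to establish it by building an explicit infinite family of pairwise non-conjugate vertices with bad links at the same height, each of which contributes an independent class in $\tilde H_0$ that survives in every bounded band.
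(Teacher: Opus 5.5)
The ``if'' direction is fine, since it is Theorem~\ref{thm: JNW}. The ``only if'' direction has a genuine gap. Your key claim is that the two points of $f^{-1}(t_0+\epsilon)$ just above a vertex $x$ with disconnected ascending link cannot be joined in any band $f^{-1}[t_0-s,t_0+s]$. That claim is essentially the whole content of the theorem, and you leave it as an ``expected obstacle.''

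The hyperplane sketch does not prove it. The two points lie on edges at $x$ labelled $a\in A$ and $b\in B$, within distance $\epsilon$ of $x$, so no hyperplane separates them. Nothing forces a connecting path to ``cross back'' through a hyperplane at $x$. A path may dip a bounded amount below $t_0$ and reconnect far from $x$, and ``iterating with descending-link defects'' gives no mechanism pushing it out of every band.

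In fact no argument using only the local defect can work. In Example~\ref{example: non legal, finite generation} (the bipartite graph $K_{2,3}$ with a non-bipartite system of moves), some vertices have disconnected ascending link $\{b_2,b_3\}$, yet $\ker\chi$ is finitely generated. So there the two local components do merge inside a bounded band. A proof has to exploit the system $\{L,R\}$ globally, and your proposal does not say how.

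A minor point on step 3: elements of $W_\Gamma$ outside $K_\Gamma$ do not act on $\widetilde f$ by height shifts, because they change which state of the orbit sits at a given vertex. However, $W_\Gamma'$-equivariance alone gives what you need there.

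The missing idea, as in the paper, is to replace level-set geometry by an algebraic count.
\begin{enumerate}
\item By Remark~\ref{rem: 4vertex complex}, $\chi$ extends to $\overline\chi$ on $\pi_1\overline X_\Gamma=K_\Gamma$, and $\ker\chi$ has finite index in $\ker\overline\chi$.
\item If $\ker\overline\chi$ is finitely generated, then $H_1$ of the infinite cyclic cover is finite-dimensional over $\mathbb Q$, hence $\mathbb Q[t^{\pm1}]$-torsion. So $H_1^{\overline\chi}(\overline X_\Gamma;\mathbb Q(t))=0$ by Proposition~\ref{prop: cocycle criterion}.
\item On the $4$-vertex complex, substitute $\alpha_{\ell,0}=a_{\ell,0}-ta_{\ell,1}$ and $\alpha_{\ell,1}=a_{\ell,0}-t^{-1}a_{\ell,1}$, and likewise for the $R$-edges. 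The cocycle equations become $\alpha_{\ell,j}=\beta_{i,r}$ for each edge $(\ell,r)$ with $\ell\in L_i$ and $r\in R_j$.
\item Hence $1$-cocycles are exactly the locally constant functions on the four graphs $\Gamma_{ij}$. Also $\delta^0$ is injective because the states are non-empty.
\item Therefore $\dim_{\mathbb Q(t)}H^1=\sum_{i,j}\tilde b_0(\Gamma_{ij})$ by Lemma~\ref{lem: twisted homology bipartite}, which contradicts step 2 unless every $\Gamma_{ij}$ is connected.
\end{enumerate}
This is the precise form of your closing heuristic that each extra component contributes an independent class: it gives $H_1(\ker\overline\chi;\mathbb Q)$ positive $\mathbb Q[t^{\pm1}]$-rank, hence infinite $\mathbb Q$-dimension. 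It is also exactly where the bipartite moves are used. They make the map factor through a complex with only four vertices, and the twisting exponents $\sigma_u$ then decouple into the four state graphs.
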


\subsection*{Organization}
In Section~\ref{sec: preliminaries} we discuss right-angled Coxeter groups and the combinatorial game on the defining graph which ensures the existence of virtual algebraic fiberings of right-angled Coxeter groups. We also review some graph operations that yield commensurable right-angled Coxeter groups, and show that these operations preserve bipartiteness. Section~\ref{sec: legal systems} contains the proofs of Theorem~\ref{thm: main}\eqref{item1} and Theorem~\ref{thm: main}\eqref{item2}. Section~\ref{sec: obstructions} focuses on obstructions to the existence of legal systems, and we show that they are present in every graph obtained from $\Lambda(n,k)$ with $n>2k-1$ by a sequence of the graph operations defined in Section 2, proving Theorem~\ref{thm: main}\eqref{item3}.  
In Section~\ref{sec: infinite generations} we prove Theorem~\ref{thm: main2}.

\subsection*{Acknowledgements} The first author was partially supported by the Natural Sciences and Engineering Research Council of Canada (NSERC). The second author was partially supported by the University of British Columbia (UBC) Work Learn International Undergraduate Research Award (WLIURA). The first author thanks Kevin Schreve, Grigori Avramidi, and Annette Karrer for useful conversations.

Some AI tools were used to prove results presented in Section~\ref{sec: infinite generations}. In other parts, they were only used for making figures and light edits to the text. The article was entirely written by the authors, who are fully responsible for its correctness.

\section{Preliminaries}\label{sec: preliminaries}
Throughout this paper $\Gamma = (V,E)$ is always a simplicial graph.
\subsection{Right-angled Coxeter groups}

The \emph{right-angled Coxeter group} associated to $\Gamma$ is
$$W_\Gamma = \bigl\langle\, v \in V(\Gamma) \ \big|\ v^2 = 1,\ \ (vw)^2 = 1 \text{ whenever } (v,w) \in E(\Gamma) \,\bigr\rangle.$$

\subsubsection{The complex $X_{\Gamma}$}\label{sec: X_Gamma}
Let $W_\Gamma' = [W_\Gamma, W_\Gamma]$ be the commutator subgroup, i.e.\ $W_{\Gamma'}$ is the kernel of the abelianization homomorphism $W_{\Gamma}\to \mathbb Z/2\mathbb Z^{|V(\Gamma)|}$. Let $X_{\Gamma} = \Sigma_{\Gamma}/W_{\Gamma}'$ be the quotient of the Davis complex $\Sigma_{\Gamma}$ of $W_{\Gamma}$ by the action of $W_{\Gamma}'$ \cite{DavisCoxeterBook2008}. 

More explicitly, the complex $X_{\Gamma}$ is a non-positively curved cube complex whose $0$-cells are in 1-to-1 correspondence with $\{0,1\}^{V(\Gamma)}$, the $1$-cells correspond to the pairs of vertices differing on a single coordinate, and each edge can be naturally labeled by a vertex of $\Gamma$. In particular, the $1$-skeleton of $X_{\Gamma}$ is the $1$-skeleton of the $|V(\Gamma)|$-cube.
Each family of parallel $1$-cells of $X_{\Gamma}$ is labeled by $u \in V(\Gamma)$, and the $2$-cells of $X_{\Gamma}$ are spanned by two pairs of parallel $1$-cells whose labels are adjacent in $\Gamma$. The higher dimensional cubes are defined analogously and correspond to bigger cliques in $\Gamma$. In this text, we consider bipartite graphs, and so their associated complexes are $2$-dimensional.

\subsection{Legal systems and states}\label{sec: legal system}

A \emph{diagonal map} on a cube complex $X$ is a map $f: X\to S^1$ which restricted to each $1$-cell is a degree $1$ map to $S^1$, and restricted to each $k$-cube it factors through a projection to one of the big diagonals of the cube. 
A diagonal map $f: X\to S^1$ lifts to a Morse function $\widetilde f: \widetilde X \to \widetilde S^1 = \mathbb R$, in the sense of Bestvina-Brady \cite{BestvinaBrady97}.

\subsubsection{System of moves and states}
\label{sec: states and moves}

Let $\Gamma = \Gamma(V, E)$ be a simplicial graph. A \emph{state} of $\Gamma$ is a subset $S \subseteq V$. The following notions were defined in \cite{JankiewiczNorinWise16}. A state $S$ is \emph{legal} if the subgraphs induced by $S$ and $S^c :=V - S$ are both nonempty and connected. 

A \emph{move} at $v \in V$ is a subset $M_v \subseteq V$ such that $v \in M_v$ and $u \notin M_v$ for all $\{u,v\} \in E$. A \emph{system of moves} is a choice of move $M_v$ for each $v \in V$. 
The system of moves $\mathcal M = \{M_v:v\in V(\Gamma)\}$ \emph{acts} on the set of all states of $\Gamma$ by a symmetric difference:
$$M_v\cdot S := S\Delta M_v.$$
This action factors through an action of $(\mathbb Z/2)^{|\mathcal M|}$. 
The system is \emph{legal} if there exists an orbit of states under the action of the moves all of whose elements are legal states.

A system of moves $\{M_v\subseteq V(\Gamma) : v\in V(\Gamma)\}$ and a single state $S_0\subseteq V(\Gamma)$ determine a diagonal map $f:X_{\Gamma}\to S^1$ as follows. 
Consider a cellular structure on $S^1$ with a single $0$-cell $o$ and one oriented $1$-cell $e$. Fix $x_0\in X_\Gamma^0$. Then each $x\in X^0$ can be obtained by traveling from $x_0$ by traversing one $1$-cell of each label in some set of labels $\{u_1, \dots, u_k\}\subseteq V(\Gamma)$ (independent of the order of traversing edges). 
The map $f:X\to S^1$ maps each $0$-cell to $o$, and maps a $1$-cell starting at $x$ with label $u$ onto $e$ with positive orientation if $u\in S(x):=\prod_{i=1}^k M_{u_i}\cdot S_0$, and onto $e$ with negative orientation otherwise. Note that the order of $M_{v_i}$ in the above expression does not play a role, and therefore $f$ is well-defined.
The following lemma guarantees that the induced homomorphism $\chi  = f_*:W_{\Gamma}'\to \mathbb Z$ is nontrivial.
\begin{lemma}\label{lem: nontrivial homomorphism}
    If for every $x\in X_\Gamma^0$, $S(x)$ is a proper subset of $V(\Gamma)$, then $\chi:W_{\Gamma}'\to \mathbb Z$ is non-trivial.
\end{lemma}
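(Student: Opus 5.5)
To prove the lemma it suffices to exhibit a single closed loop in $X_\Gamma$ whose image under $f$ has nonzero degree. Since $X_\Gamma = \Sigma_\Gamma/W_\Gamma'$ and $\Sigma_\Gamma$ is simply connected, we have $\pi_1(X_\Gamma, x_0)\cong W_\Gamma'$. Thus $\chi(g)$ is the signed count of edges (positive orientations minus negative orientations) along the edge loop at $x_0$ representing $g$. We will use the simplest loops available, namely those that traverse two parallel edges with the same label.

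Fix a vertex $x\in X_\Gamma^0$. By hypothesis $S(x)$ is a proper subset of $V(\Gamma)$, so we may choose $u\notin S(x)$. Let $y$ be the endpoint of the $u$-edge starting at $x$. Then $S(y)=M_u\cdot S(x)=S(x)\,\Delta\, M_u$, and since $u\in M_u$ we get $u\in S(y)$. Now consider the edge loop $\gamma$ which goes from $x$ to $y$ along the $u$-edge and returns along the same edge. That is backtracking and contributes zero, so it is useless. Instead we use the orientation convention: $f$ sends "a $1$-cell starting at $x$ with label $u$" to $e$ positively if $u\in S(x)$, and the same geometric edge read from $y$ has the opposite orientation. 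Consistency therefore forces $u\notin S(x)$ iff $u\in S(y)$, which matches the computation above and shows $f$ is well defined, but it still yields no nontrivial loop.

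A genuine loop needs two distinct labels. Take $u,v$ non-adjacent; such a pair exists, since otherwise $\Gamma$ is a clique, and in the bipartite setting this means at most one edge. The word $(uv)^2$ lies in $W_\Gamma'$ and is nontrivial. Its loop starts at $x$, traverses a $u$-edge, then a $v$-edge, then $u$, then $v$, visiting $x$, $x^u$, $x^{uv}$, $x^v$ and returning to $x$. The four edges have labels $u,v,u,v$, and each is traversed in the direction "from" a vertex at which we compute the sign. The contributions are
\[
\epsilon_u(x)+\epsilon_v(x^u)+\epsilon_u(x^{uv})+\epsilon_v(x^v),
\]
where the last two edges are traversed backwards relative to the standard direction, so their signs are negated. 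Writing out membership using $S(x^u)=S(x)\Delta M_u$ and related identities, $v\notin M_u$ fails in general since $u,v$ are non-adjacent. The degree then becomes a combination of the indicators $[v\in M_u]$ and $[u\in M_v]$. I expect this bookkeeping to be the main obstacle: one must choose $x$, $u$ and $v$ so that the degree is nonzero, and this is exactly where the properness of $S(x)$ has to be used.

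A cleaner alternative for that last step is to argue by contradiction. If $\chi$ were trivial, $f$ would lift to a map $X_\Gamma\to\mathbb R$. Along any edge path from $x_0$, the lift increases by $1$ on positive edges, and it must be bounded on the finite complex. Start at $x_0$ and repeatedly traverse, from the current vertex $z$, an edge labelled $w$ with $w\in S(z)$. Each such step increases the lift by one, and a step is always possible provided $S(z)\neq\emptyset$. So we need nonemptiness, which comes from the hypothesis applied at the neighbouring vertex: if $S(z)=\emptyset$, then $S(z^w)\supseteq M_w\ni w$. An unbounded increasing walk in a finite complex contradicts the existence of the real-valued lift, hence $\chi\neq 0$. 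Note that this version seems to need $S(z)\neq\emptyset$ for all $z$ (via complements, $u\notin S(x)$ means the reversed edge is positive from $x^u$). I would check this carefully and would likely derive the required nonemptiness from properness, using the symmetry that replacing $S_0$ by its complement reverses all orientations.
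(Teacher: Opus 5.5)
Your final argument, the contradiction via a real-valued lift of $f$, is the same idea as the paper's proof. An infinite edge-walk along which the lift changes by one in the same direction at every step cannot coexist with a bounded lift. The paper lifts to $\widetilde f\colon\Sigma_\Gamma\to\mathbb R$ and gets boundedness from $\chi$-equivariance and cocompactness. Your lift $X_\Gamma\to\mathbb R$, obtained from the lifting criterion on the compact complex $X_\Gamma$, works equally well. Since $X_\Gamma^0$ is finite, a monotone walk must also revisit a vertex, which gives an explicit loop with nonzero $\chi$-value. The $(uv)^2$ computation in your second paragraph is unfinished and unnecessary, so drop it. The lemma is also not restricted to bipartite $\Gamma$.

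The gap is the direction of the walk. You step along labels $w\in S(z)$, which requires $S(z)\neq\emptyset$ at every vertex reached, and the hypothesis does not give this. Your justification, that $S(z)=\emptyset$ forces $w\in S(z^w)$, concerns the neighbouring vertex and says nothing about $z$.

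Nonemptiness genuinely does not follow from properness. Take the edgeless graph on $\{a,b,c\}$ with $M_a=\{a,b\}$, $M_b=\{b,c\}$, $M_c=\{a,c\}$ and $S_0=\emptyset$. Every state in the orbit $\{\emptyset,\{a,b\},\{b,c\},\{a,c\}\}$ is proper, yet $S(x_0)=\emptyset$, so your walk cannot even leave $x_0$.

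The repair is to walk downward. Properness gives some $w\notin S(z)$ at every $z$, and the $w$-edge out of $z$ is mapped with negative orientation. The lift therefore drops by $1$ at each step, and this is exactly the paper's proof.

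Your complement idea also works once stated precisely. Replacing $S_0$ by $V-S_0$ replaces every $S(x)$ by $V-S(x)$, because $(V-A)\,\Delta\, M=V-(A\,\Delta\, M)$. This reverses every edge orientation, so $\chi$ becomes $-\chi$, and properness of the old states becomes nonemptiness of the new ones. That is just the downward walk in disguise.
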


\begin{proof}
The map $f\colon X_\Gamma\to S^1$ lifts to $\widetilde f\colon \widetilde X_\Gamma=\Sigma_\Gamma\to\mathbb R$,
which is $\chi$-equivariant, i.e.\ $\widetilde f(g\cdot y)=\widetilde f(y)+\chi(g)$ for all
$g\in W_\Gamma'$ and $y\in\Sigma_\Gamma$. If $\chi$ were trivial, $\widetilde f$ would be
$W_\Gamma'$-invariant, and hence bounded, since $W_\Gamma'$ acts cocompactly on $\Sigma_\Gamma$.
So it suffices to show that $\widetilde f$ is unbounded.

Let $\widetilde x$ be any $0$-cell of $\Sigma_\Gamma$, with image $x\in X_\Gamma$. Since $S(x)$ is a
proper subset of $V(\Gamma)$, there is $w\notin S(x)$, and the edge at $\widetilde x$ labeled by
$w$ is mapped by $\widetilde f$ onto $[\widetilde f(\widetilde x)-1,\widetilde f(\widetilde x)]$.
Its other endpoint $\widetilde x'$ thus satisfies $\widetilde f(\widetilde x')=\widetilde f(\widetilde x)-1$.
Applying the same argument to $\widetilde x'$ and iterating, we obtain $0$-cells on which
$\widetilde f$ takes the values $\widetilde f(\widetilde x)-k$ for all $k\geq 0$. Hence
$\widetilde f$ is unbounded.
\end{proof}

An \emph{algebraic fibering} of a group $G$ is a non-trivial homomorphism $\chi: G\to \mathbb Z$ with finitely generated $\ker \chi$. The following theorem is a consequence of the Bestvina-Brady Morse theory \cite{BestvinaBrady97}.
\begin{theorem}[{\cite[Thm 4.3]{JankiewiczNorinWise16}}]\label{thm: JNW}
    Suppose that $\Gamma$ admits a legal system. Then the associated map $f:X_{\Gamma}\to S^1$ induces an algebraic fibering of $W_{\Gamma}'$. In particular, $W_{\Gamma}$ virtually algebraically fibers. 
\end{theorem}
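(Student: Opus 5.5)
The plan is to apply Bestvina--Brady Morse theory to the lift $\widetilde f\colon \Sigma_\Gamma\to\mathbb R$ of the diagonal map $f$ attached to a legal system $\mathcal M$ and a legal orbit $\mathcal S$. Since $W_\Gamma'$ acts freely and cocompactly on the CAT(0) cube complex $\Sigma_\Gamma$, it suffices to check two things. First, $\chi=f_*$ is nontrivial. Second, every ascending and descending link of $\widetilde f$ is connected and nonempty. The Bestvina--Brady criterion then gives that $\ker\chi$ is finitely generated, and in fact of type $F_1$, which is all we need.

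For nontriviality I would invoke Lemma~\ref{lem: nontrivial homomorphism}. Every state in a legal orbit has nonempty complement, so each $S(x)$ is a proper subset of $V(\Gamma)$. I also have to check that the states $S(x)$, for $x\in X_\Gamma^0$, are exactly the states in the orbit $\mathcal S$ of $S_0$. This holds by construction: $S(x)=\prod M_{u_i}\cdot S_0$, and every product of moves is realized by walking from $x_0$ along edges with the corresponding labels.

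The main step is the link computation at a $0$-cell $\widetilde x$ over $x$.
\begin{itemize}
\item The link of $\widetilde x$ in $\Sigma_\Gamma$ is the flag complex of $\Gamma$, with one vertex for each edge at $\widetilde x$.
\item The edge labeled $u$ at $\widetilde x$ goes up exactly when $u\in S(x)$.
\item Since $f$ is diagonal, a cube at $\widetilde x$ spanned by a clique $\sigma$ is ascending (resp.\ descending) exactly when all its edges at $\widetilde x$ are ascending (resp.\ descending).
\end{itemize}
The delicate point is that the direction of an edge is measured at its own initial vertex. For the square labeled $u,v$ with $u\sim v$, I would check that the two edges labeled $u$ and $v$ at $\widetilde x$ orient consistently with the diagonal. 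This uses the defining property of moves: $v\notin M_u$ when $u\sim v$, so traversing $u$ does not change whether $v$ lies in the state. The same argument handles higher cubes.

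Granting this, the ascending link is the flag complex on the induced subgraph $\Gamma[S(x)]$, and the descending link is the flag complex on $\Gamma[S(x)^c]$. Both are nonempty and connected exactly because $S(x)$ is legal. Bestvina--Brady then gives finite generation of $\ker\chi$, so $\chi$ is an algebraic fibering of $W_\Gamma'$.

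Finally, $W_\Gamma'$ is the kernel of $W_\Gamma\to(\mathbb Z/2)^{|V|}$, hence has finite index in $W_\Gamma$. Therefore $W_\Gamma$ virtually algebraically fibers. I expect the main obstacle to be the bookkeeping in the link identification, namely verifying that the diagonal condition on squares is compatible with the move rule. The rest follows from the cited criterion.
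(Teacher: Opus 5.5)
Your proposal is correct and follows the approach behind the cited result. The paper does not reprove this theorem; it attributes it to Bestvina--Brady Morse theory via \cite{JankiewiczNorinWise16}, and you reproduce that standard argument: you use $v\notin M_u$ for $u\sim v$ to see that $\widetilde f$ is affine on cubes, identify the ascending and descending links at $\widetilde x$ with the flag complexes on $\Gamma[S(x)]$ and $\Gamma[S(x)^c]$, and then conclude with Lemma~\ref{lem: nontrivial homomorphism} and the finite index of $W_\Gamma'$ in $W_\Gamma$.
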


\subsection{Bipartite moves}
In this paper we will be interested in particularly simple systems of moves.

\begin{definition}
    Let $\Gamma$ be a bipartite graph with the partition of the vertex set $V(\Gamma) = L \sqcup R$. The \emph{bipartite system of moves} is $\mathcal M = \{L,R\}$ where 
    $$M_v = \begin{cases}
        L \text{ when } v\in L\\
        R \text{ when } v\in R\\
    \end{cases}$$
\end{definition}
\begin{lemma}\label{lem:bipartite moves states}
Let $\Gamma$ be a bipartite graph with the partition of the vertex set $V(\Gamma) = L \sqcup R$, and let $\mathcal M$ be the bipartite system of moves. An orbit of states under the action of $\mathcal M$ corresponds to partitions $L = L_0\sqcup L_1$ and $R = R_0\sqcup R_1$, where the states are $S_{ij} = L_i\cup R_j$ for $i,j\in \{0,1\}$. Such a system is legal if and only if each state $S_{ij}$ is non-empty and spans a connected subgraph of $\Gamma$.
\end{lemma}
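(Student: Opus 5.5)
The plan is to compute the orbit of an arbitrary starting state $S_0$ under $\mathcal M=\{L,R\}$ directly, and then apply the definition of a legal system from Section~\ref{sec: states and moves}. Since $V(\Gamma)=L\sqcup R$, define $L_0 := S_0\cap L$ and $R_0 := S_0\cap R$, together with their complements $L_1 := L\setminus L_0$ and $R_1 := R\setminus R_0$. Then $S_0 = L_0\cup R_0 = S_{00}$.

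The first step is to record how each move acts on $L$ and on $R$. Taking the symmetric difference with $L$ replaces $S\cap L$ by its complement in $L$ and leaves $S\cap R$ unchanged. Likewise, taking the symmetric difference with $R$ complements $S\cap R$ inside $R$ and leaves the $L$-part fixed. The two moves commute and are involutions, so the action factors through $(\mathbb Z/2)^2$. Applying $L$, $R$, and $LR$ to $S_{00}$ therefore produces exactly $S_{10}$, $S_{01}$, and $S_{11}$, and the orbit of $S_0$ is precisely $\{S_{ij}\}$. Conversely, any pair of partitions $L=L_0\sqcup L_1$, $R=R_0\sqcup R_1$ arises this way, namely from the starting state $S_0=L_0\cup R_0$. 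This establishes the correspondence in the statement. Some of the four states may coincide, for example when $L_0=\emptyset$, but that is harmless because the orbit is still described by the same list.

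For the legality criterion, recall that the system is legal if and only if some orbit consists entirely of legal states. A state $S$ is legal when both $S$ and $S^c$ are nonempty and induce connected subgraphs. The key observation is that the complement of $S_{ij}$ in $V(\Gamma)$ is $L_{1-i}\cup R_{1-j}=S_{1-i,1-j}$, which again lies in the orbit. So requiring every $S_{ij}$ and every complement to be nonempty and connected is the same as requiring all four $S_{ij}$ to be nonempty and connected. This gives the stated equivalence for a given orbit, and quantifying over orbits, that is over choices of partitions, gives the statement about the system.

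There is no real obstacle here. The only point requiring care is the complement identity $S_{ij}^c = S_{1-i,1-j}$, which is what allows the condition on complements to be absorbed into the condition on the states themselves.
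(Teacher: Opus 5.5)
Your proof is correct and follows the paper's argument. It computes the orbit via $L_0=S\cap L$ and $R_0=S\cap R$, then uses the complement identity $V-S_{ij}=S_{(1-i)(1-j)}$ to absorb the conditions on complements into the conditions on the four states. One harmless slip: the four states can coincide only when $L$ or $R$ is empty, not when $L_0=\emptyset$ (e.g.\ $S_{00}=R_0\neq L\cup R_0=S_{10}$ whenever $L\neq\emptyset$).
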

\begin{proof}
    Given an initial state $S\subseteq V(\Gamma)$, let $L_0 = S\cap L$ and $R_0 = S\cap R$. Then the orbit of $S$ under the action of $\mathcal M$ is exactly $\{S_{00}, S_{01}, S_{10}, S_{11}\}$, and $\mathcal M$ acts on them by swapping $L_0 \leftrightarrow L_1$ (the move $L$) and $R_0 \leftrightarrow R_1$ (the move $R$). 
    Since $V - S_{ij} = S_{(1-i)(1-j)}$ for $i,j \in \{0,1\}$, these four states pair up into two complementary pairs, and checking legality of each state is equivalent to checking non-emptiness and connectedness of each graph spanned by $S_{ij}$.
\end{proof}

\begin{remark}(The $4$-vertex quotient $\overline X_{\Gamma}$ of $X_{\Gamma}$)\label{rem: 4vertex complex}
    Given a bipartite system of moves $\mathcal M = \{L,R\}$ and an orbit of states $\{S_{ij}\}_{i,j\in\{0,1\}}$ as above, the diagonal map $X_{\Gamma}\to S^1$ factors through the complex $\overline X_{\Gamma}$ that is the quotient of the Davis complex $\Sigma_\Gamma$ by the action of the (torsion-free) kernel $K_{\Gamma}$ of the homomorphism $W_{\Gamma}\to \mathbb Z/2\mathbb Z\times \mathbb Z/2\mathbb Z$ which sends each generator from $L$ to $(1,0)$ and each generator from $R$ to $(0,1)$.  In other words, the complex $\overline X_{\Gamma}$ has four vertices which we denote $x_{00}, x_{10}, x_{01}, x_{11}$, the following edges: 
    \begin{itemize}
        \item for $j=0,1$ one edge $e_{\ell, j}$ for each $\ell\in L$ between $x_{0j}$ and $x_{1j}$,
        \item for $i=0,1$ one edge  $e_{i, r}$ for each $r\in R$ between $x_{i0}$ and $x_{i1}$,
    \end{itemize}
     and a square $s_{\ell r}$ with the four distinct vertices, edges $e_{\ell,0}, e_{\ell,1}, e_{0,r}, e_{1,r}$ for each $(\ell, r)\in E(\Gamma)$.
    In particular, the induced homomorphism $\chi: W_{\Gamma}'\to \mathbb Z$ extends to $\overline\chi: \pi_1(\overline X_{\Gamma}) =K_{\Gamma}\to \mathbb Z$.
\end{remark}

\subsection{Graph operations and subgroups of right-angled Coxeter groups}\label{sec: graph operations}
For $v\in V(\Gamma)$, a \emph{neighbor} of $v$ is any $u\in V(\Gamma)$ such that $(v,u)\in E$. We denote the set of all neighbors of $v$ by $N(v)$. The \emph{link} $\lk(v)$ of $v$ is the subgraph of $\Gamma$ spanned by $N(v)$. The \emph{star} $\st(v)$ of $v$ is the subgraph spanned by $N(v) \cup \{v\}$.

Below we describe a few constructions of graphs obtained from a graph $\Gamma$ that yield defining graphs of right-angled Coxeter groups commensurable with $W_{\Gamma}$. All these constructions are known to the experts, see e.g. \cite{PallaviStarkThomas18, BoundxXie20, CashenDaniEdletzbergerKarrer}.

All three constructions below have the same shape. We delete a set $X$ of vertices, take several copies $\Gamma^\alpha$
of $\Gamma - X$ (or of $\Gamma-\{v_i\}$), indexed by a finite set $I$, and glue them along the
vertices of $\Gamma$ that are adjacent to the corresponding elements of $X$.
Concretely, for each $\alpha\in I$ we specify a set $D_\alpha\subseteq V(\Gamma)$ and an injective map
$\iota_\alpha\colon D_\alpha\to V(\Gamma')$, and then
$$E(\Gamma') = \bigl\{(\iota_\alpha(w),\iota_\alpha(w')) : \alpha\in I,\ w,w'\in D_\alpha,\ (w,w')\in E(\Gamma)\bigr\}.$$
We write $\Gamma^\alpha$ for the subgraph of $\Gamma'$ spanned by $\iota_\alpha(D_\alpha)$; it is isomorphic to the subgraph of $\Gamma$ spanned by $D_\alpha$.

We start with a vertex link double, which is obtained from taking two copies of $\Gamma-\{v\}$, and identifying them along $\lk(v)$.
\begin{definition}[Vertex link double]\label{defn: link double} Let $v\in V(\Gamma)$ and $B := V(\Gamma)-\st(v)$. The \emph{vertex link double} of $\Gamma$
along $v$ is the graph $\Gamma'$ with
$$V(\Gamma') = B^0\sqcup B^1\sqcup N(v),$$
where $B^i=\{b^i : b\in B\}$ are disjoint copies of $B$, and
$$E(\Gamma') = \bigl\{(\iota_i(w),\iota_i(w')) : i=0,1,\ w,w'\in V(\Gamma) -\{v\},\ (w,w')\in E(\Gamma)\bigr\}$$
where for $i=0,1$: $\iota_{i}(w) = \begin{cases}
    w^i \text{ if }w\in B\\ w \text{ if }w\in N(v)
\end{cases}$.

Thus $\Gamma'=\Gamma^0\cup_{\lk(v)}\Gamma^1$ is the union of two copies $\Gamma^0, \Gamma^1$  of $\Gamma-\{v\}$ glued
along $\lk(v)$. See Figure~\ref{fig:linkdouble-generic}.
\begin{figure}
\centering
\begin{tikzpicture}[
    v/.style={draw=black, fill=blue!15, circle, minimum size=1.2cm, font=\small},
    >=Stealth, thick
]
\node[v, fill=orange!15] (B0) at (0,0) {$B^0$};
\node[v, fill=gray!15] (Nv) at (4,0) {$N(v)$};
\node[v, fill=orange!12] (B1) at (8,0) {$B^1$};
\draw[-] (B0) -- (Nv) node[midway, above, font=\tiny] {
edges as in $\Gamma$
};
\draw[-] (Nv) -- (B1) node[midway, above, font=\tiny] {
edges as in $\Gamma$
};
\end{tikzpicture}
\caption{The link double of $\Gamma$
along $v$ consists of two disjoint copies $B^0, B^1$ of $B = \Gamma - v - N(v)$, each
connected to the single shared copy of $N(v)$ exactly as $B$ was connected
to $N(v)$ in $\Gamma$. In particular, there are no edges between $B^0$ and $B^1$.}
\label{fig:linkdouble-generic}
\end{figure}
\end{definition}

More generally given a collection of vertices with disjoint stars, one can apply the above construction simultaneously. We call the result of this operation simultaneous vertex link multiple.

\begin{definition}[Simultaneous vertex link multiple]\label{defn: link multiple}
Let $v_1,\dots,v_m\in V(\Gamma)$ have pairwise disjoint stars.
The \emph{simultaneous vertex link multiple} of $\Gamma$ along $v_1,\dots,v_m$ is the graph
$\Gamma'$ with the vertex set
$$V(\Gamma') = B^0\sqcup\bigsqcup_{i=1}^m\bigl(N(v_i)\sqcup B^i\bigr),$$
where 
$B^0 = \{b: b\in V(\Gamma)- \bigcup_{i=1}^m \st(v_i)\}$ and  $B^i = \{b^i : b\in V(\Gamma)-\st(v_i)\}$ for $i=1, \dots, m$,
and the edge set 
$$E(\Gamma') = \bigl\{(\iota_i(w),\iota_i(w')) : i=0,\dots, m,\ w,w'\in D_i,\ (w,w')\in E(\Gamma)\bigr\}$$
where 
$D_0 = V(\Gamma)-\{v_1,\dots,v_m\}$, $D_i = V(\Gamma)-\{v_i\}$ for $i\geq 1$, and
$$\iota_0(w)=w \text{ for all }w\in D_0,$$
$$\iota_i(w)=\begin{cases}
    w^i &\text{ if }w\in V(\Gamma)-\st(v_i)\\w &\text{ if } w\in N(v_i)\end{cases}$$
    for $i=1,\dots,m$.

Thus $\Gamma'$ is the union of $\Gamma^0\cong\Gamma-\{v_1,\dots,v_m\}$ and
$\Gamma^i\cong\Gamma-\{v_i\}$, where $\Gamma^i$ meets $\Gamma^0$ in $N(v_i)$ and
$\Gamma^i\cap\Gamma^j=\emptyset$ for $1\leq i<j$. 
See Figure~\ref{fig:simultaneous link multiple}.
\begin{figure}
\centering
\begin{tikzpicture}[
    v/.style={draw=black, fill=blue!15, circle, minimum size=1.2cm, font=\small},
    >=Stealth, thick
]
\node[v, fill=orange!12] (B0) at (0,0) {$B^0$};
\node[v, fill=gray!15] (Nv3) at (306:1.5) {$N(v_3)$};
\node[v, fill=gray!15] (Nv2) at (18:1.5) {$N(v_2)$};
\node[v, fill=gray!15] (Nv1) at (90:1.5) {$N(v_1)$};
\node[v, fill=gray!15] (Nv4) at (234:1.5) {$N(v_4)$};
\node[v, fill=gray!15] (Nv5) at (162:1.5) {$N(v_5)$};
\node[v, fill=orange!12] (B3) at (306:3) {$B^3$};
\node[v, fill=orange!12] (B2) at (18:3) {$B^2$};
\node[v, fill=orange!12] (B1) at (90:3) {$B^1$};
\node[v, fill=orange!12] (B4) at (234:3) {$B^4$};
\node[v, fill=orange!12] (B5) at (162:3) {$B^5$};

\draw[-] (B0) -- (Nv1) -- (B1);
\draw[-] (B0) -- (Nv2) -- (B2);
\draw[-] (B0) -- (Nv3) -- (B3);
\draw[-] (B0) -- (Nv4) -- (B4);
\draw[-] (B0) -- (Nv5) -- (B5);
\draw[-] (Nv1) -- (Nv2) -- (Nv3) -- (Nv4) -- (Nv5) -- (Nv1);
\foreach \a/\b/\t in {1/3/90,2/4/18,3/5/306,4/1/234,5/2/162}
  \draw (Nv\a) to[out=\t-150, in=\t+6] (Nv\b);
\end{tikzpicture}
\caption{The simultaneous vertex link multiple of $\Gamma$ along $v_1, v_2, v_3, v_4, v_5$.}
\label{fig:simultaneous link multiple}
\end{figure}
\end{definition}

The last construction we consider is the edge quadruple of $\Gamma$ along an edge $(u,v)$. 

\begin{definition}[Edge quadruple]\label{defn: edge quadruple}
Let $(u,v)\in E(\Gamma)$, and write
$$B := V(\Gamma) - \bigl(\st(u) \cup \st(v)\bigr),\;\;
N_u := N(u) - \st(v),\;\;
N_v := N(v) - \st(u),\;\; 
N_{uv} := N(u) \cap N(v)$$
so that $V(\Gamma) - \{u,v\} = B\sqcup N_u\sqcup N_v\sqcup N_{uv}$. The \emph{edge quadruple}
of $\Gamma$ along $(u,v)$ is the graph $\Gamma'$ with the vertex set
$$V(\Gamma') = \Big(\bigsqcup_{i,j\in\{0,1\}}B^{ij}\Big)\sqcup\Big(\bigsqcup_{j\in\{0,1\}}N_u^j\Big)
\sqcup\Big(\bigsqcup_{i\in\{0,1\}}N_v^i\Big)\sqcup N_{uv},$$
where $B^{ij}=\{b^{ij}:b\in B\}$, $N_u^j=\{x^j:x\in N_u\}$, $N_v^i=\{y^i:y\in N_v\}$ are
disjoint copies, and the edge set 
$$E(\Gamma') = \bigl\{(\iota_{ij}(w),\iota_{ij}(w')) : i,j=0,1,\ w,w'\in V(\Gamma)-\{u,v\},\ (w,w')\in E(\Gamma)\bigr\}$$
where 
$$\iota_{ij}(w)=
\begin{cases}
    w^{ij} &\text{ if }w\in B\\ w^j &\text{ if }w\in N_u\\ w^i &\text{ if }w\in N_v\\ w &\text{ if }w\in N_{uv}\\
\end{cases}.$$
Thus $\Gamma'$ is the union of four copies $\Gamma^{ij}$ of $\Gamma-\{u,v\}$, where
$\Gamma^{ij}$ and $\Gamma^{i'j'}$ share $N_u^j$ if $j=j'$, share $N_v^i$ if $i=i'$, and always
share $N_{uv}$. 
See Figure~\ref{fig:edge quadruple}.

\begin{figure}
\centering
\begin{tikzpicture}[
    v/.style={draw=black, fill=blue!15, circle, minimum size=1.2cm, font=\small},
    >=Stealth, thick
]
\node[v, fill=orange!12] (Nab) at (0,0) {$N_{uv}$};
\node[v, fill=gray!15] (Na1) at (330:2) {$N_u^1$};
\node[v, fill=purple!15] (Nb0) at (30:2) {$N_v^0$};
\node[v, fill=gray!15] (Na0) at (150:2) {$N_u^0$};
\node[v, fill=purple!15] (Nb1) at (210:2) {$N_v^1$};
\node[v, fill=blue!15] (B01) at (0: 3.25) {$B^{01}$};
\node[v, fill=blue!15] (B00) at (90: 2.25) {$B^{00}$};
\node[v, fill=blue!15] (B10) at (180: 3.25) {$B^{10}$};
\node[v, fill=blue!15] (B11) at (270 : 2.25) {$B^{11}$};

\draw[-] (Nab) -- (Na0) -- (Nb0) -- (Nab) -- (Nb1) -- (Na1) -- (Nab);
\draw[-] (Nb1) -- (Na0);
\draw[-] (Na1) -- (Nb0);
\draw[-] (B00) -- (Na0) -- (B10) -- (Nb1) -- (B11) -- (Na1) -- (B01) -- (Nb0) -- (B00);
\end{tikzpicture}
\caption{The edge quadruple of $\Gamma$ along $e = (u,v)$.}
\label{fig:edge quadruple}
\end{figure}
\end{definition}

The above constructions are of interest to us, because the associated right-angled Coxeter groups on graphs obtained from $\Gamma$ via these constructions have finite index in $W_{\Gamma}$. 
\begin{proposition} Let $\Gamma'$ be a graph obtained from $\Gamma$ by vertex link doubling, edge quadrupling, or simultaneous vertex link multiplying. Then the right-angled Coxeter group $W_{\Gamma'}$ embeds as a finite index subgroup in $W_{\Gamma}$.
\end{proposition}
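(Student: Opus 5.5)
The plan is to realize each $W_{\Gamma'}$ as the kernel of a homomorphism from $W_\Gamma$ onto a finite group. That kernel is then a reflection subgroup generated by conjugates of generators, and we identify its Coxeter presentation. The standard tool is the following. If $\phi\colon W_\Gamma\to F$ is onto a finite group, then $\ker\phi$ acts on the Davis complex $\Sigma_\Gamma$. If $\ker\phi$ is generated by reflections, Deodhar/Dyer's theorem shows that the reflection subgroup is itself a Coxeter group. Its Coxeter generators are the reflections in the walls of a fundamental chamber, namely a union of $|F|$ translates of the chamber of $W_\Gamma$. Instead of invoking that theory in general, I would compute the kernel directly by Reidemeister--Schreier with a Schreier transversal, or equivalently by covering theory of the Cayley $2$-complex.

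For the vertex link double along $v$, take $\phi\colon W_\Gamma\to\mathbb Z/2$ sending $v\mapsto 1$ and every other generator to $0$. The kernel has index $2$ with transversal $\{1,v\}$. Reidemeister--Schreier gives generators $w$ and $vwv$ for $w\neq v$, and $v^2$ is trivial. For $w\in N(v)$ we have $vwv=w$, so the generators are exactly $B^0\sqcup B^1\sqcup N(v)$ via $b^0=b$ and $b^1=vbv$. The rewritten relators are $w^2$, $(vwv)^2$, and the commutation relations $(ww')^2$ together with their $v$-conjugates for $(w,w')\in E(\Gamma)$ not involving $v$. The relators $(vw)^2$ become trivial or identify $vwv$ with $w$. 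These are precisely the relations of $W_{\Gamma'}$, with edges between copies as prescribed by $\iota_0,\iota_1$. So $W_{\Gamma'}\cong\ker\phi$, which has index $2$.

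For the edge quadruple along $(u,v)$, use $\phi\colon W_\Gamma\to(\mathbb Z/2)^2$ with $u\mapsto(1,0)$, $v\mapsto(0,1)$ and other generators sent to $0$. Take the transversal $\{1,u,v,uv\}$, noting $uv=vu$. The generators of the kernel are $g w g^{-1}$ for $g$ in the transversal and $w\neq u,v$. Conjugation by $u$ fixes $N(u)$ and conjugation by $v$ fixes $N(v)$. Hence $B$ yields four copies, $N_u$ yields two copies (indexed by whether $v$ is applied), $N_v$ yields two copies, and $N_{uv}$ yields one. Matching $uv$-conjugation with index $(1,1)$ and so on reproduces $\iota_{ij}$. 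The relators are the four conjugates of each edge relation of $\Gamma-\{u,v\}$, giving exactly $E(\Gamma')$; the remaining relators again become trivial or identifications. The simultaneous link multiple is handled the same way. Here $\phi\colon W_\Gamma\to(\mathbb Z/2)^m$ sends $v_i$ to the $i$-th basis vector, and we would restrict to the subgroup generated by the kernel together with the appropriate conjugates. Alternatively, and more simply, we could take the subgroup generated by $D_0$ and $v_iB v_i$ for each $i$. Because the stars are disjoint, $v_i$ and $v_j$ do not commute and conjugates do not interact. I would realize this as the index-$(m+1)$ kernel-type subgroup (the preimage of a point stabilizer) coming from the action of $W_\Gamma$ on $m+1$ points, where $v_i$ acts as the transposition $(0\ i)$ and the other generators act trivially. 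This action respects the relations, since two different $v_i$'s are never adjacent, so no relation $(v_iv_j)^2$ is required. Reidemeister--Schreier with transversal $\{1,v_1,\dots,v_m\}$ for the stabilizer of $0$ gives generators $w$ for $w\in D_0$ and $v_iwv_i$ for $w\neq v_i$. For $w\in N(v_i)$ these coincide with $w$, and $v_iv_jv_i$ with $j\ne i$ is a generator attached to the copy $B^i$. This matches $V(\Gamma')$, and the relators match $E(\Gamma')$.

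The main obstacle is the bookkeeping in Reidemeister--Schreier. We must check that the rewritten relators $(gwg^{-1})^2$ and the conjugated edge relations give exactly the edges of $\Gamma'$ and nothing more. We must also check that the relators involving the deleted vertices only produce the identifications $gwg^{-1}=w$, with no extra relations. In the multiple case, the rewriting of $v_i^2$ and of $(v_iw)^2$ for $w\in N(v_i)$ requires the careful step. Having the coset action explicitly makes each rewriting a mechanical check.
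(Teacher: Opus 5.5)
Your proposal is correct, but it reaches the result by a different route from the paper.

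\textbf{Same subgroups.} You construct exactly the paper's subgroups: the kernels of the retractions $W_\Gamma\to\langle v\rangle$ and $W_\Gamma\to\langle u,v\rangle$, and, for the multiple, the stabilizer of $0$ under your action on $\{0,\dots,m\}$. That stabilizer is the paper's $\phi^{-1}(K)$. The action factors through the retraction $\phi\colon W_\Gamma\to H=\langle v_1,\dots,v_m\rangle$, and the Schreier generators of $\operatorname{Stab}_H(0)$ for the transversal $\{1,v_1,\dots,v_m\}$ are exactly the elements $v_iv_jv_i$ that generate $K$.

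\textbf{Different method.} The paper lists conjugates of generators by a transversal and cites Deodhar and Dyer to conclude that they form a right-angled Coxeter generating set whose commutations are read off from $\Gamma'$. It gets $[H:K]=m+1$ from a separate graph-of-groups covering lemma. Your Reidemeister--Schreier computation is self-contained and yields the presentation of $W_{\Gamma'}$ directly. It also shows explicitly that no extra relations appear, a point the paper leaves to the citation:
\begin{itemize}
\item the relators involving the deleted vertices only give identifications $gwg^{-1}=w$;
\item the Schreier generators built from $u,v$, such as $vu(uv)^{-1}$ (which arises because $vu$ is not a transversal word), are trivialized by the rewritten relators $u^2$, $v^2$, $(uv)^2$.
\end{itemize}
The index $m+1$ then comes for free from transitivity of the permutation action. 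The cost is the bookkeeping you anticipate, and the delicate case is the one you point to. Rewriting $(v_iw)^2$ with $w\in N(v_i)$ at the coset of $v_j$, $j\neq i$, gives the commutation of $v_jv_iv_j$ with $v_jwv_j$, an edge of $\Gamma^j$. Matching it with $\Gamma'$ uses $N(v_i)\subseteq V(\Gamma)-\st(v_j)$. This is where disjointness of stars enters, in addition to non-adjacency of $v_i,v_j$, which is what makes the action well defined.

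\textbf{One correction.} Drop your first suggestion for the multiple via $W_\Gamma\to(\mathbb Z/2)^m$. That kernel has index $2^m\neq m+1$ for $m\geq 2$, so it is not $W_{\Gamma'}$. The $(m+1)$-point action is the right construction.
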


\begin{proof}
    For the link double along $v$, we claim that $W_{\Gamma'}$ is isomorphic to the kernel of the retraction $r: W_{\Gamma}\to \langle v\rangle \simeq\mathbb Z/2$ onto $v$. Indeed, the generators of $\ker r$ are all of the form $u, u^v$ for $u\in V(\Gamma)-\{v\}$. Since $u = u^v$ when $u\in N(v)$, the complete list of generators are 
    $u\in N(v)$ and $b, b^v$ for $b\in B$, which we denote by $b^0, b^1\in V(\Gamma')$ respectively. By \cite{Deodhar1989, Dyer1990} $\ker r$ is indeed a right-angled Coxeter group on these generators, and with the commuting relation encoded by the graph $\Gamma'$.
    
    For the edge quadruple along $(u,v)$, we claim that $W_{\Gamma'}$ is isomorphic to the kernel of the retraction $r:W_{\Gamma}\to \langle u,v\rangle \simeq \mathbb Z/2\times \mathbb Z/2$. The generators of $\ker r$ are of the form $w, w^u, w^v, w^{uv}$. Since $w = w^u = w^v = w^{uv}$ when $w\in N_{uv}$ there is a unique generator from $N_{uv}$ in $W_{\Gamma'}$. For $w\in N_u$ we have $w = w^u$ and $w^v = w^{uv}$, we denote these by $w^0, w^1\in V(\Gamma')$. Similarly, for $w\in N_v$ we have $w = w^v$ and $w^u =w^{uv}$, we denote these by $w^0, w^1\in V(\Gamma')$. Finally, for $w\in B$, $w, w^u, w^v, w^{uv}$ are all distinct, and denoted by $w^{00}, w^{10}, w^{01}, w^{11} \in V(\Gamma')$. Again, by \cite{Deodhar1989, Dyer1990} $\ker r$ is a right-angled Coxeter group on all these generators listed above, and so the commuting relations are encoded by $\Gamma'$.
    A detailed proof of this proposition for the vertex link double and edge quadruple, as well as a more general procedure of $2^n$-multiple along an $n$-simplex, can be also found in \cite[Prop 5.2]{BoundxXie20}.

    We now explain the case of the simultaneous vertex link multiple (which is also known to the experts). Since $v_1,\dots, v_m$ have disjoint stars, the subgroup $H = \langle v_1, \dots, v_m\rangle_{W_{\Gamma}}$ of $W_{\Gamma}$ is isomorphic to the free product $\mathbb Z/2* \dots * \mathbb Z/2$ of $m$ copies of $\mathbb Z/2$. Let $\phi: W_{\Gamma}\to H$ be the retraction onto $H$. 
    Let $K =\langle v_iv_jv_i :i\neq j\rangle_H$ of $H$, which by Lemma~\ref{lem: free product of Z/2} has index $[H:K]=m+1$. We claim that the group $W_{\Gamma'}$ is the preimage $\phi^{-1}(K)$. Since ${e, v_1, \dots, v_m}$ is a transversal of $K$ in $H$, the generators of $\phi^{-1}(K)$ are the generators of $V(\Gamma)-\{v_1, \dots, v_m\}$ together with all the conjugates $u^{v_i}$ for $i=1, \dots, m$, and $u\in V(\Gamma)-\{v_i\}$. We note that $u = u^{v_i}$ for all $u\in N(v_i)$, and $u^{v_i}\neq u^{v_j}$ for all $j\neq i$, since by assumption $N(v_i)\cap N(v_j) = \emptyset$.
    Thus all the generators of $\phi^{-1}(K)$ can be organized as in the definition~\ref{defn: link multiple} where $u^i$ denotes $u^{v_i}$.  
    As before, by \cite{Deodhar1989, Dyer1990} $\phi^{-1}(K)$ is a right-angled Coxeter group on these generators, and so the commuting relations are encoded by $\Gamma'$.
\end{proof}

\begin{lemma}\label{lem: free product of Z/2}
    Let $H = \langle v_1, \dots, v_m\mid v_i^2=1 \text{ for all }i=1,\dots, m\rangle$. Let $K$ be the subgroup of $H$ generated by all the conjugates $v_i^{v_j} = vjv_iv_j$ of the generators by the generators with $i\neq j$. Then the index $[H:K] = m+1$.
\end{lemma}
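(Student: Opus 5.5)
We compute the index of $K$ in $H$ by exhibiting $K$ as the kernel of an action of $H$ on a set of $m+1$ points, namely $\{e, v_1,\dots,v_m\}$, which should be a transversal. The plan is to show that $K$ is normal, that $\{e,v_1,\dots,v_m\}$ covers all cosets, and that these $m+1$ cosets are pairwise distinct.

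First, $K$ is normal in $H$. It suffices to check that conjugating a generator $v_jv_iv_j$ of $K$ by any $v_l$ gives an element of $K$. If $l\neq j$, then
$$v_l(v_jv_iv_j)v_l = (v_lv_jv_l)(v_lv_iv_l)(v_lv_jv_l).$$
Each factor $v_lv_tv_l$ is either a generator of $K$ (when $t\neq l$) or equals $v_l$ (when $t=l$). The case $t=l$ arises only for the middle factor with $i=l$, and then the product is $(v_lv_jv_l)\,v_l\,(v_lv_jv_l)=v_lv_jv_lv_jv_l$. That does not obviously lie in $K$, so a cleaner route is needed.

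The cleaner route is to use an explicit homomorphism instead of normality. Let $H$ act on $\{0,1,\dots,m\}$ by letting $v_i$ act as the transposition $(0\ i)$. This is well defined because transpositions are involutions. The subgroup $K$ lies in the stabilizer of $0$: for $i\neq j$, the element $v_jv_iv_j$ acts as the conjugate transposition $(j\ i)$, which fixes $0$. The action is transitive, with $v_i$ sending $0\mapsto i$. Hence $\mathrm{Stab}(0)$ has index $m+1$, and this gives $[H:K]\geq m+1$.

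For the reverse inequality, we show that $K\cup\bigcup_i v_iK$ is closed under left multiplication by each generator $v_l$, and hence equals $H$ since it contains $e$. There are three cases to check.
\begin{itemize}
\item $v_lK$ is one of the listed sets.
\item $v_l(v_lK)=K$.
\item For $i\neq l$, $v_lv_iK = v_l(v_iv_lv_i)K$ would require $v_iv_lv_i\in K$, which holds. So we need $v_lv_i\in v_l'K$ for some listed $v_{l'}$. Writing $v_lv_i = v_i(v_iv_lv_i)$ gives $v_lv_iK=v_iK$, since $v_iv_lv_i\in K$.
\end{itemize}
Thus there are at most $m+1$ cosets, and we conclude $[H:K]=m+1$. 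As a consistency check, this shows $K=\mathrm{Stab}(0)$.

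The only subtle step is the third case above, the coset computation $v_lv_iK=v_iK$, which rests on writing $v_lv_i=v_i\cdot v_iv_lv_i$. Everything else is routine.
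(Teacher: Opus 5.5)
Your proof is correct, but it takes a different route from the paper. The paper writes $H$ as the fundamental group of a star-shaped graph of groups (trivial central vertex group, $\mathbb Z/2$ at each of the $m$ leaves). It then identifies $K$ with the fundamental group of an explicit $(m+1)$-sheeted cover: a tree built from $m+1$ copies of the star, whose only nontrivial vertex groups are the $m(m-1)$ leaves $\langle v_i^{v_j}\rangle$. The index is read off as the degree of the cover.

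You bound the index from below using the permutation representation $v_i\mapsto(0\ i)$, which places $K$ inside a point stabilizer of index $m+1$. You bound it from above using the identity $v_lv_i=v_i(v_iv_lv_i)$, which gives $v_lv_iK=v_iK$ for $l\neq i$, so $K\cup v_1K\cup\dots\cup v_mK$ is closed under the generators.

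The two arguments encode the same combinatorics. The action of the $v_l$ on your $m+1$ cosets matches the paper's cover exactly. Each swap $K\leftrightarrow v_lK$ is the lift of the $l$-th leaf with trivial group. Each coset $v_iK$ fixed by $v_l$ is one of the leaves $\langle v_l^{v_i}\rangle$.

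Your version is completely elementary. It proves directly that $\{e,v_1,\dots,v_m\}$ is a transversal and that $K=\mathrm{Stab}(0)$, which is precisely what the paper invokes in the Reidemeister--Schreier step for simultaneous vertex link multiples. The covering picture needs Bass--Serre covering theory, but in exchange it also shows that $K$ is the free product of the $m(m-1)$ groups $\langle v_i^{v_j}\rangle\cong\mathbb Z/2$.

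Two expository fixes. First, your opening calls $K$ a kernel and proposes proving normality. In fact $K$ is a point stabilizer and is not normal once $m\ge 2$: the element $v_lv_jv_lv_jv_l$ you produced acts as the transposition $(0\ j)$, so it lies outside $\mathrm{Stab}(0)\supseteq K$. That paragraph should simply be deleted rather than left as inconclusive. Second, the first sentence of your third bullet (about $v_lv_iK=v_l(v_iv_lv_i)K$) is a non sequitur; the identity $v_lv_i=v_i(v_iv_lv_i)$ alone does the job.
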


\begin{proof}
\begin{figure}
\centering
\begin{tikzpicture}[
    v/.style={draw=black, fill=blue!15, circle, minimum size=0.1cm, font=\small},
    >=Stealth, thick
]
\begin{scope}[shift={(-8,0)}]
\node[v, fill=gray!15] (G0) at (0,0) {\tiny $e$};
\node[v, fill=orange!15] (G1) at (18:1.5) {\tiny$v_1$};
\node[v, fill=orange!15] (G2) at (90:1.5) {\tiny$v_2$};
\node[v, fill=orange!15] (G3) at (162:1.5) {\tiny$v_3$};
\node[v, fill=orange!15] (G4) at (234:1.5) {\tiny$v_4$};
\node[v, fill=orange!15] (G5) at (306:1.5) {\tiny$v_5$};
\draw (G0) -- (G1);
\draw (G0) -- (G2);
\draw (G0) -- (G3);
\draw (G0) -- (G4);
\draw (G0) -- (G5);
\end{scope}

\begin{scope}
\node[v, fill=gray!15] (G0) at (0,0) {\tiny $e$};
\node[v, fill=gray!15] (G1') at (18:1.5) {\tiny $e$};
\node[v, fill=gray!15] (G2') at (90:1.5) {\tiny $e$};
\node[v, fill=gray!15] (G3') at (162:1.5) {\tiny $e$};
\node[v, fill=gray!15] (G4') at (234:1.5) {\tiny $e$};
\node[v, fill=gray!15] (G5') at (306:1.5) {\tiny $e$};

\node[v, fill=gray!15] (G1) at (18:3) {\tiny $e$};
\node[v, fill=gray!15] (G2) at (90:3) {\tiny $e$};
\node[v, fill=gray!15] (G3) at (162:3) {\tiny $e$};
\node[v, fill=gray!15] (G4) at (234:3) {\tiny $e$};
\node[v, fill=gray!15] (G5) at (306:3) {\tiny $e$};

\draw (G0) --  (G1') -- (G1);
\draw (G0) --  (G2') -- (G2);
\draw (G0) --  (G3') -- (G3);
\draw (G0) --  (G4') -- (G4);
\draw (G0) --  (G5') -- (G5);
\end{scope}

\begin{scope}[shift={(18:3)}]
\node[v, fill=orange!15] (G12) at (54:1) {\supertiny $v_3^{v_1}$};
\node[v, fill=orange!15] (G13) at (129:1) {\supertiny $v_2^{v_1}$};
\node[v, fill=orange!15] (G14) at (270:1) {\supertiny $v_5^{v_1}$};
\node[v, fill=orange!15] (G15) at (342:1) {\supertiny $v_4^{v_1}$};
\draw (G1) -- (G12);
\draw (G1) -- (G13);
\draw (G1) -- (G14);
\draw (G1) -- (G15);
\end{scope}

\begin{scope}[shift={(90:3)}]
\node[v, fill=orange!15] (G21) at (54:1) {\supertiny $v_5^{v_2}$};
\node[v, fill=orange!15] (G23) at (129:1) {\supertiny $v_4^{v_2}$};
\node[v, fill=orange!15] (G24) at (198:1) {\supertiny $v_3^{v_2}$};
\node[v, fill=orange!15] (G25) at (342:1) {\supertiny $v_1^{v_2}$};
\draw (G2) -- (G21);
\draw (G2) -- (G23);
\draw (G2) -- (G24);
\draw (G2) -- (G25);
\end{scope}

\begin{scope}[shift={(162:3)}]
\node[v, fill=orange!15] (G31) at (54:1) {\supertiny $v_2^{v_3}$};
\node[v, fill=orange!15] (G32) at (129:1) {\supertiny $v_1^{v_3}$};
\node[v, fill=orange!15] (G34) at (198:1) {\supertiny $v_5^{v_3}$};
\node[v, fill=orange!15] (G35) at (270:1) {\supertiny $v_4^{v_3}$};
\draw (G3) -- (G31);
\draw (G3) -- (G32);
\draw (G3) -- (G34);
\draw (G3) -- (G35);
\end{scope}

\begin{scope}[shift={(234:3)}]
\node[v, fill=orange!15] (G41) at (129:1) {\supertiny $v_3^{v_4}$};
\node[v, fill=orange!15] (G42) at (198:1) {\supertiny $v_2^{v_4}$};
\node[v, fill=orange!15] (G43) at (270:1) {\supertiny $v_1^{v_4}$};
\node[v, fill=orange!15] (G45) at (342:1) {\supertiny $v_5^{v_4}$};
\draw (G4) -- (G41);
\draw (G4) -- (G42);
\draw (G4) -- (G43);
\draw (G4) -- (G45);
\end{scope}

\begin{scope}[shift={(306:3)}]
\node[v, fill=orange!15] (G51) at (54:1) {\supertiny $v_1^{v_5}$};
\node[v, fill=orange!15] (G52) at (198:1) {\supertiny $v_4^{v_5}$};
\node[v, fill=orange!15] (G53) at (270:1) {\supertiny $v_3^{v_5}$};
\node[v, fill=orange!15] (G54) at (342:1) {\supertiny $v_2^{v_5}$};
\draw (G5) -- (G51);
\draw (G5) -- (G52);
\draw (G5) -- (G53);
\draw (G5) -- (G54);
\end{scope}
\end{tikzpicture}
\caption{The degree $m+1$ cover of the graph of groups. Each gray vertex group is trivial, and each orange vertex group is a copy of $\mathbb Z/2$ generated by the listed element. We write $v_i^{v_j}$ for $v_jv_iv_j$.}
\label{fig:graph of groups}
\end{figure}
    Consider a graph of groups structure of $H$ with the underlying graph an $m$-star (i.e.\ the graph with one vertex of degree $m$, and $m$ vertices of degree $1$) where each vertex group is a copy of $\mathbb Z/2$ for a vertex which is a leaf, and the non-leaf vertex group is trivial. See the left part of Figure~\ref{fig:graph of groups}. Then $K$ is the graph of groups cover represented in right part of Figure~\ref{fig:graph of groups}. Indeed, each leaf of this graph has a vertex group $\langle v_i^{v_j}\rangle_H \simeq \mathbb Z/2$. The underlying graph for $K$ is built out of $m+1$ copies of the underlying $m$-star graph of $H$. Thus $[H:K]=m+1$. 
\end{proof}

\begin{proposition}\label{prop: graph operation bipartite}
Suppose that $\Gamma$ is a bipartite graph with a vertex set partition $V(\Gamma) = L\sqcup R$.  Let $\Gamma'$ be obtained from $\Gamma$ by one of the three graph operations: vertex link double, simultaneous vertex link multiple, or edge quadruple. Then there is a natural map $\pi: \Gamma'\to\Gamma$ of graphs.
In particular, $\Gamma'$ is bipartite with a vertex set partition $V(\Gamma') = \pi^{-1}(L) \sqcup \pi^{-1}(R)$.
\end{proposition}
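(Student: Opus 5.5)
The plan is to define $\pi$ uniformly using the common shape of the three constructions. In each case every vertex of $\Gamma'$ has the form $\iota_\alpha(w)$ for some $\alpha\in I$ and $w\in D_\alpha\subseteq V(\Gamma)$. We set $\pi(\iota_\alpha(w)) = w$, that is, we forget the superscripts: $b^i\mapsto b$, $w^{ij}\mapsto w$, $w^j\mapsto w$, and a vertex without superscript maps to itself.

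First we check that $\pi$ is well defined. A vertex of $\Gamma'$ may lie in the image of several $\iota_\alpha$, for example a vertex of $N(v)$ lies in the images of both $\iota_0$ and $\iota_1$. We therefore verify from the explicit formulas that $\iota_\alpha(w)=\iota_\beta(w')$ implies $w=w'$. This holds because the vertex set of $\Gamma'$ is a disjoint union of labelled copies $\{x^\bullet : x\in X\}$ of subsets $X\subseteq V(\Gamma)$, and each $\iota_\alpha$ sends $w$ into a copy indexed by $w$ itself. Equivalently, $\pi$ is just the map sending the copy $x^\bullet$ to $x$, which is well defined on each piece of the disjoint union $V(\Gamma')$.

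Next we show that $\pi$ is a graph map. Every edge of $\Gamma'$ has the form $(\iota_\alpha(w),\iota_\alpha(w'))$ with $(w,w')\in E(\Gamma)$, and $\pi$ sends it to $(w,w')\in E(\Gamma)$. Thus $\pi$ maps edges to edges, and in particular never collapses an edge, since $w\neq w'$ because $\Gamma$ is simplicial.

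Bipartiteness then follows formally. If $(x,y)$ is an edge of $\Gamma'$, then $(\pi(x),\pi(y))$ is an edge of $\Gamma$, so exactly one of $\pi(x),\pi(y)$ lies in $L$. Hence $\pi^{-1}(L)$ and $\pi^{-1}(R)$ are independent sets, and they partition $V(\Gamma')$ because $L\sqcup R$ partitions $V(\Gamma)$. The only step needing care is well-definedness on the shared vertices, such as $N(v)$, $N(v_i)$, $N_u^j$, $N_v^i$ and $N_{uv}$; this is handled by the case-by-case inspection of the formulas for $\iota_\alpha$ described above.
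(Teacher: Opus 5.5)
Your proposal is correct and follows the paper's proof: define $\pi$ by forgetting superscripts, observe that every edge of $\Gamma'$ is a lift of an edge of $\Gamma$, and pull back the partition $L\sqcup R$. Your explicit check that $\pi$ is well defined on the shared vertices ($N(v)$, $N(v_i)$, $N_u^j$, $N_v^i$, $N_{uv}$) is left implicit in the paper.
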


\begin{proof}
The map $\pi: \Gamma'\to \Gamma$ is defined on vertices by taking $v^i\to v$ for any (possibly empty) label $i$. The map extends to edges, since every edge of $\Gamma'$ is a lift of an edge in $\Gamma$. Thus the partition $V(\Gamma') = \pi^{-1}(L)\sqcup \pi^{-1}(R)$ provides the bipartite structure of $\Gamma'$. 
\end{proof}

\begin{definition}
    Let $\Gamma, \Gamma'$ be as in Proposition~\ref{prop: graph operation bipartite}. We say $v\in V(\Gamma')$ is an \emph{$L$-vertex} (respectively an \emph{$R$-vertex}) if $\pi(v) \in L$ (respectively $\pi(v)\in R$).
\end{definition}

\section{Legal systems for $n \leq 2k-1$}\label{sec: legal systems}
\subsection{The case of $n < 2k-1$} 

\begin{figure}
\centering
\begin{tikzpicture}[
    on1/.style={draw=black, fill=green!15, rounded corners,
               minimum width=2.8cm, minimum height=0.7cm, font=\small},
    off1/.style={draw=black, fill=blue!15, rounded corners,
               minimum width=2.8cm, minimum height=0.7cm, font=\small},
    on2/.style={draw=black, fill=red!15, rounded corners,
                minimum width=2.8cm, minimum height=0.7cm, font=\small},
    off2/.style={draw=black, fill=orange!15, rounded corners,
                minimum width=2.8cm, minimum height=0.7cm, font=\small},
    col/.style={font=\small\bfseries, align=center},
    >=Stealth, thick
]
 
\node[col] at (-3.5, 5.2) {Numbers};
\node[on1]  (N1) at (-3.5, 4.2) {$1, 3, 5, \ldots$};
\node[off1] (N2) at (-3.5, 2.8) {$2, 4, 6, \ldots$};
 
\node[col] at (3.5, 5.2) {$k$-subsets};
\node[on2]  (C1) at (3.5, 4.5) {$k$-subsets with $1$ except $\tau$};
\node[off2] (C2) at (3.5, 3.5) {$\tau = \{1,n,n{-}1,\ldots,n{-}k{+}2\}$};
\node[off2] (C3) at (3.5, 2.5) {$k$-subsets without $1$};
 
\draw (N1.east) -- (C1.west)
    node[midway, above, font=\tiny] {$1 \in \sigma$, mixed parity};
\draw (N1.east) -- (C2.west);
 
\draw (N2.east) -- (C1.west);
\draw (N2.east) -- (C2.west);
\draw (N2.east) -- (C3.west);
 
\node[on1, minimum width=0.8cm, minimum height=0.45cm] at (-2.25, 1.2) {};
\node[right, font=\small] at (-2.55, 1.2) {$L_0$};
\node[off1, minimum width=0.8cm, minimum height=0.45cm] at (-0.75, 1.2) {};
\node[right, font=\small] at (-1.05, 1.2) {$L_1$};
\node[on2, minimum width=0.8cm, minimum height=0.45cm] at (0.75, 1.2) {};
\node[right, font=\small] at (0.45, 1.2) {$R_0$};
\node[off2, minimum width=0.8cm, minimum height=0.45cm] at (2.25, 1.2) {};
\node[right, font=\small] at (1.95, 1.2) {$R_1$};

\end{tikzpicture}
\caption{One of the legal states in $\Lambda(n,k)$ with $k < n < 2k-1$.
}
\label{fig:small-n}
\end{figure}

The following was proven in \cite[Example 6.2]{JankiewiczNorinWise16}, we include the proof for completeness.
 
\begin{theorem}\label{thm:small-n}
For $3\leq k < n < 2k-1$, the graph $\Lambda(n,k)$ admits a legal system.
\end{theorem}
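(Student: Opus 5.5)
We would use the bipartite system of moves $\mathcal M=\{L,R\}$, with $L$ the set of number vertices and $R$ the set of $k$-subset vertices, together with the orbit of states drawn in Figure~\ref{fig:small-n}. By Lemma~\ref{lem:bipartite moves states}, such an orbit is given by partitions $L=L_0\sqcup L_1$ and $R=R_0\sqcup R_1$, and it is legal if and only if each of the four states $S_{ij}=L_i\cup R_j$ is non-empty and spans a connected subgraph. We choose
\begin{align*}
L_0&=\{\text{odd numbers}\}, & L_1&=\{\text{even numbers}\},\\
R_0&=\{\sigma : 1\in\sigma,\ \sigma\neq\tau\}, & R_1&=\{\tau\}\cup\{\sigma : 1\notin\sigma\},
\end{align*}
where $\tau=\{1,n-k+2,\dots,n\}$. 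All four states are clearly non-empty, so the proof reduces to four connectivity checks. Since $\Lambda(n,k)$ is bipartite, each check splits into two parts: (a) every subset vertex in $R_j$ has a neighbor in $L_i$; (b) any two numbers in $L_i$ are joined through subsets in $R_j$. Once all four states are shown to be legal, Theorem~\ref{thm: JNW} gives the fibering statement.

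The key input for (a) is a parity count that uses $n\le 2k-2$. There are $\lfloor n/2\rfloor\le k-1$ even numbers and $\lceil n/2\rceil\le k-1$ odd numbers. Hence no $k$-subset consists only of numbers of one parity, and in particular every $\sigma$ contains both an odd and an even number. This settles (a) for all four states. The special vertex $\tau$ also needs attention, since it moves from the "contains $1$" class into $R_1$. It contains the $k-1\ge 2$ consecutive numbers $n-k+2,\dots,n$, so it contains both an odd number $\ge 3$ and an even number.

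For (b) we treat the four states in turn.
\begin{itemize}
\item In $S_{00}$, every vertex of $R_0$ contains $1$, so the subgraph is connected through the vertex $1$. Each odd $j$ lies in some $\sigma\ni 1$ with $\sigma\ne\tau$, because $n>k$ leaves at least two $k$-subsets containing $\{1,j\}$.
\item In $S_{10}$, two even numbers $a,b$ lie together in a $k$-subset containing $\{1,a,b\}$. Such subsets exist since $k\ge 3$, and we can pick one different from $\tau$ because $n>k$ gives more than one choice.
\item In $S_{01}$, two odd numbers $a,b\ge 3$ lie in a common subset avoiding $1$, which exists since $n-1\ge k$. The number $1$ is then attached to the rest through $\tau$, which contains an odd number $\ge 3$.
\item In $S_{11}$, the argument is the same: two even numbers share a subset avoiding $1$.
\end{itemize}

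The main obstacle is keeping track of the boundary cases in these counts. One case is that $\tau$ must be the only $k$-subset removed from $R_0$. Another is that a subset containing $\{1,a,b\}$ avoiding $\tau$ must exist even when $\{a,b\}\subseteq\tau$. Both rely on the strict inequalities $k\ge 3$, $n>k$ and $n<2k-1$, which we expect to check carefully at the end.
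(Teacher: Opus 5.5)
Your proposal is correct and follows the paper's proof essentially step for step. It uses the same partition with $\tau=\{1,n-k+2,\dots,n\}$, the same parity observation, and the same four connectivity checks; using the vertex $1$ as a hub in $S_{00}$ is a harmless simplification.

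One justification needs fixing. In $S_{10}$, the claim that $n>k$ gives more than one $k$-subset containing $\{1,a,b\}$ is false for $k=3$, since then $\binom{n-3}{k-3}=1$. In that case $n=4$, and the unique such subset $\{1,2,4\}$ differs from $\tau=\{1,3,4\}$, so a direct check closes the boundary case you flagged.
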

 
\begin{proof} Fix $k\geq 3$ and $k<n<2k-1$. We use the bipartite system of moves.
Let
\[
L_0 = \{\text{odd numbers}\}, \qquad L_1 = \{\text{even numbers}\},
\]
\[
R_0 = \{\text{$k$-subsets containing } 1\}-\{\tau\}, \qquad R_1 = \{\tau\}\cup\{\text{$k$-subsets not containing }1\},
\]
where $\tau = \{1,n,n-1,\ldots,n-k+2\}$, so that $L_0 \sqcup L_1 =L$ is the set of all number vertices, $R_0 \sqcup R_1 = R$
is the set of all $k$-subset vertices.  See Figure~\ref{fig:small-n}. By Lemma~\ref{lem:bipartite moves states} it suffices to check that each $S_{ij} = L_i\cup R_j$ for $i,j\in \{0,1\}$ spans a connected subgraph of $\Gamma$.

\noindent\emph{Key observation.}
Since $n < 2k-1$, we have $k > \frac{n+1}{2}$, which means $k$ exceeds both
the number of odd elements and the number of even elements in $\{1,\ldots,n\}$.
Hence every $k$-subset of $\{1,\ldots,n\}$ must contain at least one odd and
at least one even number. In particular, \emph{no vertex in $R$ is adjacent only to vertices in $L_0$ or only to vertices in $L_1$}.
 
We first verify that in every state $S_{ij}$ there is a path between any two vertices of $L_i$. 

\noindent\emph{Case 1: $S_{00}$.} Let $x,y\in L_0$, i.e.\ $x,y$ are both odd numbers. Since $k\geq 3$, there exist $k$-subsets $\{1,x,y,...\} \neq \tau$ (when $k=3$, $n=4$, and there is unique such $k$-subset $\{1,2,3\}\neq \tau = \{1,3,4\}$). Thus $x,y$ are both adjacent to that $k$-subset, and therefore they are connected in $S_{00}$.

\noindent\emph{Case 2: $S_{01}$.} Let $x,y\in L_0$, i.e.\ $x,y$ are both odd number other than $1$. There exists a $k$-subset $\{x,y,\ldots\}$ without $1$, so any two odd numbers other than $1$ are connected in $S_{01}$. Note that the vertex $1$ is adjacent to $\tau$, meaning that $1$ is connected in $S_{01}$ to every odd number in ${n,n-1,..,n-k+2}$ (there is at least one since $n>k\geq 3)$. Thus all vertices of $L_0$ are connected in $S_{01}$.

\noindent\emph{Case 3: $S_{1j}$ for $j=0,1$} 
Let $x,y\in L_1$, i.e.\ $x,y$ are both even numbers. Then there exists a $k$-subset containing both of them, with and without $1$. Hence all even numbers are connected in $S_{1j}$.
 
We now show that $S_{ij}$ is connected for $i,j \in \{0,1\}$.
Let $\sigma, \sigma'\in R_j$. By the key observation, each contains at least one odd and one even number. Hence each has at least one neighbor in $L_0$ and at least one neighbor in $L_1$. In particular, $\sigma$ has a neighbor $x\in L_i$ and $\sigma'$ has a neighbor $x'\in L_i$. By above, $x,x'$ are connected in $S_{ij}$, so $\sigma, \sigma'$ are also connected in $S_{ij}$.
\end{proof}
 
\begin{corollary}
For $3\leq k < n < 2k-1$, the right-angled Coxeter group $W_{\Lambda(n,k)}$ virtually
algebraically fibers.
\end{corollary}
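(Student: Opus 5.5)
This corollary follows by combining Theorem~\ref{thm:small-n} with Theorem~\ref{thm: JNW}. For $3\leq k<n<2k-1$, Theorem~\ref{thm:small-n} gives a legal system on $\Lambda(n,k)$. Concretely, it is the bipartite system of moves $\mathcal M=\{L,R\}$ together with the partitions $L=L_0\sqcup L_1$ (odd and even numbers) and $R=R_0\sqcup R_1$ built from the $k$-subset $\tau$. Lemma~\ref{lem:bipartite moves states} then shows that the whole orbit $\{S_{ij}\}$ consists of legal states.

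We feed this into Theorem~\ref{thm: JNW}. The associated diagonal map $f\colon X_{\Lambda}\to S^1$ induces a homomorphism $\chi=f_*\colon W_{\Lambda}'\to\mathbb Z$ whose kernel is finitely generated. To see that $\chi$ is a genuine algebraic fibering, we also need $\chi$ to be nontrivial, and we use Lemma~\ref{lem: nontrivial homomorphism} for this. Every state $S(x)$ for $x\in X_\Lambda^0$ lies in the orbit $\{S_{ij}\}$. Each $S_{ij}$ is legal, so its complement is nonempty, and hence $S(x)$ is a proper subset of $V(\Lambda)$. Lemma~\ref{lem: nontrivial homomorphism} therefore shows that $\chi$ is nontrivial.

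Finally, $W_\Lambda'$ is the kernel of the abelianization $W_\Lambda\to(\mathbb Z/2)^{|V(\Lambda)|}$, so it has finite index $2^{|V(\Lambda)|}$ in $W_\Lambda$. Thus $\chi$ is an algebraic fibering of a finite index subgroup of $W_\Lambda$, which is precisely the statement that $W_{\Lambda(n,k)}$ virtually algebraically fibers.

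There is no real obstacle here: the content lies in Theorem~\ref{thm:small-n} and in the Bestvina--Brady argument behind Theorem~\ref{thm: JNW}. The only point to check explicitly is the nontriviality of $\chi$, and as shown above it follows immediately from legality of the states.
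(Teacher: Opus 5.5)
Your proof is correct and matches the paper, where the corollary is stated without separate proof as the immediate combination of Theorem~\ref{thm:small-n} with Theorem~\ref{thm: JNW}. Your explicit nontriviality check via Lemma~\ref{lem: nontrivial homomorphism} and your finite-index remark on $W_\Lambda'$ are already covered by the statement of Theorem~\ref{thm: JNW}, so they are redundant but harmless.
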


\subsection{The case of $n=2k-1$} 
Fix $k \geq 3$, and let $n = 2k-1$. By Corollary~\ref{cor: problematic vertex in Lambda} $\Lambda(2k-1, k)$ does not admit a legal system. We now show that a vertex link double of $\Lambda(2k-1, k)$  admits a legal system.

\begin{figure}
\centering
\begin{tikzpicture}[scale =0.9,
    on2/.style={draw=black, fill=green!15, rounded corners,
               minimum width=1cm, minimum height=0.7cm, font=\small},
    off2/.style={draw=black, fill=blue!15, rounded corners,
               minimum width=1cm, minimum height=0.7cm, font=\small},
    on1/.style={draw=black, fill=red!15, rounded corners,
                minimum width=1cm, minimum height=0.7cm, font=\small},
    off1/.style={draw=black, fill=orange!15, rounded corners,
                minimum width=1cm, minimum height=0.7cm, font=\small},
    arb/.style={draw=black, fill=violet!15, rounded corners,
                minimum width=3cm, minimum height=0.75cm, font=\small,
                align=center},
    col/.style={font=\small\bfseries, align=center},
    >=Stealth, thick
]
 
\def\ca{0}
\def\cb{3.2}
\def\cc{6.4}
\def\cd{9.6}
\def\ce{12.8}
 
\node[col] at (\ca, 4.6) {$C_1$
};
\node[col] at (\cb, 4.6) {$C_2$
};
\node[col] at (\cc, 4.6) {$C_3$
};
\node[col] at (\cd, 4.6) {$C_4$
};
\node[col] at (\ce, 4.6) {$C_5$
};
 
\node[on1]  (A1) at (\ca, 3.6) {$\tau^0 = \{2,\ldots,k{+}1\}^0$};
\node[off1] (A2) at (\ca, 2.4) {all other $k$-subsets$^0$};
\node[on1]  (A3) at (\ca, 1.2) {$\rho^0 = \{k,\ldots,2k{-}1\}^0$};
 
\node[on2] (B1) at (\cb, 3.3) {$A^0$};
\node[off2]  (B2) at (\cb, 1.8) {$B^0$};
 
\node[off1]  (C1) at (\cc, 3.8) {$|\sigma\cap B|>|\sigma\cap A|$};
\node[arb] (C2) at (\cc, 2.7) {$|\sigma\cap B|=|\sigma\cap A|$};
\node[on1] (C3) at (\cc, 1.6) {$|\sigma\cap A|>|\sigma\cap B|$};
 
\node[off2]  (D1) at (\cd, 3.3) {$A^1$};
\node[on2] (D2) at (\cd, 1.8) {$B^1$};
 
\node[on1]  (E1) at (\ce, 3.6) {$\tau^1 = \{2,\ldots,k{+}1\}^1$};
\node[off1] (E2) at (\ce, 2.4) {all other $k$-subsets$^1$};
\node[on1]  (E3) at (\ce, 1.2) {$\rho^1 = \{k,\ldots,2k{-}1\}^1$};
 
\draw (A1.east) -- (B1.west);
\draw (A1.east) to (B2.west);
\draw (A2.east) -- (B1.west);
\draw (A2.east) to (B2.west);
\draw (A3.east) -- (B2.west);
\draw (A3.east) to (B1.west);
 
\draw (B1.east) -- (C3.west);
\draw (B2.east) -- (C1.west);
\draw (B1.east) --(C1.west);
\draw (B1.east) --(C2.west);
\draw (B2.east) --(C3.west);
\draw (B2.east) --(C2.west);
 
\draw (C1.east) -- (D1.west);
\draw (C3.east) -- (D2.west);
\draw (C1.east) --(D2.west);
\draw (C2.east) --(D1.west);
\draw (C2.east) --(D2.west);
\draw (C3.east) --(D1.west);
 
\draw (D1.east) -- (E1.west);
\draw (D1.east) to (E3.west);
\draw (D2.east) -- (E3.west);
\draw (D2.east) to (E1.west);
\draw (D1.east) to (E2.west);
\draw (D2.east) to (E2.west);
 
\begin{scope}[shift={(5,0)}]
\node[on1, minimum width=0.8cm, minimum height=0.45cm] at (-2.25, 0) {};
\node[right, font=\small] at (-2.65, 0) {$R_0$};
\node[off1, minimum width=0.8cm, minimum height=0.45cm] at (-0.75, 0) {};
\node[right, font=\small] at (-1.15, 0) {$R_1$};
\node[on2, minimum width=0.8cm, minimum height=0.45cm] at (0.75, 0) {};
\node[right, font=\small] at (0.35, 0) {$L_0$};
\node[off2, minimum width=0.8cm, minimum height=0.45cm] at (2.25, 0) {};
\node[right, font=\small] at (1.85, 0) {$L_1$};
\node[arb, minimum width=1.4cm, minimum height=0.45cm]  at (4.15, 0) {};
\node[right, font=\small] at (3.27, 0) {see text};
 \end{scope}

\end{tikzpicture}
\caption{The structure of the vertex link double of $\Lambda(2k-1, k)$. 
}
\label{fig:columns}
\end{figure}
 
\begin{theorem}\label{thm:main}
For $k \geq 3$, the graph $\Gamma$ obtained by vertex link doubling $\Lambda(2k-1,k)$
along any number vertex admits a legal system.
\end{theorem}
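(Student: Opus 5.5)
By the symmetry of $\Lambda(2k-1,k)$ under permutations of $\{1,\dots,2k-1\}$, we may double along the number vertex $1$. We use the bipartite system of moves on $\Gamma$: by Proposition~\ref{prop: graph operation bipartite}, $\Gamma$ is bipartite with $L$ the number vertices and $R$ the subset vertices.

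\textbf{Structure of $\Gamma$.} Following Figure~\ref{fig:columns}, $\Gamma$ consists of five columns:
\begin{itemize}
\item $C_1$ and $C_5$: the two copies $\sigma^0,\sigma^1$ of the $k$-subsets avoiding $1$, i.e.\ the $k$-subsets of $\{2,\dots,2k-1\}$;
\item $C_2$ and $C_4$: the two copies of the numbers $2,\dots,2k-1$;
\item $C_3$: the single shared copy of the $k$-subsets containing $1$, identified with the $(k-1)$-subsets $\sigma$ of $\{2,\dots,2k-1\}$.
\end{itemize}
Split $\{2,\dots,2k-1\}=A\sqcup B$ with $|A|=|B|=k-1$. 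We take $A=\{2,\dots,k\}$ and $B=\{k+1,\dots,2k-1\}$, so that $\tau=\{2,\dots,k+1\}$ and $\rho=\{k,\dots,2k-1\}$ each meet both $A$ and $B$. Set
\[
L_0=A^0\cup B^1,\qquad L_1=B^0\cup A^1 .
\]
Let $R_0$ consist of $\tau^0,\rho^0,\tau^1,\rho^1$ together with the middle sets $\sigma$ with $|\sigma\cap A|>|\sigma\cap B|$. Let $R_1$ consist of all other copies in $C_1,C_5$ together with the middle sets with $|\sigma\cap B|>|\sigma\cap A|$. The tied middle sets (these occur only when $k-1$ is even) are distributed by a fixed rule, e.g.\ according to whether $2\in\sigma$. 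This makes the picture symmetric under swapping $A\leftrightarrow B$ and copy $0\leftrightarrow 1$, which exchanges $S_{00}$ with itself up to relabeling and $S_{01}$ with $S_{10}$. By Lemma~\ref{lem:bipartite moves states} it then suffices to check that each $S_{ij}=L_i\cup R_j$ is nonempty and connected, and the symmetry roughly halves the casework.

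\textbf{Key observation.} Every $k$-subset of the $(2k-2)$-set $A\cup B$ meets both $A$ and $B$, since $k>k-1$. Hence every vertex of $C_1\cup C_5$ has neighbours in both $L_0$ and $L_1$. A middle set $\sigma$ is adjacent to $A^0,A^1$ through $\sigma\cap A$ and to $B^0,B^1$ through $\sigma\cap B$; a majority set always has a neighbour in its majority part. So, as in the proof of Theorem~\ref{thm:small-n}, once the number vertices of $L_i$ are shown to lie in one component of $S_{ij}$, every subset vertex of $R_j$ attaches to that component.

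\textbf{Connecting $L_i$ inside $S_{ij}$.} This is done case by case.
\begin{itemize}
\item $S_{01}$ and $S_{10}$ are easy, because $R_1$ contains almost all of $C_1$ and $C_5$. Any two elements of $A^0$ (or of $B^0$) lie in a common $k$-subset of $A\cup B$ different from $\tau,\rho$. The two sides $A^0$ and $B^1$ are joined through middle sets of the appropriate majority that contain elements of both $A$ and $B$.
\item $S_{00}$ (and symmetrically $S_{11}$) is the delicate case. The only $R_0$-neighbours of $C_1$ and $C_5$ are $\tau^i,\rho^i$, so $A^0$ must be connected through $A$-majority middle sets, which is fine since $k-1\ge 2$ lets any two elements of $A$ lie in one. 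The copy $B^1$ is also connected using $A$-majority middle sets containing two given elements of $B$; these exist only when $k-1\ge 5$. For small $k$ (say $3\le k\le 5$) we instead use $\tau^1\supseteq\{k+1\}$ and $\rho^1\supseteq B$ to link all of $B^1$, and use an $A$-majority middle set meeting $B$ (or the tie rule) to join $A^0$ to $B^1$.
\end{itemize}

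\textbf{Main obstacle.} I expect the hardest step to be these small values of $k$ (especially $k=3,4$) together with the tie rule when $k-1$ is even. If the tie rule fails for some small $k$, I would adjust which of the four special vertices $\tau^i,\rho^i$ lie in $R_0$, or verify $k=3,4$ directly by hand.
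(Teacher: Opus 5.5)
Your construction is the paper's: the same $L_0=A^0\cup B^1$, $L_1=A^1\cup B^0$ and the same $R_0,R_1$, differing only in how tied middle sets are split. Your common-neighbour checks for $S_{00}$ and $S_{01}$ are also essentially the paper's. The gap is the symmetry reduction, which is wrong as stated. Reversing $\{2,\dots,2k-1\}$ (so $A\leftrightarrow B$, $\tau\leftrightarrow\rho$) and swapping copies fixes $L_0$, $L_1$ and $\{\tau^0,\rho^0,\tau^1,\rho^1\}$. But it sends $A$-majority middle sets to $B$-majority ones, so it preserves neither $R_0$ nor $R_1$ and is not a symmetry of the orbit. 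The claim is also internally inconsistent: a relabeling that fixes $S_{00}=L_0\cup R_0$ must fix $L_0$ and $R_0$, so it cannot exchange $S_{01}$ with $S_{10}$. As a result, your ``easy'' argument for $S_{10}$ does not apply. Since $S_{10}=L_1\cup R_0$, it meets $C_1\cup C_5$ only in $\tau^i,\rho^i$, not in ``almost all'' of those columns. Likewise, $S_{11}$ is not the mirror image of $S_{00}$.

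The repair is to use the copy swap $x^0\leftrightarrow x^1$ alone. It is an automorphism of the link double fixing $C_3$ pointwise (so it is compatible with any tie rule). It fixes $R_0$ and $R_1$ and exchanges $L_0\leftrightarrow L_1$, so $S_{00}\cong S_{10}$ and $S_{01}\cong S_{11}$. The two cases you actually worked out therefore suffice.

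In $S_{00}$ you can drop the case split in $k$. Since $A\subseteq\tau$ and $B\subseteq\rho$, the vertex $\tau^0$ is adjacent to all of $A^0$ and $\rho^1$ to all of $B^1$, for every $k$; this is what the paper uses. Your assertion that $A^0$ ``must'' be connected through middle sets is therefore false, though harmless.

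Your tie rule works, so the fallback in your last paragraph is unnecessary. For odd $k\ge 5$ the ties are irrelevant: majority middle sets meeting both $A$ and $B$ exist on both sides and join $A^i$ to $B^{1-i}$. For $k=3$, any split with $T_0,T_1$ both nonempty works, because every tie $\{1,a,b\}$ is adjacent to $a^0,a^1,b^0,b^1$. The paper takes $T_0=\{\{1,2,4\},\{1,3,5\}\}$; your $\{\{1,2,4\},\{1,2,5\}\}$ works equally well.
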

 
\begin{proof}
Fix $n = 2k-1$, $k \geq 3$. Without loss of generality we  take a vertex link double along the vertex $1$. We split the vertices of the resulting graph $\Gamma$ in five sets  $C_1, C_2, C_3, C_4, C_5$ as in Figure~\ref{fig:columns}, with edges only between vertices in adjacent sets.
\begin{itemize}
    \item $C_1 = \{\sigma^0 \mid \sigma\subseteq \{2, \dots, 2k-1\}, |\sigma| =k\}$, i.e.\ one copy of the $k$-subsets of $\{2, \dots, 2k-1\}$;
    \item $C_2 = \{2^0,\ldots,2k-1^0\}$;
    \item  $C_3 = \{\sigma \mid 1\in \sigma\subseteq \{1, \dots, 2k-1\}, |\sigma| =k\}$, i.e. the $k$-subsets of $\{1,2,...,2k-1\}$ containing $1$;
    \item $C_4 = \{2^1,\ldots,2k-1^1\}$;
    \item $C_5 = \{\sigma^1 \mid \sigma\subseteq \{2, \dots, 2k-1\}, |\sigma| =k\}$, i.e.\ the other copy of the $k$-subsets of $\{2, \dots, 2k-1\}$.
\end{itemize}
By Proposition~\ref{prop: graph operation bipartite} $\Gamma$ is bipartite, with the bipartite structure $L\sqcup R$ where $L = C_2\cup C_4$ and $R = C_1\cup C_3 \cup C_5$. We use the bipartite system of moves. We now construct an explicit orbit of states $L_i\cup R_j$ (see Figure~\ref{fig:columns} for one of them) and verify that each state is legal.
 
Write $A = \{2, \ldots, k\}$ and
$B = \{k+1, \ldots, 2k-1\}$, so $\{2,\ldots,2k-1\} = A \sqcup B$ with
$|A| = |B| = k-1$. We set $L_0 = A^0\cup B^1$ and $L_1 = A^1\cup B^0$. Clearly $L_0 \sqcup L_1 = L$.

Let $\tau = \{2,3,\ldots,k+1\}$ and $\rho = \{k,k+1,\ldots,2k-1\}$. 
We set
$$R_0 = \{\tau^0, \rho^0, \tau^1, \rho^1\} \cup \{\sigma \in C_3 : |\sigma\cap A|>|\sigma\cap B|\} \cup T_0$$ 
and 
$$R_1 = (C_{1}-\{\tau^0, \rho^0\}) \cup (C_{5}-\{\tau^1, \rho^1\}) \cup \{\sigma \in C_3 : |\sigma\cap A|<|\sigma\cap B|\} \cup T_1,$$
where $T_0 = T = \{\sigma \in C_3: |\sigma\cap A| = |\sigma \cap B|\}$ and $T_1 =\emptyset$ when $k>3$ (note that $T=\emptyset$ for $k$ even). When $k=3$, we set $T_0 = \{\{1,2,4\},\{1,3,5\}\}$ and $T_1 = \{\{1,2,5\},\{1,3,4\}\}$.
It is clear that $R_0\sqcup R_1 = R$.

By Lemma~\ref{lem:bipartite moves states} it suffices to show that each $S_{ij} = L_i\cup R_j$ for $i,j\in\{0,1\}$ spans a connected subgraph of $\Gamma$.

Since only vertices in  adjacent columns are joined by edges, we establish connectivity
within each column via paths through neighboring columns.

\medskip
\noindent\emph{Step 1: Connecting vertices from the same column $C_2$ or $C_4$}
We first check that in each state $S_{ij}$, any two vertices either from column $C_2$ or from column $C_4$ are connected to one another through paths in $S_{ij}$.
Since $A=\{2,\dots, k\}\subseteq \tau$, the vertices of $A^i$ are all adjacent to $\tau^i$. In particular, any two vertices of $A^i$ are connected in $S_{i0}$.

Any two vertices of $A_i$ have a common neighbor $\sigma \in R_1$ where $1\notin \sigma$ (so $\sigma\in C_1$ or $C_5$). This shows that any two vertices of $A^i$ are connected in $S_{i1}$.

Similarly, the vertices in $B^j$ are all adjacent to $\rho^j$, and therefore connected in $S_{(1-j)0}$.
Also similarly, any two vertices of $B_j$ have a common  neighbor $\sigma \in R_1$ where $1\notin \sigma$, and so are connected in $S_{(1-j)1}$.

\medskip
\noindent\emph{Step 2: Connecting vertices from the same column $C_1$ or $C_5$}
Since $\tau\cap\rho = \{k,k+1\}$, for $i=0,1$, the vertices $\tau^i$ and $\rho^i$ are both adjacent to the vertex $k^i\in A^i$, and they are both adjacent to the vertex $(k+1)^i\in B^i$. Thus $\tau^i, \rho^i\in R_0$ are connected in both $S_{00}$ and $S_{10}$.

Every $k$-subset $\sigma$ of the $2k-2$ element set $\{2,\ldots,2k-1\}$ must contain at least one element of $A$ and at least one element of $B$. Hence every vertex $\sigma^0 \in R_1\cap C_1$ is adjacent
to some element of $A^0$, and is also adjacent to some element of $B^0$. 
Since by previous step any two vertices of $A^0$ are connected in $S_{01}$, we deduce that any two vertices of $R_1\cap C_1$ are connected in $S_{01}$. Similarly we argue that any two vertices of $R_1\cap C_1$ are connected in $S_{11}$. 
The same argument applies to vertices of $R_1\cap C_5$.
\medskip

Thus we have shown that the restriction of $S_{ij}$ to $C_1\cup C_2$ or to $C_4\cup C_5$ spans a connected subgraph. We now consider the middle column $C_3$ to show that the entire $S_{ij}$ spans a connected subgraph.

\medskip
\noindent\emph{Step 3: Connecting vertices from the column $C_3$.}
Every $k$-subset $\sigma$ containing $1$ contains $k-1\geq 2$ more elements, so since $L_0 = A_0 \cup B_1$ and $L_1 = A_1 \cup B_0$, each $\sigma\in C_3$ is adjacent to some vertex of $L_0$ and to some vertex of $L_1$. 
Since $|A| = |B| = k-1$, $\sigma_A = A\cup \{1\}$ and $\sigma_{B} = B\cup\{1\}$ are vertices in $C_3$. Note that these are the only vertices of $C_3$ which are not adjacent to at least one vertex in each of $A^0, B^0, A^1, B^1$.

When $k>3$, then $C_3\cap R_j$ contains at least one vertex other than $\sigma_A$, $\sigma_B$. In particular, this vertex connects previously considered connected subgraphs of $S_{ij}$. Moreover, $\sigma_A$ shares a common neighbor in each $A^i$ with some other vertex in $C_3\cap R_0$, and similarly $\sigma_B$ shares a common neighbor in each $B^i$ with some other vertex in $C_3\cap R_1$.
This proves that each $S_{ij}$ spans a connected subgraph of $\Gamma$, when $k>3$.

Finally, we consider the case of $k=3$, which is illustrated in Figure~\ref{fig:k=3,n=5}. We directly check that each $S_{ij}$ restricted to $C_{2}\cup C_3 \cup C_4$ spans a connected subgraph.
\begin{figure}
\centering
\begin{tikzpicture}[
 superthick/.style={line width=2.5pt},
    v/.style={draw=black, fill=blue!15, circle, minimum size=0.1cm, font=\small},
    >=Stealth, thick
    on2/.style={draw=black, fill=green!15, rounded corners,
               minimum width=1cm, minimum height=0.7cm, font=\small},
    off2/.style={draw=black, fill=blue!15, rounded corners,
               minimum width=1cm, minimum height=0.7cm, font=\small},
    on1/.style={draw=black, fill=red!15, rounded corners,
                minimum width=1cm, minimum height=0.7cm, font=\small},
    off1/.style={draw=black, fill=orange!15, rounded corners,
                minimum width=1cm, minimum height=0.7cm, font=\small},
    arb/.style={draw=black, fill=violet!15, rounded corners,
                minimum width=2.5cm, minimum height=0.75cm, font=\small,
                align=center},
    col/.style={font=\small\bfseries, align=center},
    >=Stealth, thick
]
 
\def\ca{0}
\def\cb{3.2}
\def\cc{6.4}
\def\cd{9.6}
\def\ce{12.8}

\node[v, fill=blue!15] (50) at (\cb, 0) {$5^0$};
\node[v, fill=blue!15] (40) at (\cb, 1) {$4^0$};
\node[v, fill=green!15] (30) at (\cb, 2) {$3^0$};
\node[v, fill=green!15] (20) at (\cb, 3) {$2^0$};

\node[on1]  (123) at (\cc, 4) {$\{1,2,3\}$};
\node[on1]  (124) at (\cc, 3) {$\{1,2,4\}$};
\node[off1]  (125) at (\cc, 2) {$\{1,2,5\}$};
\node[off1]  (134) at (\cc, 1) {$\{1,3,4\}$};
\node[on1]  (135) at (\cc, 0) {$\{1,3,5\}$};
\node[off1]  (145) at (\cc, -1) {$\{1,4,5\}$};

\node[v, fill=green!15] (51) at (\cd, 0) {$5^1$};
\node[v, fill=green!15] (41) at (\cd, 1) {$4^1$};
\node[v, fill=blue!15] (31) at (\cd, 2) {$3^1$};
\node[v, fill=blue!15] (21) at (\cd, 3) {$2^1$};

\draw (20.east) -- (123.west);
\draw (20.east) -- (124.west);
\draw (20.east) --(125.west);
\draw (30.east) -- (123.west);
\draw (30.east) -- (134.west);
\draw (30.east) --(135.west);
\draw[superthick] (40.east) -- (124.west);
\draw (40.east) -- (134.west);
\draw (40.east) --(145.west);
\draw (50.east) -- (125.west);
\draw[superthick] (50.east) -- (135.west);
\draw (50.east) --(145.west); 
\draw[superthick] (21.west) -- (123.east);
\draw[superthick] (21.west) -- (124.east);
\draw (21.west) --(125.east);
\draw[superthick] (31.west) -- (123.east);
\draw (31.west) -- (134.east);
\draw[superthick] (31.west) --(135.east);
\draw (41.west) -- (124.east);
\draw (41.west) -- (134.east);
\draw (41.west) --(145.east);
\draw (51.west) -- (125.east);
\draw (51.west) -- (135.east);
\draw (51.west) --(145.east); 
 
\end{tikzpicture}
\caption{The thickened edges belong to the connected subgraph spanned by $S_{10}$. One can directly check that $S_{00}, S_{01}, S_{11}$ also span connected subgraphs.}
\label{fig:k=3,n=5}
\end{figure}
\end{proof}

\begin{corollary}
For $k\geq 3$, the right-angled Coxeter group $W_{\Lambda(2k-1,k)}$ virtually
algebraically fibers.
\end{corollary}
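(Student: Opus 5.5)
The corollary follows by combining Theorem~\ref{thm:main} with Theorem~\ref{thm: JNW} and the finite index embedding coming from the graph operation. Let $\Gamma$ be the vertex link double of $\Lambda = \Lambda(2k-1,k)$ along the number vertex $1$.

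\emph{Step 1.} By Theorem~\ref{thm:main}, $\Gamma$ admits a legal system: the bipartite system of moves together with the explicit orbit of states $\{L_i\cup R_j\}$ constructed there.

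\emph{Step 2.} Theorem~\ref{thm: JNW} applied to $\Gamma$ gives a homomorphism $\chi\colon W_\Gamma'\to\mathbb Z$ with finitely generated kernel. To be safe that $\chi$ is non-trivial, I would also invoke Lemma~\ref{lem: nontrivial homomorphism}. Every state $S(x)$ in the orbit is legal, so its complement is nonempty. Hence every $S(x)$ is a proper subset of $V(\Gamma)$, and so $\chi$ is non-trivial. Thus $W_\Gamma'$ algebraically fibers.

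\emph{Step 3.} $W_\Gamma'$ has finite index in $W_\Gamma$, since it is the kernel of a map to the finite group $(\mathbb Z/2)^{|V(\Gamma)|}$. By the Proposition on graph operations, $W_\Gamma$ embeds as a finite index subgroup of $W_\Lambda$; concretely it is the kernel of the retraction onto $\langle 1\rangle\cong\mathbb Z/2$, so the index is $2$. Indices multiply, so the image of $W_\Gamma'$ is a finite index subgroup of $W_\Lambda$ that admits a non-trivial homomorphism to $\mathbb Z$ with finitely generated kernel. This is exactly the statement that $W_{\Lambda(2k-1,k)}$ virtually algebraically fibers.

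There is no real obstacle here. The only point requiring care is the non-triviality of $\chi$, which is handled by the legality of the states as in Step 2; all the combinatorial work is contained in Theorem~\ref{thm:main}.
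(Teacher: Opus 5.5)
Your proposal is correct and follows the same route the paper intends: Theorem~\ref{thm:main} gives a legal system on the vertex link double $\Gamma$, Theorem~\ref{thm: JNW} then gives an algebraic fibering of $W_\Gamma'$, and the index-$2$ embedding $W_\Gamma\hookrightarrow W_{\Lambda(2k-1,k)}$ makes $W_\Gamma'$ a finite index subgroup of $W_{\Lambda(2k-1,k)}$. Your separate non-triviality check via Lemma~\ref{lem: nontrivial homomorphism} is valid, though Theorem~\ref{thm: JNW} as stated already includes it.
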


\section{Obstruction for $n \geq 2k$}\label{sec: obstructions}
\subsection{Problematic vertices}
\begin{definition}
Let $\Gamma = (V,E)$ be a simplicial graph, and let $S\subseteq V$ be a state. A vertex $u \in V(\Gamma)$ is \emph{problematic with respect to $S$} if all neighbors of $u$ lie in $S$, or all lie in $V-S$. 
\end{definition}

\begin{lemma}\label{lem: problematic vertex obstruction}
 Let $\Gamma$ be a graph where for every $v\in V(\Gamma)$ there exist at least two vertices distinct from $v$ which are not contained in $N(v)$.
If for every state $S$ of $\Gamma$, there exists a problematic vertex with respect to $S$, then $\Gamma$ does not admit a legal system.
\end{lemma}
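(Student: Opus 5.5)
We argue by contradiction. Suppose $\mathcal M = \{M_v : v\in V(\Gamma)\}$ is a legal system, and let $\mathcal S$ be an orbit all of whose states are legal. Choose any $S\in\mathcal S$. By hypothesis there is a vertex $u$ that is problematic with respect to $S$. Replacing $S$ by its complement if needed, we may assume $N(u)\subseteq V-S$, so that no neighbour of $u$ lies in $S$. The complement is harmless here: legality of a state is symmetric under complementation, and $u$ is problematic for $S$ if and only if it is problematic for $V-S$. Still, we must take care to work with states that actually lie in the orbit $\mathcal S$.

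\emph{Case $u\in S$.} Since $S$ is legal, the subgraph spanned by $S$ is connected. Every neighbour of $u$ lies outside $S$, so connectedness forces $S=\{u\}$. Now apply the move $M_u$. It contains $u$ and no neighbour of $u$, so the state $S' = S\Delta M_u = M_u - \{u\}$ still lies in $\mathcal S$ and is therefore legal. In particular $S'\neq\emptyset$, and $u\notin S'$.

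\emph{Case $u\notin S$.} Then $u\in V-S$ and all neighbours of $u$ lie in $V-S$ as well. Apply $M_u$ to obtain $S'=S\Delta M_u$. Now $u\in S'$, and the neighbours of $u$ are untouched, so they remain outside $S'$. Thus $u$ is an isolated vertex of the subgraph spanned by $S'$. Since $S'$ is legal, this subgraph is connected, so $S'=\{u\}$. We are then back in the first case, applied to $S'$.

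So in either case some state of $\mathcal S$ equals $\{u\}$. Its complement $V-\{u\}$ must also span a connected subgraph. This alone does not give a contradiction, and here the degree hypothesis must enter. Take $T=\{u\}\in\mathcal S$ and apply the move $M_w$ at a vertex $w\ne u$ with $w\notin N(u)$; the hypothesis gives at least two such $w$. The state $T\Delta M_w$ contains $u$, because $u\in M_w$ would mean $T\Delta M_w$ omits $u$; consider both options. Every state in the orbit must be legal, so each must again be a singleton or have a problematic vertex. The idea is that the hypothesis applies to every state of the orbit, and the argument above shows the following: whenever a state has a problematic vertex $u$, the orbit contains the singleton $\{u\}$, and then also $M_u\Delta\{u\} = M_u-\{u\}$.

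Combining these, for each state we will extract singletons $\{u\}$ and $\{u'\}$ in the orbit, with $u'$ problematic for $\{w\}$-type states. Two singleton states $\{a\},\{b\}$ in one orbit differ by the product of moves $\{a,b\}$ (when $a\ne b$). The step I expect to be the main obstacle is closing this loop: from $\{u\}\in\mathcal S$, show that some state of the orbit is forced to be a singleton $\{x\}$ whose complement is disconnected, or is empty. My first attempt will use the legal state $\{u\}$ itself. Its complement $V-\{u\}$ spans a connected subgraph. Then apply $M_w$ for each of the two non-neighbours $w$. Each resulting state $\{u\}\Delta M_w$ contains $w$ and no neighbour of $w$ other than possibly $u$, which is not adjacent to $w$. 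So $w$ is isolated in $\{u\}\Delta M_w$, forcing this state to equal $\{w\}$, and hence $M_w=\{u,w\}$ for both choices $w_1,w_2$. Then $\{u\}\Delta M_{w_1}\Delta M_{w_2}=\{u\}$, consistent so far. I will instead look at $M_{w_1}$ applied to $\{w_2\}$: this gives $\{u,w_1,w_2\}$ minus overlaps, which equals $\{w_1,w_2,u\}\Delta\{u\}$, i.e.\ $\{w_1,w_2\}$. Here $w_1$ and $w_2$ are non-adjacent (one must check this, or use a different pair), so this state is disconnected, which is the desired contradiction.
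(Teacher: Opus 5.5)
Your approach is the paper's argument with $S$ replaced by its complement. The paper forces $S=V-\{v\}$; you force $\{u\}\in\mathcal S$. Your deduction that $w$ is isolated in $\{u\}\Delta M_w$, so $\{u\}\Delta M_w=\{w\}$ and $M_w=\{u,w\}$ for each non-neighbour $w\neq u$, is exactly the paper's claim $M_{u_i}=\{v,u_i\}$. To justify the complement normalization, note that $\{V-T : T\in\mathcal S\}$ is again an orbit of legal states, since $(V-T)\Delta M=V-(T\Delta M)$.

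The gap is in the final step, where you miscompute two symmetric differences. The vertex $u$ lies in each of $\{u\}$, $M_{w_1}=\{u,w_1\}$ and $M_{w_2}=\{u,w_2\}$. Hence $\{u\}\Delta M_{w_1}\Delta M_{w_2}=\{u,w_1,w_2\}$, not $\{u\}$. Likewise $\{w_2\}\Delta M_{w_1}=\{w_2\}\Delta\{u,w_1\}=\{u,w_1,w_2\}$, which is the same state and not $\{w_1,w_2\}$: nothing cancels $u$. So you draw your contradiction from a state $\{w_1,w_2\}$ that you have not shown lies in $\mathcal S$.

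That contradiction also needs $w_1,w_2$ to be non-adjacent, which the hypotheses do not give. The $5$-cycle satisfies both hypotheses of the lemma. Each vertex has exactly two non-neighbours. If no vertex were problematic, membership in $S$ would have to alternate along the closed walk $0,2,4,1,3,0$ of odd length, which is impossible. Yet in the $5$-cycle the two non-neighbours of any vertex are adjacent, so ``using a different pair'' is not available.

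The correct finish is immediate. The state $\{u,w_1,w_2\}\in\mathcal S$ contains $u$ and two other vertices, neither adjacent to $u$. So $u$ is isolated in it, the subgraph it spans is disconnected, and this is the contradiction. It is precisely the paper's last step, where $\{v,u_1,u_2\}$ is disconnected because $v$ is isolated in it.

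You should also delete the exploratory middle paragraph, which contains false assertions. For instance, $T\Delta M_w$ does not contain $u$, since $M_w=\{u,w\}$.
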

\begin{proof}
    Suppose $\{M_v:v\in V(\Gamma)\}$ is a system of moves, and $\mathcal S$ is an orbit of legal states. Take a state $S\in \mathcal S$, and let $v$ be a problematic vertex with respect to $S$. 
    This means that all neighbors of $v$ are either in $S$ or $V(\Gamma)-S$. Without loss of generality, assume $N(v)\subseteq S$.
    Since, $N(v)\cap M_v=\emptyset$, we also have $N(v)\subseteq M_v\cdot S$. But $v$ belongs to only one of $S, M_v\cdot S$, without loss of generality assume $v\notin S$. 
    Now note that if there exists another vertex $v'\notin S$, then there is no path between $v$ and $v'$ which only passes through vertices of $V-S$. Thus all other vertices of $V(\Gamma)$ are in $S$. 
    By assumption there exist $u_1, u_2\notin \{v\}\cup N(v)$. For each $i=1,2$ we claim that $M_{u_i} = \{v,u_i\}$. Indeed, since $u_i\notin N(v)$, all the neighbors of $u_i$ are in $S$. Thus $u_i\notin M_{u_i}\cdot S$ but $N(u_i) \subseteq S$, which again implies that $V-M_{u_i}\cdot S = \{u_i\}$, and so $M_{u_i} = \{v,u_i\}$. Thus $M_{u_1}M_{u_2}\cdot S = V-\{v, u_1, u_2\}$ which is not legal, as the graph on $\{v, u_1, u_2\}$ is not connected.
\end{proof}

\begin{remark}
Lemma~\ref{lem: problematic vertex obstruction} does not assume that the system of moves is bipartite. We note however that for a bipartite system of moves and a state $S$, a vertex $u$ is problematic with respect to $S$ if and only if $u$ is problematic with respect to any state in the orbit of $S$.
\end{remark}

The assumptions of Lemma~\ref{lem: problematic vertex obstruction} are satisfied in every bipartite graph where each side of the partition has at least three vertices. In particular, we can apply it to $\Lambda(n,k)$ with $k\geq 3$ and $n\geq 2k-1$.

\begin{corollary}[{See also \cite[Prop 6.1]{JankiewiczNorinWise16}}]\label{cor: problematic vertex in Lambda} Let $k\geq 3$ and $n\geq 2k-1$. For every state $S$ of $\Lambda(n,k)$ there exists a problematic vertex $r_0\in R$. 
In particular, $\Lambda(n,k)$  does not admit a legal system.
\end{corollary}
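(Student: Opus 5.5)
The statement follows from Lemma~\ref{lem: problematic vertex obstruction} once we produce, for every state $S$, a problematic vertex in $R$. So the plan is a pigeonhole argument on the number vertices. First check the hypothesis of the lemma. Each number vertex $i$ has at least two $k$-subsets not containing it, since $n-1\ge k$ gives $\binom{n-1}{k}\ge 1$; in any case it has other number vertices that are not neighbors, and $n\ge 2k-1\ge 5$. Each $k$-subset vertex $\sigma$ is non-adjacent to the other $k$-subsets, of which there are at least two since $\binom{n}{k}\ge 3$. So for every vertex there are at least two vertices distinct from it and outside its neighborhood.

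Now fix an arbitrary state $S\subseteq V(\Lambda)$. Split the number vertices into $L\cap S$ and $L- S$, whose sizes add to $n\ge 2k-1$. By pigeonhole, one of the two sets has at least $k$ elements. Let $r_0$ be any $k$-subset contained in that set. Then $N(r_0)=r_0$ lies entirely in $S$ or entirely in $V-S$, so $r_0\in R$ is problematic with respect to $S$.

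Since $S$ was arbitrary, Lemma~\ref{lem: problematic vertex obstruction} applies and shows that $\Lambda(n,k)$ admits no legal system. There is essentially no obstacle here. The only point needing care is the non-adjacency hypothesis of the lemma, which uses $k\ge 3$ and $n>k$ to guarantee enough vertices; the pigeonhole step is where $n\ge 2k-1$ is used, and it is sharp.
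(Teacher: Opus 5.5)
Your proof is correct and is essentially the paper's argument: pigeonhole on $L\cap S$ versus $L-S$ to find a $k$-subset lying entirely on one side, which is then problematic, followed by Lemma~\ref{lem: problematic vertex obstruction}. The paper checks that lemma's hypothesis by noting that each side of the bipartition has at least three vertices; one small slip in your version is that $\binom{n-1}{k}\ge 1$ alone does not give two non-neighbouring $k$-subsets, but your fallback via the other $n-1\ge 4$ number vertices (or the bound $\binom{n-1}{k}\ge k+1$) covers it.
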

\begin{proof}
Since $n\geq 2k-1$, for every state $S$ at least $k$ vertices from $L$ are either in $S$ or in $V(\Gamma)-S$. Thus there exists a problematic $k$-subset vertex $r_0\in R$ with respect to $S$. By Lemma~\ref{lem: problematic vertex obstruction} $\Lambda(n,k)$ does not admit a legal system.
\end{proof}

\begin{definition}\label{def:H}
Let $\Gamma = (V,E)$ be a bipartite graph with the partition $V(\Gamma) = L\sqcup R$. For a state $S\subseteq V$, write $P_R(S) \subseteq
V(\Gamma)$ for the set of \emph{problematic $R$-vertices} of $S$, i.e.\ the vertices in $R$ which are 
problematic with respect to $S$. 

We say $\Gamma$ has
\emph{dispersed problematic $R$-vertices}
if for every state $S$ of $\Gamma$, no vertex
$u \in V(\Gamma)$ is adjacent to every element of $P_R(S)$, i.e.
\[
P_R(S) \not\subseteq N(u) \quad \text{for every } u \in V(\Gamma).
\]
\end{definition}
In particular, $P_R(S)\neq\emptyset$.

\begin{proposition}\label{prop: dispersed problematic in Lambda}
    Let $n\geq 2k$ and $k\geq 3$. Then $\Lambda(n,k)$ has dispersed problematic $R$-vertices.
\end{proposition}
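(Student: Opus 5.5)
Fix a state $S$ and put $X = S\cap L$, $Y = L - S$, so $X\sqcup Y = \{1,\dots,n\}$. A $k$-subset vertex $\sigma$ is problematic with respect to $S$ exactly when $\sigma\subseteq X$ or $\sigma\subseteq Y$. Thus
\[
P_R(S) = \binom{X}{k}\cup\binom{Y}{k}.
\]
Since $n\geq 2k$, at least one of $X,Y$ has size at least $k$; we may assume $|X|\geq k$, by symmetry.

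We must show that no vertex $u$ is adjacent to every element of $P_R(S)$. First suppose $u\in R$. Then $N(u)\subseteq L$, while $P_R(S)\subseteq R$ is non-empty, so $P_R(S)\not\subseteq N(u)$. Now suppose $u = i$ is a number vertex. Then $N(i)$ is the set of $k$-subsets containing $i$. So it suffices to find a problematic $k$-subset avoiding $i$, namely a $k$-subset of $X-\{i\}$ or of $Y-\{i\}$.

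We split into cases according to the sizes of $X$ and $Y$.
\begin{itemize}
\item If $|X|\geq k+1$, then $|X-\{i\}|\geq k$, and we choose $\sigma\subseteq X-\{i\}$ of size $k$.
\item If $|X| = k$ and $i\notin X$, then $\sigma = X$ works.
\item If $|X| = k$ and $i\in X$, then $|Y| = n-k\geq k$ and $i\notin Y$, so any $k$-subset of $Y$ works.
\end{itemize}
In every case we obtain $\sigma\in P_R(S)-N(i)$, which proves that $\Lambda(n,k)$ has dispersed problematic $R$-vertices.

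The only delicate step is the last case, where $|X| = k$ and $i\in X$. This is exactly where the hypothesis $n\geq 2k$ is needed, rather than $n\geq 2k-1$. Indeed, for $n = 2k-1$ with $|X| = k$, $|Y| = k-1$ and $i\in X$, the unique problematic vertex $X$ is adjacent to $i$, so the property fails there. The hypothesis $k\geq 3$ plays no role in this argument beyond the standing assumptions.
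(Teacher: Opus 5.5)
Your proof is correct and is essentially the paper's argument. The paper splits into the case $|L_0|=|L_1|=k$, where $P_R(S)=\{L_0,L_1\}$ consists of two disjoint $k$-subsets, and the case where one side has at least $k+1$ elements, where the $k$-subsets of that side have empty intersection; this is a global phrasing of your pointwise search for a problematic $k$-subset avoiding $i$, and your treatment of $u\in R$ and your remark on the failure for $n=2k-1$ are also accurate.
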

\begin{proof} Let $S$ be a state of $\Lambda(n,k)$, and let $L = \{1, \dots, n\} = L_0\sqcup L_1$ be the induced partition of $L$, i.e.\ $L_0 = L\cap S$ and $L_1= L\cap (V-S)$. Since $n\geq 2k$, there exist at least two $k$-subsets of $L$ entirely contained in $L_0$ or in $L_1$. If $|L_0| = |L_1| = k$, there are exactly two such $k$-subsets, namely $L_0$ and $L_1$, and since $L_0\cap L_1 = \emptyset$, the vertices $L_0,L_1\in R$ have no common neighbors. Otherwise at least one of $L_0, L_1$, say $L_0$, has at least $k+1$ elements. Then any of the $k$-subsets $\sigma$ of $L_0$ belongs to $P_R(S)$. Since $$\bigcap_{\substack{\sigma\subseteq L_0\\ |\sigma|=k}} \sigma = \emptyset$$
there is no $u\in L$ such that $P_{R}(S)\subseteq N(u)$ (and of course no $u\in R$ as $\Gamma$ is bipartite). Thus $\Lambda(n,k)$ has dispersed problematic $R$-vertices.
\end{proof}

\subsection{Persistence of the dispersed problematic vertices under graph operations}
We check that each of the graph operations defined in Section~\ref{sec: graph operations} preserves the property of having dispersed problematic $R$-vertices. We start with the vertex link double.
\begin{proposition}
\label{prop: problematic under link double}
Let $\Gamma$ be a bipartite graph which has dispersed problematic $R$-vertices, 
and let $v \in V(\Gamma)$. 
The vertex link double $\Gamma'$ of $\Gamma$ along $v$ also
has dispersed problematic $R$-vertices.
\end{proposition}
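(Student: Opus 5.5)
Given a state $S'$ of $\Gamma'=\Gamma^0\cup_{\lk(v)}\Gamma^1$, we restrict it to each copy. For $i=0,1$ let $S^i\subseteq V(\Gamma)$ be the state of $\Gamma$ obtained by pulling back $S'$ along $\iota_i$ on $V(\Gamma)-\{v\}$, and declaring $v\in S^i$ or not arbitrarily. This choice is immaterial for problematic $R$-vertices other than $v$'s neighbors: $v$ is a neighbor only of vertices of $N(v)$. To avoid trouble, we treat the two cases $v\in L$ and $v\in R$ separately.

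\emph{Case $v\in R$.} Then $N(v)\subseteq L$ and $v$ is adjacent to no $R$-vertex, so the choice for $v$ does not affect $P_R(S^i)$, except that $v$ itself may lie in $P_R(S^i)$. For every $R$-vertex $r\neq v$ of $\Gamma$, the neighbors of $\iota_i(r)$ in $\Gamma'$ are exactly $\iota_i(N(r))$. Hence $\iota_i(r)\in P_R(S')$ if and only if $r\in P_R(S^i)$.

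We choose $v$'s membership so that $v\notin P_R(S^i)$ whenever possible. If this is impossible, all of $N(v)$ lies on one side under $S'$, and either choice of membership for $v$ keeps $v$ problematic. Thus $P_R(S')\supseteq \iota_0(P_R(S^0)-\{v\})\cup\iota_1(P_R(S^1)-\{v\})$.

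Suppose $u'\in V(\Gamma')$ is adjacent to all of $P_R(S')$. Then $u'$ is an $L$-vertex, lying either in $N(v)$ or in $B^j$. If $u'=b^j$ with $b\in B$, then $b^j$ has no neighbors in $\Gamma^{1-j}$. So $P_R(S^{1-j})\subseteq\{v\}$, and $v$ is problematic in $S^{1-j}$. But $P_R(S^{1-j})=\{v\}\subseteq N(u)$ for any $u\in N(v)$, which contradicts dispersion of $\Gamma$ (note $N(v)\ne\emptyset$ follows from $P_R\neq\emptyset$ arguments; the degenerate isolated case must be handled separately).

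If $u'=w\in N(v)$, then $w$ is adjacent to $\iota_0(P_R(S^0)-\{v\})$. Since $w$ is also adjacent to $v$ in $\Gamma$, the vertex $w$ is adjacent to all of $P_R(S^0)$ in $\Gamma$, again a contradiction.

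\emph{Case $v\in L$.} Now $N(v)\subseteq R$, and for $r\in N(v)$ membership of $v$ matters. The vertex $r$ is shared, and its neighbors in $\Gamma'$ are $\iota_0(N(r)-v)\cup\iota_1(N(r)-v)$. The plan is to define $S^0,S^1$ so that $r\in P_R(S')$ exactly when $r\in P_R(S^0)\cap P_R(S^1)$ up to the role of $v$. Concretely, for $r\in N(v)$ we have $r\in P_R(S')$ iff $N(r)-v$ is on one common side in both pullbacks.

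We then place $v$ in $S^i$ to maximize agreement, or more simply argue with both choices. Taking the candidate $u'$: if $u'\in B^j$ is an $L$-vertex, then the $R$-vertices of $P_R(S')$ in $N(v)$ and in $\Gamma^{1-j}$ must be adjacent to $u'$. This forces $P_R(S')$ to be contained in $B^j$-side neighbors. We then take a state of $\Gamma$ that copies $S'$ on $\Gamma^j$ and chooses $v$ so that $v$ does not create extra problematic vertices. Every problematic $R$-vertex of that $\Gamma$-state lifts to a problematic vertex of $S'$ adjacent to $u'$, contradicting dispersion in $\Gamma$.

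The main obstacle is this case $v\in L$. There one must choose the $\Gamma$-state (the side for $v$ and which copy to pull back) so that every element of its problematic set really lifts into $P_R(S')$. Shared vertices $r\in N(v)$ can fail to be problematic in $\Gamma'$ even when problematic in $S^0$, if $S^1$ disagrees.

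I would first try pulling back via the copy $\Gamma^{j}$ containing $u'$, with $v$ assigned to the side opposite to the majority so that shared $r$ problematic in $S^j$ are forced problematic. Failing that, I would pass to the other copy $\Gamma^{1-j}$, where $u'$ has no neighbors except through $N(v)$. This forces the $\Gamma$-problematic set into $N(v)\subseteq N(v_{\text{image}})$, which contradicts dispersion with $u=v$.
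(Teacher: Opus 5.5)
Your treatment of the case $v\in R$ matches the paper's argument subcase by subcase. For $u'\in (L-N(v))^j$ you pass to the copy $\Gamma^{1-j}$ and rule out $P_R=\{v\}$, and for $u'\in N(v)$ you use that $u'$ is also adjacent to $v$.

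The case $v\in L$, as written, rests on a step that fails. In your first route you pull back along the copy $\Gamma^j$ containing $u'$. You then claim that, for a suitable side of $v$, every problematic $R$-vertex of the resulting $\Gamma$-state lifts to $P_R(S')$. This fails for two reasons:
\begin{itemize}
\item A single placement of $v$ cannot neutralize all shared $r\in N(v)$ at once: one $r$ may have $N(r)-\{v\}\subseteq S^j$ while another $r'$ has $N(r')-\{v\}$ disjoint from $S^j$.
\item A shared $r\in P_R(S^j)$ need not lie in $P_R(S')$, because its neighbours in $\Gamma^{1-j}$ may be split by $S'$.
\end{itemize}
So no contradiction with dispersion follows from that route.

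Your fallback is correct, complete on its own, and is exactly the paper's proof, so it should be the whole argument. The point you are missing is that you never need all problematic vertices of a $\Gamma$-state to lift. You only need one lying off $N(v)$, and dispersion in $\Gamma$ applied with $u=v$ supplies it.

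Concretely, for $i=0,1$ extend $S'|_{\Gamma^i}$ to a state $S^i$ of $\Gamma$ arbitrarily, and pick $w_i\in P_R(S^i)$ with $w_i\notin N(v)$. Then $w_i\in B$ and $v\notin N(w_i)$, so the neighbours of $w_i^i$ in $\Gamma'$ are exactly $\iota_i(N(w_i))\subseteq (L-\{v\})^i$. Hence $w_i^i\in P_R(S')$ regardless of where $v$ was placed. Since $(L-\{v\})^0$ and $(L-\{v\})^1$ are disjoint, no vertex of $\Gamma'$ is adjacent to both $w_0^0$ and $w_1^1$. The shared vertices never enter, so $v\in L$ is the easy case rather than the main obstacle.

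Separately, your parenthetical claim that $N(v)\neq\emptyset$ follows from $P_R\neq\emptyset$ is false. An isolated $v\in R$ lies in every $P_R(S)$, so such a $\Gamma$ is automatically dispersed, and the statement then genuinely fails. Take $v$ isolated together with the path with edges $(\ell_1,r),(r,\ell_2)$. Then $\Gamma'$ is two disjoint copies of the path, and $S'=\{\ell_1^0,\ell_1^1\}$ has $P_R(S')=\emptyset$. So this case cannot be handled separately; one must assume $N(v)\neq\emptyset$ when $v\in R$. The paper's proof also uses this tacitly when it asserts that some problematic $w\neq v$ exists.
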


\begin{proof}
Let $V(\Gamma) = L\sqcup R$ be the partition providing the bipartite structure. We consider two cases: $v$ has type $L$, and $v$ has type $R$.
 
\noindent\emph{Case: $v\in L$.} 
As in Definition~\ref{defn: link double} the vertex set of $\Gamma'$ decomposes into 
$$ V(\Gamma') =  (R-N(v))^0\sqcup (L-\{v\})^0\sqcup N(v) \sqcup (L-\{v\})^1\sqcup (R-N(v))^1.$$
See Figure~\ref{fig:five-column-generic}. 
\begin{figure}
\centering
\begin{tikzpicture}[
    outer/.style={draw=black, fill=orange!15, rounded corners,
                  minimum width=2cm, minimum height=0.5cm, font=\small,
                  align=center},
    inner/.style={draw=black, fill=gray!15, rounded corners,
                  minimum width=2cm, minimum height=0.5cm, font=\small,
                  align=center},
    mid/.style={draw=black, fill=orange!15, rounded corners,
                minimum width=2cm, minimum height=0.5cm, font=\small},
    col/.style={font=\small\bfseries, align=center},
    >=Stealth, thick
]
\def\ca{0}
\def\cb{2.9}
\def\cc{5.6}
\def\cd{8.3}
\def\ce{11.2}
\node[outer] (A1) at (\ca, 0) {$(R - N(v))^0$};
\node[inner] (B1) at (\cb, 0) {$(L - \{v\})^0$};
\node[mid]   (C1) at (\cc, 0) {$N(v)$};
\node[inner] (B2) at (\cd, 0) {$(L - \{v\})^1$};
\node[outer] (A2) at (\ce, 0) {$(R - N(v))^1$};
\draw (A1.east) -- (B1.west);
\draw (B1.east) -- (C1.west);
\draw (C1.east) -- (B2.west);
\draw (B2.east) -- (A2.west);
\end{tikzpicture}
\caption{The structure of the vertex link double of a
bipartite graph $\Gamma$ along $v \in L$. Vertex link doubling along $v\in R$ yields a similar graph, with $L$ and $R$ swapped.}
\label{fig:five-column-generic}
\end{figure}
Let $S'$ be an arbitrary state of $\Gamma'$. 
Let $\Gamma^i$ be the subgraph of $\Gamma'$ spanned by $(R-N(v))^i\sqcup (L-\{v\})^i\sqcup N(v)$, which is naturally isomorphic to the subgraph of $\Gamma$ spanned by $V(\Gamma)-\{v\}$. Note that $S'$ restricts to a state $S^i$ of $\Gamma^i$, which can be extended to a state $S$ of $\Gamma$ by choosing $v\in S$ or $v\notin S$.

Since $\Gamma$ has dispersed problematic $R$-vertices, we know that for any state $S$ of $\Gamma$, there exists $w\in P_R(S)$ such that $w\notin N(v)$. 
Independent of how the state $S^i$ is extended to $S$, there exists $w^i\in P_R(S')$ such that $w\notin N(v)$, i.e.\ $w^i\in (R-N(v))^i$. Since the distance between $w^0$ and $w^1$ in $\Gamma'$ is at least $4$, we conclude that $\Gamma'$ has dispersed problematic $R$-vertices.

\noindent\emph{Case: $v\in R$.} Now the vertex set of $\Gamma'$ decomposes into 
$$ V(\Gamma') =  (L-N(v))^0\sqcup (R-\{v\})^0\sqcup N(v) \sqcup (R-\{v\})^1\sqcup (L-N(v))^1.$$ We will show that for every $u \in V(\Gamma')$, there exists $w \in P_R(S')$ with $(u,w)\notin E(\Gamma')$. Clearly, it suffices to consider only $L$-vertices $u$.
Suppose $u\in (L-N(v))^i$. Then the state $S^{(1-i)}$ of $\Gamma^{(1-i)}$ extends to a state of $\Gamma$ for which there must exist problematic $R$-vertices, and at least one of them $w$ is not equal to $v$. Then $w^{(1-i)}$ is problematic with respect to $S'$, and not contained in $N(u)$. Suppose that $u\in N(v)$. 
Similarly as in the previous case, restrict the state $S'$ to a state $S^i$ of $\Gamma^i$, which can again be extended to a state of $\Gamma$ by choosing $v\in S$ or $v\notin S$. Note that $v$ might or might not be problematic with respect to $S^i$ extended to $\Gamma$. 
Since $\Gamma$ has dispersed problematic $R$-vertices, there exists $w^i$ which is problematic with respect to the extended $S^i$, and which is not adjacent to $u$.
\end{proof}

Next we consider the simultaneous vertex link multiple. 
\begin{proposition}
\label{prop: problematic under link multiple}
Let $\Gamma$ be a bipartite graph which has dispersed problematic $R$-vertices, 
and let $v_1, v_2, \dots, v_m \in V(\Gamma)$ be vertices with disjoint stars, where $m\geq 2$. 
The simultaneous vertex link multiple $\Gamma'$ of $\Gamma$ along $v_1, \ldots, v_m$ also has dispersed problematic $R$-vertices.
\end{proposition}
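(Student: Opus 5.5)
We follow the strategy of Proposition~\ref{prop: problematic under link double}: restrict a state of $\Gamma'$ to the copies of (subgraphs of) $\Gamma$ inside $\Gamma'$, extend to a state of $\Gamma$, and use dispersion in $\Gamma$. Fix a state $S'$ of $\Gamma'$ and a vertex $u\in V(\Gamma')$. Since $\Gamma'$ is bipartite by Proposition~\ref{prop: graph operation bipartite}, it suffices to treat $L$-vertices $u$. We must produce $w\in P_R(S')$ with $(u,w)\notin E(\Gamma')$. Recall $\Gamma'=\Gamma^0\cup\Gamma^1\cup\dots\cup\Gamma^m$, where $\Gamma^i\cong\Gamma-\{v_i\}$ for $i\ge 1$ meets $\Gamma^0$ in $N(v_i)$, and distinct $\Gamma^i,\Gamma^j$ with $i,j\geq 1$ are disjoint.

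\emph{Key transfer step.} For $i\geq 1$, restrict $S'$ to $\Gamma^i$ and extend it to a state $S$ of $\Gamma$ by choosing whether $v_i\in S$. A vertex $w\in R-\{v_i\}$ that is problematic for $S$ has all its $\Gamma$-neighbors other than $v_i$ in $\Gamma^i$. If $w\notin N(v_i)$ (when $v_i\in L$), then $\iota_i(w)=w^i$ has exactly the lifted neighbors, so $w^i\in P_R(S')$. If $v_i\in R$, then $w\neq v_i$ is not adjacent to $v_i$, and the same conclusion holds for $\iota_i(w)$. However, $w$ may lie in $N(v_i)$, shared with $\Gamma^0$, and then it has extra neighbors from other copies. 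The point to verify carefully is that in $\Gamma'$ a vertex of $N(v_i)$ has neighbors only in $\Gamma^0\cup\Gamma^i$, and these come from the same $\Gamma$-neighbors lifted through $\iota_0$ and $\iota_i$. Thus such a vertex is problematic only if both lifts agree with the state. So we will mostly use problematic vertices lying in $B^i$ ($i\ge1$), or in $B^0$ where $\Gamma^0$ contains all neighbors, avoiding the glued sets.

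\emph{Case analysis on $u$.} Suppose first $u\in B^i$ with $i\ge 1$, or $u\in N(v_i)$. Choose an index $j\geq 1$, $j\ne i$; this is possible since $m\geq2$. Apply dispersion in $\Gamma$ to the extension of $S'|_{\Gamma^j}$ and to the vertex $v_j$ (if $v_j\in L$), or simply obtain some problematic $w\neq v_j$ (if $v_j\in R$, since $v_j$ is not adjacent to all of $P_R$ only in the $L$ case). This gives $w\in P_R(S)$ with $w\notin N(v_j)$, so $w^j\in B^j$ is problematic in $\Gamma'$. Moreover $w^j$ is at distance at least $3$ from any vertex of $\Gamma^i\cup N(v_i)$ other than those in $N(v_j)$, hence not adjacent to $u$. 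The remaining case is $u\in B^0$. Then $u=\iota_1(u')$-type identifications fail, since $B^0$ lies in $\Gamma^0$ only. Again pick any $j\ge1$: a problematic $w^j\in B^j$ as above is not adjacent to $u$, because $B^0$ and $B^j$ have no edges between them.

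\emph{Main obstacle.} The delicate point is the subcase $v_j\in R$, where we need a problematic $R$-vertex of the extended state that is not $v_j$ and not in $N(v_j)$; but $N(v_j)\subseteq L$ then, so we only need $w\ne v_j$. If $P_R(S)=\{v_j\}$, apply dispersion with $u$ replaced by any neighbor of $v_j$ to get a contradiction (if $v_j$ has no neighbors, $P_R(S)\subseteq N(x)$ fails trivially anyway, and we handle this isolated case by choosing the extension so that $v_j$ is not needed). In the subcase $v_j\in L$, dispersion applied to $u=v_j$ directly yields $w\notin N(v_j)$. I expect the bookkeeping of which copies a problematic vertex lands in, particularly ensuring it avoids the glued sets $N(v_i)$, to be the main work.
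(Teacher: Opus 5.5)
Your argument is essentially the paper's proof. Both restrict $S'$ to each $\Gamma^j\cong\Gamma-\{v_j\}$ with $j\geq 1$ and find a problematic $R$-vertex of the extended state lying outside $\st(v_j)$, so that its lift sits in the private part $B^j$ with all neighbors in $B^j\cup N(v_j)$. Both then conclude from $m\geq 2$ and the pairwise disjointness of the sets $B^j\cup N(v_j)$; the paper phrases this as ``problematic vertices in different $B^j$ share no common neighbor'', which is your case analysis on $u$. You are more careful than the paper in justifying why the problematic vertex can be taken outside $\st(v_j)$: dispersion at $u=v_j$ when $v_j\in L$, and at a neighbor of $v_j$ when $v_j\in R$.

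The one step that fails as written is your treatment of an isolated $v_j\in R$. Since $v_j\in R$, whether $v_j\in S$ has no effect on $P_R(S)$, so no choice of extension helps. In fact $B^j$ can genuinely contain no problematic $R$-vertex. Take $\Gamma=\{v\}\sqcup K_{2,2}$ with $v\in R$; this graph has dispersed problematic $R$-vertices because $v$ is always problematic and has no neighbors. Let $v_1=v$, let $v_2$ be an $L$-vertex of $K_{2,2}$, and let $S'$ separate the two $L$-vertices of $B^1$; then neither $R$-vertex of $B^1$ is problematic. If $u\in B^2\cup N(v_2)$, your argument is forced to use $j=1$ and produces nothing.

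The repair is easy. For any $i\neq j$ (available since $m\geq 2$), the copy $v_j^i\in B^i$ is an isolated $R$-vertex of $\Gamma'$. It therefore lies in $P_R(S')$ for every state $S'$ and is adjacent to no vertex, which gives dispersion immediately. The paper's own proof passes over this degenerate case silently as well.
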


\begin{proof}
Let $S'$ be any state of $\Gamma'$. For each $1 \leq i \leq m$, let $\Gamma^i$ be a subgraph spanned by $B^i\sqcup N(v_i)$, using the notation of Definition~\ref{defn: link multiple}. Note that $\Gamma^i$ is isomorphic to the subgraph of $\Gamma$ spanned by $V(\Gamma)-\{v_i\}$. Consider the restriction of the state $S'$ to $\Gamma^i$. Since $\Gamma$ has dispersed problematic $R$-vertices with respect to any state, for each $i$, $B^i$ contains a problematic $R$-vertex with respect to $S'$. No two such vertices belonging to different $B^i$'s share a common neighbor, so $\Gamma'$ has dispersed problematic $R$-vertices.
\end{proof}

Finally, we consider the edge quadruple. We first analyze the structure of an edge quadruple of a bipartite graph. Let $\Gamma$ be a bipartite graph with a vertex set partition $V(\Gamma) = L\sqcup R$. Let $e=(u,v)\in E(\Gamma)$ where $u\in L$ and $v\in R$. Using the notation of Definition~\ref{defn: edge quadruple}, when $\Gamma$ is bipartite, then $N_{uv} = \emptyset$, $N_u = N(u)$, and $N_v = N(v)$. We can also further decompose $B = V(\Gamma) - (\{u,v\}\cup N(u)\cup N(v))$ into $B_L \sqcup B_R$ where $B_L = B\cap L = L - \left(\{u\}\cup N(v)\right)$ and $B_R = B\cap R = R- \left(\{v\} \cup N(u)\right)$. See Figure~\ref{fig:quadrupling}.

\begin{figure}
\centering
\begin{tikzpicture}[
    num/.style={draw=black, fill=blue!15, circle,
                minimum size=1.1cm, font=\small\bfseries},
    cfg/.style={draw=black, fill=orange!20, circle,
                minimum size=1.1cm, font=\small\bfseries},
    prob/.style={draw=red!70!black, fill=red!15, circle,
                 minimum size=1.1cm, font=\small\bfseries,
                 line width=1.2pt},
    >=Stealth, thick
]

\node[num] (X1) at ( 0,  1.5) {$N_u^0$};
\node[num] (X2) at ( 0, -1.5) {$N_u^1$};
\node[cfg] (Y1) at (-1.5,  0) {$N_v^0$};
\node[cfg] (Y2) at ( 1.5,  0) {$N_v^1$};

\draw (X1) -- (Y1);
\draw (X1) -- (Y2);
\draw (X2) -- (Y1);
\draw (X2) -- (Y2);

\node[cfg] (WTL) at (-1.5,  3) {$B_L^{00}$};
\node[cfg] (WTR) at ( 1.5,  3) {$B_L^{10}$};
\node[cfg] (WBL) at (-1.5, -3) {$B_L^{01}$};
\node[cfg] (WBR) at ( 1.5, -3) {$B_L^{11}$};

\draw (X1) -- (WTL);
\draw (X1) -- (WTR);
\draw (X2) -- (WBL);
\draw (X2) -- (WBR);

\node[num] (ZTL) at (-3,  1.5) {$B_R^{00}$};
\node[num] (ZTR) at ( 3,  1.5) {$B_R^{10}$};
\node[num] (ZBL) at (-3, -1.5) {$B_R^{01}$};
\node[num] (ZBR) at ( 3, -1.5) {$B_R^{11}$};

\draw (ZTL) -- (WTL);
\draw (ZTR) -- (WTR);
\draw (ZBL) -- (WBL);
\draw (ZBR) -- (WBR);

\draw (ZTL) -- (Y1);
\draw (ZBL) -- (Y1);
\draw (ZTR) -- (Y2);
\draw (ZBR) -- (Y2);
\end{tikzpicture}
\caption{The quadrupled graph.}
\label{fig:quadrupling}
\end{figure}

\begin{proposition}
\label{prop: problematic under quadruple}
Let $\Gamma$ be a bipartite graph which has dispersed problematic $R$-vertices, 
and let $e=(u,v) \in E(\Gamma)$ be an edge. 
The edge quadruple $\Gamma'$ of $\Gamma$ along $e$ also has dispersed problematic $R$-vertices.
\end{proposition}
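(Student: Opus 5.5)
The plan is to argue exactly as in the case $v\in L$ of Proposition~\ref{prop: problematic under link double}: restrict a state of $\Gamma'$ to the four copies $\Gamma^{ij}$ of $\Gamma-\{u,v\}$, and use the hypothesis on $\Gamma$ to find a problematic $R$-vertex inside each copy of $B_R$. We then check from Figure~\ref{fig:quadrupling} that these four vertices have no common neighbor.

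Fix a state $S'$ of $\Gamma'$. For $i,j\in\{0,1\}$, let $S^{ij}$ be the restriction of $S'$ to $\Gamma^{ij}$, transported to $\Gamma-\{u,v\}$ via $\iota_{ij}$. Extend $S^{ij}$ arbitrarily to a state $S$ of $\Gamma$ by deciding whether $u,v\in S$. Since $\Gamma$ has dispersed problematic $R$-vertices, applying Definition~\ref{def:H} to the vertex $u$ gives some $w\in P_R(S)$ with $w\notin N(u)$. As $\Gamma$ is bipartite and $v\in N(u)$, we get $w\neq v$ and $w\in R-(\{v\}\cup N(u))=B_R$.

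The key step is to show that $w^{ij}=\iota_{ij}(w)\in B_R^{ij}$ is problematic with respect to $S'$. Since $w\in B_R$, all its neighbors lie in $L-\{u\}$, so the choice made for $u$ and $v$ is irrelevant. Moreover, each neighbor $y$ of $w$ lies in $N_v$ or $B_L$. By the definition of $E(\Gamma')$, the neighbors of $w^{ij}$ in $\Gamma'$ are exactly the vertices $\iota_{ij}(y)$ for $y\in N(w)$. This holds because $w^{ij}$ lies only in the copy $\Gamma^{ij}$: the vertices of $B$ are not shared between copies, unlike those of $N_u^j$ and $N_v^i$. Consequently the neighbors of $w^{ij}$ all lie in $S'$, or all lie outside $S'$, precisely when the same holds for the neighbors of $w$ with respect to $S$. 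This verification of the neighborhood is the one point that needs care. Doing it, we obtain four vertices $w^{00},w^{10},w^{01},w^{11}\in P_R(S')$, with $w^{ij}\in B_R^{ij}$.

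Finally, we show that no $x\in V(\Gamma')$ is adjacent to all four. Such an $x$ would have to be an $L$-vertex, that is, it lies in some $B_L^{ij}$ or some $N_v^i$. Reading off Figure~\ref{fig:quadrupling} (equivalently, the definition of $\iota_{ij}$), a vertex of $B_L^{ij}$ has $R$-neighbors only in $B_R^{ij}\cup N_u^j$. Likewise, a vertex of $N_v^i$ has $R$-neighbors only in $B_R^{i0}\cup B_R^{i1}\cup N_u^0\cup N_u^1$. In either case $x$ misses $w^{(1-i)j}$, so $P_R(S')\not\subseteq N(x)$. Since $S'$ was arbitrary, $\Gamma'$ has dispersed problematic $R$-vertices.
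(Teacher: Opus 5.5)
Your proposal is correct and takes essentially the same route as the paper. You restrict $S'$ to each copy $\Gamma^{ij}$, apply the hypothesis at $u$ to obtain a problematic vertex in $B_R^{ij}$, and conclude because copies of $B_R$ with different first index have no common neighbor (the paper uses the pair $B_R^{01}, B_R^{10}$); your check that the neighbors of $w^{ij}$ are exactly $\iota_{ij}(N(w))$ supplies detail the paper leaves implicit.
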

\begin{proof}
Let $S'$ be any state of $\Gamma'$. For each $i,j\in \{0,1\}$, the subgraph $\Gamma^{ij}$ spanned by $B_L^{ij}\sqcup B_R^{ij} \sqcup N_v^i \sqcup N_u^j$ is isomorphic to the subgraph of $\Gamma$ spanned by $V(\Gamma)-\{u,v\}$.
Since $\Gamma$ has dispersed problematic $R$-vertices for every state, there exists at least one problematic vertex $w\not\in N(u)$.
Restricting the state $S'$ to $\Gamma^{ij}$ and viewing it as a subgraph of $\Gamma$ yields a problematic vertex in $B_R^{ij}$ with respect to $S'$. 
Since no vertex in $B_R^{01}$ shares a neighbor with a vertex from $B_{R}^{10}$, we deduce that $\Gamma'$ has dispersed problematic $R$-vertices.
\end{proof}

Finally we can prove the main result of this section.
\begin{theorem}\label{thm: no legal system operation}
For $n \geq 2k$ and $k \geq 3$, no graph obtained from $\Lambda(n,k)$ by any
sequence of vertex link doubling, simultaneous vertex link multiplying, and edge quadrupling admits a legal system.
\end{theorem}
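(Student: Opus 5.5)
The plan is to combine Proposition~\ref{prop: dispersed problematic in Lambda} with the three persistence results and then invoke Lemma~\ref{lem: problematic vertex obstruction}. First, by Proposition~\ref{prop: dispersed problematic in Lambda}, $\Lambda(n,k)$ with $n\geq 2k$ and $k\geq 3$ has dispersed problematic $R$-vertices. Let $\Lambda=\Gamma_0,\Gamma_1,\dots,\Gamma_N=\Gamma$ be the sequence of graphs, where each $\Gamma_{t+1}$ is obtained from $\Gamma_t$ by one of the three operations. By Proposition~\ref{prop: graph operation bipartite}, each $\Gamma_t$ is bipartite, with the partition pulled back along the natural maps $\pi$. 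Since these maps compose, the $R$-side of every $\Gamma_t$ consists of the $R$-vertices, so the notion of ``problematic $R$-vertex'' is consistently defined along the sequence. Induction on $t$, using Propositions~\ref{prop: problematic under link double}, \ref{prop: problematic under link multiple} and \ref{prop: problematic under quadruple}, shows that every $\Gamma_t$, and in particular $\Gamma$, has dispersed problematic $R$-vertices. In the simultaneous link multiple with $m=1$ the operation is a link double, so it is covered by the first of these propositions. As noted after Definition~\ref{def:H}, it follows that $P_R(S)\neq\emptyset$ for every state $S$ of $\Gamma$, so every state has a problematic vertex.

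Second, I will verify the hypothesis of Lemma~\ref{lem: problematic vertex obstruction}: every $v\in V(\Gamma)$ has at least two vertices distinct from $v$ and outside $N(v)$. The paper notes this holds for any bipartite graph in which each side has at least three vertices, because the other vertices on the same side as $v$ are non-neighbours. So it suffices to check that both sides of each $\Gamma_t$ have at least three vertices, and I will do this by induction.

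For $\Lambda(n,k)$ the $L$-side has $n\geq 6$ vertices and the $R$-side has $\binom{n}{k}\geq 3$ vertices. For the inductive step I will show that the operations never decrease the number of vertices on either side. Each operation deletes a set $X$ of vertices (one vertex, $m$ vertices with disjoint stars, or an edge's two endpoints) and duplicates everything outside the stars of $X$. The part of $V(\Gamma_t)-X$ lying outside the stars of $X$ is doubled (or multiplied). The link vertices are kept once, and in the edge quadruple the vertices of $N_u$ and $N_v$ are doubled. Thus each side loses at most the deleted vertices but gains copies of the non-adjacent vertices on that side.

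The main obstacle is making this count precise on the side of the deleted vertex $v$. There, the vertices other than $v$ are not in $N(v)$ (by bipartiteness), so they lie in $B$ and get doubled. Hence that side goes from $s$ to $2(s-1)\geq s$ vertices when $s\geq 2$. The other side loses nothing in the link double case. For the link multiple, each deleted $v_i$ is replaced by the copies in $B^j$ for $j\neq i$, so the count again does not drop. In the edge quadruple, the $L$-side becomes $2|N_v|+4|B_L|\geq |N_v|+|B_L|+1 = |L|$ whenever $|L|\geq 2$, and symmetrically for $R$. Minimality in $\Lambda$ already gives at least three vertices per side, so the hypothesis holds throughout.

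With both hypotheses verified for $\Gamma$, Lemma~\ref{lem: problematic vertex obstruction} shows that $\Gamma$ admits no legal system, which proves the theorem.
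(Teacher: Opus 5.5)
Your proposal is correct and follows the paper's proof. Like the paper, you get dispersed problematic $R$-vertices by induction from Proposition~\ref{prop: dispersed problematic in Lambda} and the three persistence propositions, and then apply Lemma~\ref{lem: problematic vertex obstruction}. Your vertex count showing that each side keeps at least three vertices, together with your treatment of the case $m=1$, supplies the lemma's hypothesis, which the paper leaves implicit.
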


\begin{proof}
Let $\Lambda(n,k)  =\Gamma_0, \Gamma_1, \dots,\Gamma_m = \Gamma'$ be a sequence where each $\Gamma_{i+1}$ is obtained from $\Gamma_i$ by one of the three graph operations. We show by induction that each $\Gamma_i$ is bipartite and has dispersed problematic $R$-vertices. Bipartiteness follows from Proposition~\ref{prop: graph operation bipartite}.
For the dispersed problematic $R$-vertices, the base case follows from Proposition~\ref{prop: dispersed problematic in Lambda}. The inductive step follows from Proposition~\ref{prop: problematic under link double}, Proposition~\ref{prop: problematic under link multiple}, and Proposition~\ref{prop: problematic under quadruple}. Thus $\Gamma'$ does not admit a legal system by Lemma~\ref{lem: problematic vertex obstruction}.
\end{proof}

\section{Infinite generation of the kernel of $\chi: W_{\Gamma}'\to\mathbb Z$}\label{sec: infinite generations}

\subsection{Twisted homology criterion}
Let $X$ be a connected CW-complex, $\chi\colon\pi_1X\to\mathbb Z$ a surjective homomorphism, and $\widehat X\to X$ the associated infinite cyclic cover, so that $\pi_1\widehat X = \ker\chi$. 
The complex $\widehat X$ admits a deck transformation action by $\mathbb Z$, and so the cellular chain complex $C_*(\widehat X; \mathbb Q)$ can be viewed as a complex of free $\mathbb Q[t^{\pm 1}]$-modules, freely generated by (lifts of) cells of $X$, and where the boundary maps are $\mathbb Q[t^{\pm1}]$-linear. For reference, see e.g.~\cite[VII.7]{BrownBook82and94}. Define
$$C^{\chi}_*(X;\mathbb Q(t)) := C_*(\widehat X;\mathbb Q)\otimes_{\mathbb Q[t^{\pm1}]}\mathbb Q(t),$$
with boundary maps $\partial_*\otimes1$. Thus $C^{\chi}_n(X;\mathbb Q(t))$ is a $\mathbb Q(t)$-vector space with basis the $n$-cells of $X$, and its boundary maps are twisted by powers of $t$.

Let $(C^*_{\chi}(X;\mathbb Q(t)),\delta^*)$ be the dual cochain complex, i.e.
$$C^n_{\chi}(X;\mathbb Q(t)) = \operatorname{Hom}_{\mathbb Q(t)}\big(C^{\chi}_n(X;\mathbb Q(t)),\mathbb Q(t)\big),$$
with $\delta^n$ the transpose of $\partial_{n+1}$. We write
$$H_n^{\chi}(X;\mathbb Q(t)) = H_n\big(C_*^{\chi}(X;\mathbb Q(t))\big),\qquad H^n_{\chi}(X;\mathbb Q(t)) = H^n\big(C^*_{\chi}(X;\mathbb Q(t))\big).$$
Since $\mathbb Q(t)$ is a field, by the universal coefficient theorem we have
$$H^n_{\chi}(X;\mathbb Q(t))\cong\operatorname{Hom}_{\mathbb Q(t)}\big(H^{\chi}_n(X;\mathbb Q(t)),\mathbb Q(t)\big),$$
so in particular one vanishes if and only if the other does.

\begin{proposition}\label{prop: cocycle criterion}
Let $X$ be a connected CW-complex and $\chi\colon\pi_1X\to\mathbb Z$ a surjective homomorphism. If $\ker\chi$ is  finitely generated, then $H_1^{\chi}(X;\mathbb Q(t))=0$ (equivalently $H^1_{\chi}(X;\mathbb Q(t))=0$).
\end{proposition}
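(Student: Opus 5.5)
The plan is to identify $H_1^{\chi}(X;\mathbb Q(t))$ with the localization of the rational homology of the infinite cyclic cover, and then to use finite generation of $\ker\chi$ to show that this homology is a torsion module over $\mathbb Q[t^{\pm1}]$. Write $\Lambda=\mathbb Q[t^{\pm1}]$. Since $\mathbb Q(t)$ is the field of fractions of $\Lambda$, it is a flat $\Lambda$-module. Flatness lets us commute tensoring with taking homology:
$$H_1^{\chi}(X;\mathbb Q(t))\cong H_1(\widehat X;\mathbb Q)\otimes_{\Lambda}\mathbb Q(t).$$
So it suffices to show that $H_1(\widehat X;\mathbb Q)$ is a torsion $\Lambda$-module.

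To get torsion, I would use that $\pi_1\widehat X=\ker\chi$ and that $\widehat X$ is connected, which holds because $\chi$ is surjective. By Hurewicz,
$$H_1(\widehat X;\mathbb Q)\cong (\ker\chi)^{ab}\otimes\mathbb Q.$$
If $\ker\chi$ is finitely generated, this is a finite-dimensional $\mathbb Q$-vector space, say of dimension $d$. The deck generator $t$ acts on it as a $\mathbb Q$-linear endomorphism $T$. Let $p$ be the characteristic polynomial of $T$. By Cayley--Hamilton, $p(T)=0$, so $p(t)$ annihilates the whole module. Moreover $p$ is a nonzero element of $\Lambda$: it is monic of degree $d$, and when $d=0$ the module is already $0$.

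Hence every element of $H_1(\widehat X;\mathbb Q)$ is killed by a nonzero element of $\Lambda$, and therefore the module vanishes after tensoring with the fraction field $\mathbb Q(t)$. The equivalence with $H^1_\chi=0$ is the universal coefficient statement recorded just before the proposition.

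Everything here is fairly formal. The one step that needs care is identifying the twisted complex $C^\chi_*(X;\mathbb Q(t))$ with $C_*(\widehat X;\mathbb Q)\otimes_\Lambda\mathbb Q(t)$ and commuting the tensor product with homology. That identification is the definition given in the paper, and the commutation holds because localization is exact. I do not need finiteness of $X$ for this argument.
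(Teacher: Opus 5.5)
Your proof is correct and follows essentially the same argument as the paper. Flatness of $\mathbb Q(t)$ over $\mathbb Q[t^{\pm1}]$ reduces the claim to showing that the finite-dimensional module $H_1(\widehat X;\mathbb Q)$ is $\mathbb Q[t^{\pm1}]$-torsion. The only cosmetic difference is that you use Cayley--Hamilton to get one annihilating polynomial for the whole module, while the paper uses the linear dependence of $m, tm, \dots, t^d m$ to annihilate each element $m$ separately.
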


\begin{proof}
Since $\mathbb Q(t)$ is a localization of $\mathbb Q[t^{\pm1}]$, it is flat as a $\mathbb Q[t^{\pm1}]$-module, so 
$$H_1^{\chi}(X;\mathbb Q(t))\cong H_1(\widehat X;\mathbb Q)\otimes_{\mathbb Q[t^{\pm1}]}\mathbb Q(t).$$
Suppose $\ker\chi$ is finitely generated. Then $H_1(\widehat X;\mathbb Q) = H_1(\ker\chi;\mathbb Q)$ is finite-dimensional over $\mathbb Q$, say of dimension $d$. For $m\in H_1(\widehat X;\mathbb Q)$, the $d+1$ elements $m,tm,\dots,t^dm$ are linearly dependent over $\mathbb Q$, so $p(t)m = 0$ for some nonzero $p\in\mathbb Q[t^{\pm1}]$. Hence for every $f\in\mathbb Q(t)$,
$$m\otimes f = m\otimes p(t)\,p(t)^{-1}f = p(t)m\otimes p(t)^{-1}f = 0,$$
so $H_1^{\chi}(X;\mathbb Q(t)) = 0$. By the universal coefficient theorem $H^1_{\chi}(X;\mathbb Q(t)) = 0$.
\end{proof}

\subsection{Legal systems versus finite generation for bipartite system of moves}
Let $\Gamma$ be a bipartite graph, with the vertex set partition $V(\Gamma) = L\sqcup R$.
    Let $L= L_0\sqcup L_1, R = R_0\sqcup R_1$ yield an orbit of states $\mathcal S= \{S_{ij}: i,j\in \{0,1\}\}$ under the bipartite moves. Let $\Gamma_{ij}$ be the subgraph of $\Gamma$ spanned by $S_{ij} = L_i\cup R_j$. Let $\chi:W_{\Gamma}'\to \mathbb Z$ be the associated homomorphism, which is non-trivial when each $S\in \mathcal S$ is non-empty.

In the next lemma we consider the complex $\overline X_{\Gamma}$ defined in Remark~\ref{rem: 4vertex complex}, and the associated homomorphism $\overline \chi: \pi_1 \overline X_{\Gamma}\to \mathbb Z$ extending $\chi$. Recall that $\overline X_{\Gamma}$ has:
\begin{itemize}
    \item $4$ vertices: $x_{00}, x_{01}, x_{10}, x_{11}$;
    \item $2|V(\Gamma)|$ edges: for each $\ell\in L$ and $j=0,1$ an oriented edge $e_{\ell, j}$ from $x_{0j}$ to $x_{1j}$, and for each $r\in R$ and $i=0,1$ an oriented edge $e_{i, r}$ from $x_{i0}$ to $x_{i1}$;
    \item $|E(\Gamma)|$ squares: for each $(\ell, r)\in E(\Gamma)$ an oriented square $s_{\ell r}$ with the four distinct vertices, and edges (in order) $e_{\ell,0}, e_{1,r}, \overline e_{\ell,1}, \overline e_{0,r}, $ (where $\overline e$ denotes the edge $e$ with reversed orientation).
\end{itemize}

For $u \in V(\Gamma)$, let $\sigma_{u} = \begin{cases} 1 &\ \text{if }u\in S_{00} = L_0\cup R_0\\ -1 & \text{ if }u\notin S_{00} = L_0\cup R_0\end{cases}$. 
Since $\overline X_\Gamma$ is $2$-dimensional, we consider the chain complex
$C^{\overline\chi}_*(\overline X_\Gamma;\mathbb Q(t))$ of $\mathbb Q(t)$-vector spaces with bases
the cells of $\overline X_\Gamma$, non-zero only in dimensions $0,1,2$, with boundary maps
   $$\partial_1(e_{\ell, j}) = t^{\sigma_\ell}x_{1j} - x_{0j}  , \quad \partial_1(e_{i, r}) = t^{\sigma_r} x_{i1} - x_{i0},$$
   $$\partial_2(s_{\ell r}) = e_{\ell,0} + t^{\sigma_\ell} e_{1,r} - t^{\sigma_r} e_{\ell,1} - e_{0,r}.$$

\begin{lemma}\label{lem: twisted homology bipartite}
Suppose all states $S_{ij}$ in $\Gamma$ are non-empty. Then $$ \dim_{\mathbb Q(t)} H^1_{\overline\chi}(\overline X_{\Gamma}; \mathbb Q(t)) = \sum_{i,j\in \{0,1\}} \tilde b_0(\Gamma_{ij})$$
where $\tilde b_0(\Gamma_{ij})$ is the reduced $0$th Betti number of $\Gamma_{ij}$, i.e.\ the number of components of $\Gamma_{ij}$ minus $1$.
\end{lemma}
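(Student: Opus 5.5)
The plan is to compute the dimension over $\mathbb Q(t)$ by a rank count. I will organize the cells of $\overline X_\Gamma$ by a Morse-theoretic ``minimum vertex'' bookkeeping, so that the boundary matrix $\partial_2$ becomes block-triangular with respect to a filtration by powers of $t$. The diagonal blocks should be the incidence matrices of the four graphs $\Gamma_{ij}$.

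\emph{Setting up the count.} By the universal coefficient theorem it suffices to compute $\dim H_1^{\overline\chi}$. We have $\dim C_0=4$, $\dim C_1=2|V(\Gamma)|$ and $\dim C_2=|E(\Gamma)|$. Since all $S_{ij}$ are non-empty, Lemma~\ref{lem: nontrivial homomorphism} shows $\overline\chi$ is nontrivial. I will check directly from the formula for $\partial_1$ that $H_0^{\overline\chi}=0$, i.e.\ $\operatorname{rank}\partial_1=4$. For instance, a vertex $u\in S_{00}$ gives $\partial_1 e = t x_{10}-x_{00}$, which together with edges from the other states forces every $x_{ij}$ into the image: the four twisted relations around the square of vertices have a nontrivial monodromy $t^{m}$ with $m\neq 0$. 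Consequently
$$\dim H_1^{\overline\chi}=2|V(\Gamma)|-4-\operatorname{rank}\partial_2,$$
and the lemma reduces to the claim
$$\operatorname{rank}_{\mathbb Q(t)}\partial_2=\sum_{i,j}\bigl(|V(\Gamma_{ij})|-c(\Gamma_{ij})\bigr)=2|V(\Gamma)|-\sum_{i,j}c(\Gamma_{ij}).$$
Here $c$ denotes the number of components, and I use $\sum_{ij}|V(\Gamma_{ij})|=2|V(\Gamma)|$.

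\emph{Reindexing the cells.} Each edge of $\overline X_\Gamma$ is ascending, in the lifted $\widetilde f$, out of exactly one endpoint. An edge labelled $u$ at $x_{ij}$ is ascending exactly when $u\in S_{ij}$, so the edges ascending from $x_{ij}$ correspond bijectively to $V(\Gamma_{ij})$. Similarly, each square has a unique bottom corner $x_{ij}$, and at $x_{ij}$ both of its labels are ascending. Hence the squares with bottom $x_{ij}$ correspond to $E(\Gamma_{ij})$. After rescaling basis elements by suitable powers of $t$ (normalizing by heights), the formula $\partial_2(s_{\ell r})=e_{\ell,0}+t^{\sigma_\ell}e_{1,r}-t^{\sigma_r}e_{\ell,1}-e_{0,r}$ decomposes in a specific way. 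The two edges of $s_{\ell r}$ at its bottom vertex enter with $t$-order $0$, with coefficients exactly $\pm$ those of the simplicial boundary of the edge $(\ell,r)$ in $\Gamma_{ij}$. The other two edges enter with strictly positive $t$-order.

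\emph{Lower bound.} I will then order the basis and pass to the ring $\mathbb Q[t]_{(t)}$. Reducing modulo $t$, the matrix of $\partial_2$ becomes the block-diagonal matrix $\bigoplus_{ij}\partial^{\Gamma_{ij}}$ of simplicial boundary maps $\mathbb Q^{E(\Gamma_{ij})}\to\mathbb Q^{V(\Gamma_{ij})}$. Rank can only drop under specialization, so
$$\operatorname{rank}\partial_2\ \ge\ \sum_{ij}\bigl(|V(\Gamma_{ij})|-c(\Gamma_{ij})\bigr).$$

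\emph{Upper bound (the main obstacle).} I need to exhibit $\sum_{ij}c(\Gamma_{ij})-4$ independent classes. Equivalently, in cochain form, for each component $C$ of each $\Gamma_{ij}$ I want a $1$-cocycle $\phi_C$, and these should span a space of dimension $\sum_{ij} c(\Gamma_{ij})$ modulo the $4$-dimensional space of coboundaries $\delta C^0$. My first attempt is to let $\phi_C$ equal $1$ on the ascending edges at $x_{ij}$ labelled by vertices of $C$, and $0$ on the other ascending edges at $x_{ij}$. I would then solve the square relations $\delta^1\phi=0$ for the values on the remaining edges. This solving should be possible because each square relation involves a unique bottom pair, and the condition there is automatic, since both endpoints of the corresponding edge of $\Gamma_{ij}$ lie in the same component. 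The off-diagonal terms would be handled inductively along the $t$-filtration, or simply by checking directly that the four relations close up. If this direct construction becomes messy, the fallback is an independent upper bound on $\operatorname{rank}\partial_2$: show that each component $C$ yields a linear functional $\sum_{u\in C}\pm t^{a_u}e^*_u$ on $C_1$ that vanishes on $\operatorname{im}\partial_2$, and that these functionals restricted to $\ker\partial_1$ span a space of the right dimension. Either way, combining the upper and lower bounds on $\operatorname{rank}\partial_2$ with the count above yields the stated formula $\sum_{ij}\tilde b_0(\Gamma_{ij})$.
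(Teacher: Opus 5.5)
Your first half is correct, and it differs from the paper's route. After $\operatorname{rank}\partial_1=4$, normalizing each cell by the height of its bottom vertex makes $\partial_2$ a matrix over $\mathbb Q[t]$. Its reduction mod $t$ is block diagonal, with one block per $\Gamma_{ij}$, and semicontinuity of rank gives $\operatorname{rank}\partial_2\ge 2|V(\Gamma)|-\sum_{ij}c(\Gamma_{ij})$, i.e.\ $\dim H^1_{\overline\chi}\le\sum_{ij}\tilde b_0(\Gamma_{ij})$. (One small correction: in the blocks for $\Gamma_{01}$ and $\Gamma_{10}$ the two bottom edges enter with the same sign, so you get the unsigned incidence matrix. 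It still has rank $|V(\Gamma_{ij})|-c(\Gamma_{ij})$ because $\Gamma_{ij}$ is bipartite.)

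However, this is the inequality that Theorem~\ref{thm: iff for finite genration} does not use. That theorem needs $\dim H^1_{\overline\chi}\ge\sum_{ij}\tilde b_0(\Gamma_{ij})$, so that $H_1^{\overline\chi}=0$ forces every $\Gamma_{ij}$ to be connected. For this direction you only have a plan, and its justification does not hold up.

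\emph{First attempt.} The unknown values on edges ascending from the other three corners enter the square relations in two ways. They are bottom edges for squares with bottom $x_{i'j'}\neq x_{ij}$. They are also top edges, including for squares with bottom $x_{ij}$, whose order-$t$ part is a genuine constraint on them. The observation that the bottom condition at $x_{ij}$ is automatic only disposes of the order-zero part of one family among the $|E(\Gamma)|$ equations. Nothing you say shows that the remaining system is consistent, nor that the $\phi_C$ obtained at different corners are linearly independent.

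\emph{Fallback.} This is false as stated. Take a component $C$ of $\Gamma_{00}$, a vertex $\ell\in C\cap L_0$, and a neighbour $r\in R_1$ of $\ell$. The relation for $s_{\ell r}$ is $a_{\ell,0}+t\,b_{1,r}-t^{-1}a_{\ell,1}-b_{0,r}=0$. A functional $\sum_{u\in C}\pm t^{a_u}e_u^*$ vanishes on both edges labelled $r\notin C$ and on one of $e_{\ell,0},e_{\ell,1}$, hence by this relation on the other as well. So it is zero on every vertex of $C\cap L_0$ that has a neighbour in $R_1$.

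The missing idea is the paper's label-by-label change of coordinates on $1$-cochains. With $a_{\ell,j}=\phi(e_{\ell,j})$ and $b_{i,r}=\phi(e_{i,r})$, put
\[
\alpha_{\ell,0}=a_{\ell,0}-ta_{\ell,1},\quad \alpha_{\ell,1}=a_{\ell,0}-t^{-1}a_{\ell,1},\quad \beta_{0,r}=b_{0,r}-tb_{1,r},\quad \beta_{1,r}=b_{0,r}-t^{-1}b_{1,r}.
\]
This is invertible since its determinant is $t-t^{-1}\neq 0$. The cocycle condition $a_{\ell,0}-t^{\sigma_r}a_{\ell,1}=b_{0,r}-t^{\sigma_\ell}b_{1,r}$ then reads exactly $\alpha_{\ell,j}=\beta_{i,r}$ for $\ell\in L_i$, $r\in R_j$. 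Hence $Z^1$ is the space of locally constant functions on $\bigsqcup_{ij}\Gamma_{ij}$, and $\dim Z^1=\sum_{ij}b_0(\Gamma_{ij})$ exactly. Combined with injectivity of $\delta^0$ (your $\operatorname{rank}\partial_1=4$), this gives both inequalities at once. It also shows that the natural cocycle attached to $C$ is non-zero on both edges carrying each label in $C$, which is why a one-edge-per-vertex ansatz cannot succeed.
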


\begin{proof}
Note that $\dim_{\mathbb Q(t)}H^1_{\overline\chi}(\overline X_{\Gamma}; \mathbb Q(t)) = \dim_{\mathbb Q(t)} Z^1-\dim_{\mathbb Q(t)} B^1$ where $Z^1 = \ker \delta^1$, and $B^1 = \im \delta^0$. We first analyze the cocycles $Z^1$. 

Since $\overline X_{\Gamma}$ has $2|V|$ edges, the space of $1$-cochains $C^1_{\overline \chi}(\overline X_{\Gamma}; \mathbb Q(t))\simeq \mathbb Q(t)^{2|V(\Gamma)|}$, and the natural coordinates of $\phi\in C^1_{\overline \chi}(\overline X_{\Gamma}; \mathbb Q(t))$ are $a_{\ell,j} = \phi(e_{\ell,j}), b_{i,r} = \phi(e_{i,r})$ for $i,j\in\{0,1\}$, $\ell\in L$, $r\in R$.

A $1$-cochain $\phi$ is a cocycle if 
$$0 = \delta^1\phi (s_{\ell r}) = a_{\ell,0} + t^{\sigma_\ell} b_{1,r} - t^{\sigma_r} a_{\ell,1} - b_{0,r},$$
or equivalently $a_{\ell,0}  - t^{\sigma_r} a_{\ell,1} = b_{0,r} - t^{\sigma_\ell} b_{1,r}$. Consider a change of coordinates on $C^1_{\overline \chi}(\overline X_{\Gamma}; \mathbb Q(t))$:
$$\alpha_{\ell,0} = a_{\ell,0} - ta_{\ell,1}, \qquad \alpha_{\ell,1} = a_{\ell,0} - t^{-1}a_{\ell,1}, \qquad \beta_{0,r} = b_{0,r} - tb_{1,r}, \qquad \beta_{1,r} = b_{0,r} - t^{-1}b_{1,r}.$$
This is indeed a change of coordinates, since the matrix of the map $(a_{\ell,0}, a_{\ell,1})\mapsto (\alpha_{\ell,0}, \alpha_{\ell,1})$ is 
$$
\begin{bmatrix}
    1 & -t\\ 1 & -t^{-1} 
\end{bmatrix}
$$
which is invertible since its determinant is $t-t^{-1}\neq 0$(and similarly for the $b$-coordinates). In the new coordinates the condition for being a $1$-cocycle becomes 
\begin{align}\label{eq: cocycle condition}
    \alpha_{\ell, j} = \beta_{i,r} \text{ for } (\ell, r)\in E(\Gamma) \text{ where } \ell \in L_i \text{ and } r\in R_j.
\end{align}
Consider $\sqcup \mathcal S = S_{00}\sqcup S_{10}\sqcup S_{01} \sqcup S_{11}$ (which has cardinality $2|V(\Gamma)|$), and define a bijection between the coordinates $\alpha_{\ell, j}, \beta_{i,r}$ and vertices of $\sqcup \mathcal S$ as follows:
map $\alpha_{\ell,j}$ to $\ell \in S_{ij}$ if $\ell \in L_i$, and map $\beta_{i, r}$ to $r\in S_{ij}$ if $r\in R_{j}$. This yields a decomposition 
$$
C^1_{\overline \chi}(\overline X_{\Gamma}; \mathbb Q(t)) = \bigoplus_{i,j\in \{0,1\}} C^0(\Gamma_{ij}; \mathbb Q(t)).
$$
Under this identification, by~\eqref{eq: cocycle condition} the $1$-cocycles are precisely locally constant functions on $\Gamma_{ij}$ (meaning constant on each connected component). 
In particular 
$$
\dim Z^1 = \sum_{i,j\in \{0,1\}} b_0(\Gamma_{ij})
$$
where $b_0(\Gamma_{ij})$ denotes the number of connected components of $\Gamma_{ij}$.

We now analyze the $0$-cochains. Let $c\in C^0_{\overline\chi}(\overline X_{\Gamma}; \mathbb Q(t))$ and let $c_{ij} = c(x_{ij})$ be its natural coordinates. Then $c$ is a $0$-cocycle if 
$$c_{0,j} = t^{\sigma_\ell} c_{1,j} \text{ for all }\ell\in L, \text{ and } c_{i,0} = t^{\sigma_r} c_{i,1} \text{ for all }r\in R.$$
Since we are assuming each $S_{ij}\neq\emptyset$, either both $L_0, L_1$ are non-empty, or both $R_0, R_1$ are nonempty, so we get $c_{0,j} = t c_{1,j} = t^{-1} c_{1,j}$ or $c_{i,0} = t c_{i,1} = t^{-1} c_{i,1}$. In either case, we must have $c\equiv 0$. Therefore $\delta^0$ is injective, and $\dim_{\mathbb Q(t)} B^1 = \dim_{\mathbb Q(t)} C^0_{\overline\chi}(\overline X_{\Gamma}; \mathbb Q(t)) = 4$.
Thus $\dim_{\mathbb Q(t)}H^1_{\overline\chi}(\overline X_{\Gamma}; \mathbb Q(t)) = \dim Z^1-\dim B^1 = \sum_{i,j\in \{0,1\}} b_0(\Gamma_{ij}) - 4 = \sum_{i,j\in \{0,1\}} \tilde b_0(\Gamma_{ij})$, as required.
\end{proof}

\begin{theorem}\label{thm: iff for finite genration}
    Let $\Gamma$ be a bipartite graph with the vertex set partition $V(\Gamma) = L\sqcup R$. Let $\mathcal M = \{L, R\}$ be the bipartite system of moves, and let $\mathcal S$ be an orbit of non-empty states under $\mathcal M$. Let $\chi: W_{\Gamma}'\to \mathbb Z$ be the associated homomorphism.
 
    Then $\ker \chi$ is finitely generated if and only if $\mathcal S$ is a legal orbit. 
\end{theorem}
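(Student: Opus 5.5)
The "if" direction is Theorem~\ref{thm: JNW}. If $\mathcal S$ is a legal orbit, the bipartite system together with $\mathcal S$ is a legal system. The associated diagonal map $f\colon X_\Gamma\to S^1$ then induces an algebraic fibering of $W_\Gamma'$, so $\ker\chi$ is finitely generated. Lemma~\ref{lem: nontrivial homomorphism} confirms that $\chi$ is nontrivial. Its hypothesis holds because each state $S_{ij}$ is non-empty, so its complement $S_{(1-i)(1-j)}$ is non-empty, i.e.\ every $S(x)$ is proper.

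For the ``only if'' direction I argue by contrapositive. Suppose $\mathcal S$ is not legal. By Lemma~\ref{lem:bipartite moves states}, since all $S_{ij}$ are non-empty, some $\Gamma_{ij}$ is disconnected. Then $\sum\tilde b_0(\Gamma_{ij})\ge 1$, and Lemma~\ref{lem: twisted homology bipartite} gives $H^1_{\overline\chi}(\overline X_\Gamma;\mathbb Q(t))\neq 0$. The plan is to transfer this nonvanishing to $W_\Gamma'$ and then apply Proposition~\ref{prop: cocycle criterion}.

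To make the transfer, recall from Remark~\ref{rem: 4vertex complex} that $W_\Gamma'\le K_\Gamma=\pi_1\overline X_\Gamma$ has finite index, and $\overline\chi$ restricts to $\chi$. First I replace $\overline\chi$ by its image, $d\mathbb Z$. The image is nonzero: by Lemma~\ref{lem: nontrivial homomorphism}, $\chi$ is nonzero, and $\chi$ is the restriction of $\overline\chi$. After dividing by $d$, $\overline\chi$ is surjective. If $\ker\chi$ were finitely generated, then $\ker\overline\chi$ would be finitely generated too. Indeed, $\ker\chi=\ker\overline\chi\cap W_\Gamma'$ has finite index in $\ker\overline\chi$, and a group containing a finitely generated subgroup of finite index is finitely generated.

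Proposition~\ref{prop: cocycle criterion}, applied to $X=\overline X_\Gamma$ with the surjection $\overline\chi/d$, would then give vanishing twisted homology with coefficient variable $t^{1/d}$. In other words, $H_1$ of the $\mathbb Q(s)$-complex with $s^d=t$ would vanish. The point to check is that rescaling does not break Lemma~\ref{lem: twisted homology bipartite}. The computation there works verbatim for boundary maps twisted by $s^{\pm d}$ in place of $t^{\pm1}$: the determinant becomes $s^d-s^{-d}\neq0$, and the $0$-cocycle argument goes through unchanged. So the dimension is still $\sum\tilde b_0(\Gamma_{ij})\ge1$, contradicting the vanishing.

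The step I expect to be the main obstacle is this bookkeeping: confirming that the chain complex written down before Lemma~\ref{lem: twisted homology bipartite} is exactly $C^{\overline\chi}_*$ for the correctly normalized surjection. This requires identifying that $\overline\chi$ sends the loop built from $e_{\ell,j}$ and its lift accordingly, with $\sigma_\ell$ signs matching the orientations. Alternatively, I could avoid divisibility issues by noting that any edge path of length $2$ in $\overline X_\Gamma$ has $\overline\chi$-value $0$ or $\pm2$. The index $d$ is therefore $1$ or $2$, and I would handle it by the substitution above.
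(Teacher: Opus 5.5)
Your route is the paper's. The backward direction is Theorem~\ref{thm: JNW}. For the forward direction you pass to $\overline X_\Gamma$, transfer finite generation from $\ker\chi$ to $\ker\overline\chi$ by finite index, normalize $\overline\chi$ to a surjection, apply Proposition~\ref{prop: cocycle criterion}, and contradict Lemma~\ref{lem: twisted homology bipartite}. Your nontriviality check and finite-index argument are fine. The gap is the normalization step you flagged yourself: your proposed fix, re-running the lemma with a substituted variable, does not address what actually needs proof.

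\emph{The value of $d$.} In fact $d=2$ always. Each edge of $\overline X_\Gamma$ changes the parity of $i+j$, so every loop has even length and hence even $\overline\chi$-value. On the other hand, the loop $e_{\ell,0}\overline e_{\ell',0}$ with $\ell\in L_0$, $\ell'\in L_1$ has value $2$, and so does $e_{0,r}\overline e_{0,r'}$ with $r\in R_0$, $r'\in R_1$. One of these pairs exists because all states are non-empty.

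\emph{The missing link between the two complexes.} In the displayed complex, $t$ records height: each edge contributes $t^{\pm1}$. So the deck generator of the cover for $\overline\chi/2$ corresponds to $t^{2}$, not to a root of $t$. Proposition~\ref{prop: cocycle criterion} therefore gives vanishing of $H_1$ of a complex defined over $\mathbb Q(t^2)$. The displayed complex contains odd powers of $t$, so it is not literally that complex. Your ``in other words'' is thus not a restatement, and checking that the lemma survives a change of exponents does not connect the two complexes.

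The paper's proof supplies this link:
\begin{enumerate}
\item Rescale the basis elements $x_{10},x_{01},e_{\ell,1},e_{1,r}$ by $t$. Then every boundary coefficient lies in $\mathbb Q[t^{\pm2}]$.
\item For suitable lifts of cells, this identifies the displayed complex with $C^{\overline\chi/2}_*(\overline X_\Gamma;\mathbb Q(t^2))\otimes_{\mathbb Q(t^2)}\mathbb Q(t)$.
\item Extending scalars along the field extension $\mathbb Q(t^2)\subseteq\mathbb Q(t)$ preserves vanishing of homology.
\end{enumerate}
Hence $H_1^{\overline\chi/2}=0$ forces $H_1$ of the displayed complex to vanish. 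This contradicts Lemma~\ref{lem: twisted homology bipartite} exactly as stated, with nothing to recompute.
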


\begin{proof}
    The backward direction is exactly Theorem~\ref{thm: JNW}.
    By Remark~\ref{rem: 4vertex complex} the homomorphism $\chi:W_\Gamma'\to \mathbb Z$ extends to a homomorphism $\overline \chi: \pi_1\overline X_{\Gamma}\to \mathbb Z$. The image of $\overline \chi$ is $2\mathbb Z$, since every loop in $\overline X_\Gamma$ has even length, and $\overline\chi(\ell\ell')=\pm2$ for
$\ell\in L_0$, $\ell'\in L_1$ (or similarly for $r\in R_0$, $r'\in R_1$). 
 Since $[\pi_1 \overline X_{\Gamma}: W_{\Gamma'}]<\infty$, the finite generation of $\ker \chi$ and $\ker\overline \chi$ are equivalent. Suppose $\ker \chi$ is finitely generated.

By Proposition~\ref{prop: cocycle criterion} applied to the surjection $\overline\chi/2$,
with $t^2$ corresponding to $2\in 2\mathbb Z = \im(\overline \chi)$, we get
$H_1^{\overline\chi/2}(\overline X_\Gamma;\mathbb Q(t^2))=0$. With respect to suitably rescaled
lifts of the cells, $C^{\overline\chi/2}_*(\overline X_\Gamma;\mathbb Q(t^2))\otimes_{\mathbb Q(t^2)}\mathbb Q(t)$
is the complex $C^{\overline\chi}_*(\overline X_\Gamma;\mathbb Q(t))$. Since
$\mathbb Q(t)\supseteq\mathbb Q(t^2)$ is a field extension, we get
$$H_1^{\overline\chi}(\overline X_\Gamma;\mathbb Q(t)) = 0.$$

    By Lemma~\ref{lem: twisted homology bipartite}, that means $\sum_{i,j\in \{0,1\}} \tilde b_0(\Gamma_{ij}) = 0$. Thus $\tilde b_0(\Gamma_{ij}) = 0$ for each $i,j\in \{0,1\}$, i.e.\ each graph $\Gamma_{ij}$ is connected. By Lemma~\ref{lem:bipartite moves states} $\mathcal S$ is a legal orbit of states under $\mathcal M$.
\end{proof}

Theorem~\ref{thm: iff for finite genration} together with Corollary~\ref{cor: problematic vertex in Lambda} and Theorem~\ref{thm: no legal system operation} yield the following.
\begin{corollary} Let $\Gamma = \Lambda(n,k)$ for $k\geq 3$ and $n\geq 2k-1$. Then any homomorphism $\chi: W_{\Gamma}'\to \mathbb Z$ induced by the bipartite system of moves and an orbit of non-empty states has infinitely generated kernel. 

Similarly, if $n> 2k-1$ and $\Gamma$ is any graph obtained from $\Lambda(n,k)$ by a sequence of vertex link doubling, simultaneous vertex link multiplying, or edge quadrupling, then any homomorphism $\chi: W_{\Gamma}'\to \mathbb Z$ induced by the bipartite system of moves and an orbit of non-empty states has infinitely generated kernel. 
\end{corollary}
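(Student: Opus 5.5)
The corollary follows by combining Theorem~\ref{thm: iff for finite genration} with the two non-existence results; the only work is to check that the hypotheses line up in each case.

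\emph{First statement.} Take $\Gamma=\Lambda(n,k)$ with $k\geq 3$ and $n\geq 2k-1$. Fix an orbit $\mathcal S$ of non-empty states under the bipartite moves $\mathcal M=\{L,R\}$, with associated $\chi$. Argue by contradiction and suppose $\ker\chi$ is finitely generated. Theorem~\ref{thm: iff for finite genration} then says $\mathcal S$ is a legal orbit. Hence $\mathcal M$ is a legal system on $\Lambda(n,k)$, which contradicts Corollary~\ref{cor: problematic vertex in Lambda}. That corollary rules out every system of moves, in particular the bipartite one. Non-emptiness of the states is used twice: it is a hypothesis of Theorem~\ref{thm: iff for finite genration}, and by Lemma~\ref{lem: nontrivial homomorphism} it makes $\chi$ non-trivial, so the statement is not vacuous.

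\emph{Second statement.} Let $n>2k-1$, i.e.\ $n\geq 2k$, and let $\Gamma$ be obtained from $\Lambda(n,k)$ by a finite sequence of the three graph operations. By repeated use of Proposition~\ref{prop: graph operation bipartite}, $\Gamma$ is bipartite, with partition $V(\Gamma)=\pi^{-1}(L)\sqcup\pi^{-1}(R)$ pulled back along the composite of the natural maps. So the bipartite system of moves on $\Gamma$ makes sense, and Theorem~\ref{thm: iff for finite genration} applies to $\Gamma$ with this partition.

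Again suppose $\ker\chi$ is finitely generated for some orbit of non-empty states. Then the orbit is legal, so $\Gamma$ admits a legal system. This contradicts Theorem~\ref{thm: no legal system operation}.

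One hypothesis needs checking: Theorem~\ref{thm: no legal system operation} goes through Lemma~\ref{lem: problematic vertex obstruction}, which requires every vertex to have at least two non-neighbours other than itself. This holds in a bipartite graph whose sides each have at least three vertices. The sides of $\Lambda(n,k)$ have at least three vertices, and the operations only delete the doubled vertex (or edge endpoints) while adding copies of the rest, so the side sizes do not drop below three. This is already implicit in that theorem's proof. Since the result only invokes that theorem, the argument is complete.

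The main delicate point is that the partition used on $\Gamma$ must be the one pulled back from $\Lambda(n,k)$. This is what makes "bipartite system of moves" on $\Gamma$ match both the system used in Theorem~\ref{thm: iff for finite genration} and the setting of dispersed problematic $R$-vertices. For a connected $\Gamma$ the bipartition is unique anyway, so there is no ambiguity.
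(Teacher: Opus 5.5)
Your proposal is correct and is exactly the paper's argument: the corollary comes from combining the forward direction of Theorem~\ref{thm: iff for finite genration} with Corollary~\ref{cor: problematic vertex in Lambda} (for $n\geq 2k-1$) and Theorem~\ref{thm: no legal system operation} (for $n\geq 2k$). Your extra care about which bipartition of $\Gamma$ is used is harmless but not needed: both non-existence results rule out legal orbits for every system of moves, so the bipartite system is excluded for whichever bipartition you choose.
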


\begin{remark}\label{rem: more general maps}
    There exist other diagonal maps on $X_{\Gamma}$ (and more generally on $X_{\Gamma'}$ for a graph $\Gamma'$ whose right-angled Coxeter group $W_{\Gamma'}$ is commensurable with $W_{\Gamma}$) than the diagonal maps induced by bipartite system of moves. Even more generally, there also exist homomorphisms $W_{\Gamma}'\to \mathbb Z$ which are not induced by diagonal maps on $X_{\Gamma}$. We do not make claims about any such homomorphisms.
\end{remark}

We have not determined if $W_{\Lambda(n,k)}$ with $k\geq 3$, $n\geq 2k$ virtually algebraically fibers, and our strategy might be insufficient to resolve it.
\begin{question}
    Does there exist a finite index subgroup $H$ of $W_{\Lambda(n,k)}$ generated by reflections such that its defining graph as a right-angled Coxeter group admits a legal system with the bipartite system of moves? 
\end{question}
In view of Remark~\ref{rem: more general maps} the following questions also remain open.
\begin{question}
    Does $X_{\Lambda(n,k)}$ admit a diagonal map inducing an algebraic fibering? More generally, does the commutator subgroup of $W_{\Lambda(n,k)}$ algebraically fibers?
\end{question}

\subsection{Failure of Theorem~\ref{thm: iff for finite genration} for more general systems of moves}
The following example illustrates that the condition in Theorem~\ref{thm: JNW} is sufficient but not necessary for more general systems of moves, even in bipartite graphs. 

\begin{example}\label{example: non legal, finite generation}
    Let $\Gamma$ be the complete bipartite graph $K_{2,3}$ with $L = \{a_1, a_2\}$, $R = \{b_1, b_2, b_3\}$.
    Let $$M_{a_1} = L, \qquad M_{a_2}= \{a_2\}, \qquad M_{b_i} = R \text{ for }i=1,2,3.$$
    Consider the following orbit of states under $\mathcal M$:
    $$\mathcal S= \{\{b_1\}, \{b_2,b_3\}, \{a_1, b_1\}, \{a_1, b_2,b_3\}, \{a_2, b_1\}, \{a_2, b_2,b_3\}, \{a_1, a_2, b_1\}, \{a_1, a_2,b_2,b_3\}\}.$$ 
    This is not a legal orbit, as the subgraph spanned by $\{b_2, b_3\}$ is not connected. 
    We now consider a diagonal map $f:X_{\Gamma}\to S^1$ as described above associated with this system, and assume $S(x_0) = \{b_1\}$ for some $x_0\in X_{\Gamma}^0$. The induced homomorphism $\chi = f_*:W_{\Gamma}'\to \mathbb Z$ is non-zero by Lemma~\ref{lem: nontrivial homomorphism}.
    Since $\Gamma$ is a join of $\{a_1, a_2\}$ and $\{b_1, b_2, b_3\}$, $W_{\Gamma} = W_{\{a_1, a_2\}}\times W_{\{b_1, b_2, b_3\}}$, and
    $$W_{\Gamma}' = W_{\{a_1, a_2\}}'\times W_{\{b_1, b_2, b_3\}}' = \mathbb Z \times F$$ where $F$ is a free group of finite rank. The element $(a_1a_2)^2$ is a  generator of $W_{\{a_1, a_2\}}' = \mathbb Z$, and it is represented by a path 
    $x_0 \xrightarrow{a_1}x_1 \xrightarrow{a_2} x_2 \xrightarrow{a_1} x_3 \xrightarrow{a_2} x_0$ in $X_{\Gamma}$. Since $a_1\notin S(x_0) = \{b_1\}$, $a_2\in S(x_1) = S(x_0)\cdot M_{a_1} = \{a_1, a_2, b_1\}$, $a_1\in S(x_2) = S(x_1)\cdot M_{a_2} = \{a_1, b_1\}$, and $a_2 \in S(x_3) = S(x_2)\cdot M_{a_1} = \{a_2, b_1\}$, we have 
    $$\chi((a_1a_2)^2) = -1 + 1 + 1 + 1 = 2.$$
    By Lemma~\ref{lem: kernels from Z times H} (below), $\ker \chi$ projects isomorphically onto a finite index subgroup of $W_{\{b_1, b_2, b_3\}}'\simeq F$, and therefore it is finitely generated. 
\end{example}

\begin{lemma}\label{lem: kernels from Z times H}
    Let $G = \langle t\rangle \times H$ where $\langle t\rangle \simeq \mathbb Z$, and let $\chi:G\to \mathbb Z$. If $\chi(t)\neq 0$, then $\ker\chi$ projects isomorphically onto a finite index subgroup in $H$.
\end{lemma}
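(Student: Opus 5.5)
The plan is to write $\ker\chi$ as a graph over $H$. Let $p\colon G\to H$ be the projection. Write $\chi(t)=d\neq 0$, and let $\psi=\chi|_{H}\colon H\to\mathbb Z$, so that $\chi(t^a h)=ad+\psi(h)$.

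First I will show that $p|_{\ker\chi}$ is injective. Suppose $t^a h$ and $t^b h$ are both in $\ker\chi$. Then $ad+\psi(h)=0=bd+\psi(h)$. Hence $(a-b)d=0$, and since $d\neq 0$ we get $a=b$. So distinct elements of $\ker\chi$ have distinct projections, and $p$ restricts to an isomorphism of $\ker\chi$ onto its image $p(\ker\chi)$.

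Next I will identify the image:
$$p(\ker\chi)=\{h\in H : \psi(h)\in d\mathbb Z\}=\psi^{-1}(d\mathbb Z).$$
For one inclusion, if $\psi(h)=-ad$ then $t^a h\in\ker\chi$, so $h$ lies in the image. For the other, if $t^a h\in\ker\chi$ then $\psi(h)=-ad\in d\mathbb Z$. Thus the image is the preimage under $\psi$ of the subgroup $d\mathbb Z\leq\mathbb Z$.

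Finally, the index bound. The composite $H\xrightarrow{\psi}\mathbb Z\to\mathbb Z/d\mathbb Z$ has kernel $\psi^{-1}(d\mathbb Z)$, so this subgroup has index at most $|d|$ in $H$, which is finite.

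There is no real obstacle here. The only point needing care is that injectivity uses $d\neq0$, which is exactly the hypothesis $\chi(t)\neq 0$.
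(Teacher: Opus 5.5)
Your proposal is correct and follows the paper's proof essentially step for step. You show the projection is injective on $\ker\chi$ using $\chi(t)\neq 0$, identify the image as $\chi|_H^{-1}(d\mathbb Z)$, and bound the index by $|d|$ via the induced map to $\mathbb Z/d\mathbb Z$.
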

\begin{proof}
    Let $n = \chi(t)\in \mathbb Z$. View $H$ as a subgroup of $G$ via ${e}\times H$.
    Consider a projection map $\pi:\ker \chi \to H$. The map $\pi$ is injective, since $\chi(t^k, 1) = kn\neq 0$ when $k\neq 0$. We need to show that $[H:\pi(\ker \chi)]<\infty$. We claim that $\im(\pi) = \chi_{|H}^{-1}(n\mathbb Z)$. Indeed, if $(t^k, h)\in \ker \chi$, then $\chi_{|H}(h)=\chi (1, h) = \chi(t^{-k},1) = -nk\in n\mathbb Z$. Conversely, if $\chi_{|H}(h)= nk$, then $\chi(t^{-k}, h) = 0$. Thus $\chi_{|H}: H\to \mathbb Z$ induces an injective map $H/\chi_{|H}^{-1}(n\mathbb Z)\to \mathbb Z/n\mathbb Z$, so $[H:\pi(\ker \chi)]\leq |n|$. 
\end{proof}

\bibliographystyle{alpha}
\bibliography{references}

\end{document}